\newcommand{\be}{\begin{equation}}
\newcommand{\ee}{\end{equation}}
\newcommand{\cn}{{\rm \,cn}}
\newcommand{\sn}{{\rm \,sn}}
\newcommand{\dn}{{\rm \,dn}}
\newcommand{\Ker}{{\rm \,Ker}}
\newcommand{\K}{{\rm \,K}}
\newcommand{\E}{{\rm \,E}}
\def\smath#1{\text{\scalebox{.8}{$#1$}}}
\def\sfrac#1#2{\smath{\frac{#1}{#2}}}           
\numberwithin{equation}{section}
\numberwithin{figure}{section}
\newtheorem{theorem}{Theorem}[section]
\newtheorem{proposition}[theorem]{Proposition}
\newtheorem{remark}[theorem]{Remark}
\newtheorem{lemma}[theorem]{Lemma}
\newtheorem{definition}[theorem]{Definition}
\begin{document}
\vglue-1cm \hskip1cm
\title[abcd Boussinesq System]{Orbital stability of periodic traveling waves for the ``abcd" Boussinesq systems}

\begin{center}

\subjclass[2020]{35B35, 35G35, 35Q35 }

\keywords{Boussinesq ``abcd'' model, existence of periodic traveling waves, spectral analysis, orbital stability.}

\author[G.E.B. Moraes]{Gabriel E. Bittencourt Moraes}

\address{Gabriel E. Bittencourt Moraes - State University of
	Maring\'a, Maring\'a, PR, Brazil.}
\email{pg54546@uem.br}

\author[G. de Loreno]{Guilherme de Loreno}

\address{Guilherme de Loreno - State University of
	Maring\'a, Maring\'a, PR, Brazil.}
\email{pg54136@uem.br}


\author[F. Natali]{F\'abio Natali}

\address{F\'abio Natali - Department of Mathematics, State University of
	Maring\'a, Maring\'a, PR, Brazil. }
\email{fmanatali@uem.br }

\maketitle

\vspace{1mm}

\end{center}

\begin{abstract}
New results concerning the orbital stability of
periodic traveling wave solutions for the ``abcd" Boussinesq  model will be shown in this manuscript. For the existence of solutions, we use basic tools of ordinary differential equations to show that the corresponding periodic wave depends on the Jacobi elliptic function of cnoidal type. The spectral analysis for the associated linearized operator is determined by using some tools concerning the Floquet theory. The orbital stability is then established by
applying the abstract results \cite{andrade} and \cite{GrillakisShatahStraussI} which give us sufficient conditions to the orbital stability for a general class of evolution equations.
\end{abstract}

\section{Introduction}\label{Introduction}

In what follows, consider the well known ``abcd'' Boussinesq  system
\begin{equation}\label{abcd}\left\{\begin{array}{llll}
\eta_t+u_x+(\eta u)_x+a u_{xxx}-b\eta_{xxt}=0 \\
u_t+\eta_x+uu_x+c\eta_{xxx}-du_{xxt} = 0,
\end{array}\right.
\end{equation}
where $\eta, u:\mathbb{R}\times (0, +\infty) \longrightarrow\mathbb{R}$ are real-valued functions which are $L$-periodic at the spatial variable and $a,b,c,d \in \mathbb{R}$ are suitable real parameters. The ``abcd'' model $(\ref{abcd})$ was introduced by Bona, Chen and Saut  \cite{BonaChenSautI} and \cite{BonaChenSautII} to describe the motion of small-amplitude long waves on the surface of an ideal fluid under the force of gravity. The quantity $\eta(x,t)$ is the vertical deviation of the free surface from its rest position while $u(x,t)$ represents the horizontal velocity field at time $t>0$. Initially, the constants $a,b,c,d$ must satisfy only the following relation 
\begin{equation}\label{relation-abcd-tau}
	a + b + c + d = \frac{1}{3} - \tau,
\end{equation} 
where $\tau \geq 0 $ is the surface tension of fluid. As reported in \cite{BonaChenSautI}, when the surface tension $\tau$ is zero, parameters $a,b,c,d $ must satisfy the relations
\begin{align}\label{relationabcd1}
& a+b=\frac{1}{2}\left( \theta^2-\frac{1}{3}\right),  \nonumber \\
& c+d=\frac{1}{2}(1-\theta^2) \geq 0,  \\
& a+b+c+d=\frac{1}{3} \nonumber , 
\end{align}
where $\theta \in [0,1]$. In addition, $a,b,c,d$ can be rewritten in the form
\begin{equation}\label{relationabcd2}
\begin{array}{lcl} \displaystyle{a=\frac{1}{2}\left(\theta^2-\frac{1}{3}\right)\nu,} & & \displaystyle{b=\frac{1}{2}\left(\theta^2-\frac{1}{3}\right)(1-\nu)},\\
\displaystyle{c=\frac{1}{2}(1-\theta^2)\,\mu}, & &  \displaystyle{d=\frac{1}{2}(1-\theta^2)\,(1-\mu)},
\end{array}
\end{equation}
where $\nu, \mu$ are suitable real parameters in the sense that \eqref{relationabcd2} implies \eqref{relationabcd1}. In \cite{BonaChenSautI}, the authors investigated the parameters $a,b,c,d$ in details.  Indeed, by choosing different real values for $\nu, \mu$ and $\theta \in [0,1]$, it is possible to deduce some classical systems in the current literature as the classical Boussinesq system, Kaup system, Bona-Smith system, coupled Benjamin-Bona-Mahony system, coupled Korteweg-de Vries system, coupled mixed Korteweg-de Vries-Benjamin-Bona-Mahony systems, and many others. Additional aspects concerning all mentioned models above can be seen in \cite{Kaup}, \cite{Sachs}, and \cite{Winther}.

Let $L>0$ be fixed and consider $(\eta, u):=\begin{pmatrix}
	\eta \\ 
	u
\end{pmatrix} \in H^1_{per} \times H^1_{per}$. By considering $b=d$ in \eqref{abcd}, one has the conserved quantities  $E,F:H^1_{per}  \times H^1_{per} \rightarrow \mathbb{R}$ given by
\begin{equation}\label{E}
	E(\eta,u)=\int_{0}^{L} -c\eta_x^2-au_x^2+\eta^2+(1+\eta)u^2  \; dx
\end{equation}
and
\begin{equation}\label{F}
	F(\eta,u)=\int_{0}^{L} \eta u+b \, \eta_x u_x  \; dx.
\end{equation}
Moreover, $(\ref{abcd})$ also admits the following conserved quantities
\begin{equation}\label{M1M2}
M_1(\eta, u)= \int_0^L \eta \; dx \quad \text{and} \quad M_2(\eta, u)= \int_0^L u \; dx.
\end{equation} 
\indent Another important fact is that \eqref{abcd} can be written as an abstract Hamiltonian system
\begin{equation}\label{hamilt}
\frac{d}{dt}\begin{pmatrix}
\eta \\ 
u
\end{pmatrix} = J E'\begin{pmatrix}
\eta \\ 
u
\end{pmatrix},
\end{equation}
where
\begin{equation}\label{J}
J=\begin{pmatrix}
0 & (1-b\partial_x^2)^{-1}\partial_x \\ 
 (1-b\partial_x^2)^{-1}\partial_x & 0
\end{pmatrix}
\end{equation}
is a skew-symmetric operator defined in $L_{per}^2\times L_{per}^2$ with domain $ H^1_{per} \times H^1_{per}$.\\
\indent Let $\omega \in \mathbb{R} $ be fixed. Consider the Lyapunov functional depending on the conserved quantities $E$, $F$, $M_1$ and $M_2$ defined as
\begin{equation}\label{G}
	G(\eta, u)=E(\eta, u)-\omega F(\eta, u)-A_2 M_1(\eta, u)-A_1 M_2(\eta,u),
\end{equation}
where $A_1$ and $A_2$ are the constants of integration (see $(\ref{SystemEDO2})$). As far as we know, one of the requirements to determine the orbital stability is to obtain a pair $(\varphi,\psi)$ of special solutions called \textit{traveling waves} for $(\ref{abcd})$ satisfying $G'(\varphi,\psi)=0$, that is, $(\varphi,\psi)$ needs to be a critical point of $G$ (see \cite{bona2}). In addition, the linearized operator around the pair $(\varphi,\psi)$ is given by 
\begin{equation}\label{matrixop}
	\mathcal{L}= G''(\varphi,\psi)=\begin{pmatrix}
		1+c\partial_x^2 & b\omega \partial_x^2+\psi-\omega \\
		b \omega \partial_x^2+\psi-\omega &1+a\partial_x^2+\varphi
	\end{pmatrix}.
\end{equation}
The operator $\mathcal{L}$ is a self-adjoint, deﬁned in $L^2_{per} \times L^2_{per}$ with dense domain $H^{2}_{per} \times H^{2}_{per}$. As we will see later on, operator $\mathcal{L}$ in $(\ref{matrixop})$ plays an important role in our study.\\
\indent Next, we present some traveling waves associated to the system $(\ref{abcd})$. In fact, the existence of suitable conserved quantities for the model $(\ref{abcd})$ suggests in the most of the cases, the existence of traveling wave solutions of the form,
\begin{equation}\label{standingwave}
(\eta(x,t), u(x,t))=\big(\varphi(x-\omega t), \psi(x-\omega t)\big),
\end{equation}
where $\varphi, \psi: \mathbb{R} \longrightarrow \mathbb{R}$ are smooth $L$-periodic functions and $\omega \in \mathbb{R}$ represents the wave speed. Substituting \eqref{standingwave} into \eqref{abcd}, we obtain the following system of ordinary differential equation 
\begin{equation}\label{SystemEDO1}
\begin{cases}
-\omega \varphi'+\psi'+(\varphi \psi)'+a \psi'''+b\omega \varphi'''=0 \\
-\omega \psi'+\varphi'+\frac{1}{2}(\psi^2)'+c\varphi'''+d\omega \psi'''=0,
\end{cases}
\end{equation}
or equivalently, after an integration in both equations
\begin{equation}\label{SystemEDO2}
\begin{cases}
-\omega \varphi+\psi+\varphi \psi+a \psi''+b\omega \varphi''-A_1=0 \\
-\omega \psi+\varphi+\frac{1}{2}(\psi^2)+c\varphi''+d\omega \psi''-A_2=0,
\end{cases}
\end{equation}
where $A_1, A_2$ are real constants of integration to be chosen later.\\
\indent Our inspiration to show the existence of a pair of solution $(\varphi, \psi)$ for the system \eqref{SystemEDO2} was the seminal paper of Chen, Chen and Nguyen \cite{ChenChenNguyen} (see also \cite{MutoChristiansenLomdahl}) where the authors showed the existence of periodic traveling wave solutions $(\varphi,\psi)$ of cnoidal type $(\varphi, \psi)$ for the system \eqref{abcd} for the case $a=c=0$ and $b=d=\frac{1}{6}$. They put forwarded that $\varphi$ and $\psi$ are given explicitly by
\begin{equation}\label{LIcn} \varphi(\xi) = \gamma_0 + \gamma_2 {\rm cn}^2(\lambda \xi, k) + \gamma_4 {\rm cn}^4(\lambda \xi, k) \quad \text{ and } \quad \psi(\xi) = \zeta_0 + \zeta_2 {\rm cn}^2(\lambda \xi, k),
\end{equation}
for some real constants $\gamma_i$ and $\zeta_j$, $i=0,2,4$ and $j=0,2$. Parameter $k\in(0,1)$ in $(\ref{LIcn})$ is called modulus of the elliptic function. Solutions in $(\ref{LIcn})$ are non-multiple each other and this fact brings additional difficulties in order to study the behaviour of the non-positive spectrum of $\mathcal{L}$ defined in $(\ref{matrixop})$. To overcome this difficulty concerning the behaviour of the non-positive spectrum of $\mathcal{L}$, we are going to assume that $(\varphi, \psi) = (\varphi, B \varphi)$, where $B$ is a convenient constant. Consider $a=c<0$ and $b=d>0$. Substituting the anstaz $(\varphi, \psi)=(\varphi, B \varphi)$ into the system \eqref{SystemEDO2} with $\varphi$ given by
\begin{equation}\label{solcn1}
\varphi(x)=b_0+b_2 \, {\rm cn}^2\left( \frac{2{\rm K}(k)}{L} x, k \right),
\end{equation}
we can see, by a convenient choose of parameters $b_0$, $b_2$, $B$, $A_1$, and $A_2$ that $(\varphi, B\varphi)$ is a solution of \eqref{SystemEDO2}. In fact, we need to consider two basic cases in our study. The first one is to assume $a+b=0$ to obtain the existence of a smooth surface 
\begin{equation}\label{smooth1}
(b,\omega,k) \in (0,+\infty)\times(-1,1)\times \Big(0,\tfrac{1}{\sqrt{2}} \Big)\subset \mathbb{R}^3\mapsto (\varphi,  \psi)=(\varphi_{b,\omega,k}, B \varphi_{b,\omega,k}) \in H^2_{per} \times H^2_{per} 
\end{equation} 
of periodic negative solutions, where $\varphi$ is given by $(\ref{solcn1})$. In that case, parameters $b_0,b_2$, $A_1$, $A_2$ and $B$ are given explicitly by
\begin{equation}\label{b0case11}
	b_0=\frac{-64\,(B\omega - 1)^2 \,b\, \left(k^2 - \frac{1}{2}\right){\rm K}(k)^2 + 2\left( \left(\omega^2 + \frac{1}{2}\right)B - \frac{3}{2}{\omega}\right) B L^2}{2 \, B^2 \, L^2 (B\omega - 1)}
\end{equation}
and
\begin{equation}\label{b2case11}
	b_2=\frac{48\,b \, k^2 \, (B\omega -1) {\rm K}(k)^2}{B^2 \, L^2},
\end{equation}
where $k \in \big(0, \sfrac{1}{\sqrt{2}}\big)$ and ${\rm K}(k)$ indicates the complete integral elliptic of the first kind. The wave speed $\omega$ is a free parameter belonging to the open interval $(-1,1)$ and parameter $B$ can be expressed into two different ways, namely,
\begin{equation}\label{Bcase11}
	B=-\frac{\omega}{2}+\frac{1}{2}\sqrt{\omega^2+8}>0 \quad \text{or} \quad B=-\frac{\omega}{2}-\frac{1}{2}\sqrt{\omega^2+8}<0.
\end{equation}
In addition, the integration constants $A_1, A_2$ are defined as
\begin{equation}\label{A1case11}
	A_1=-{\frac { \left( B-\omega \right) ^{2} ( -256\,{b}^{2} \, r(k) \,  {\rm K}(k)^{4}+{L}^{4} )}{4\,B{L}^{4}}}
\end{equation}
and
\begin{equation}\label{A2case11}
	A_2={\frac {1024\,{b}^{2} r(k)  \left( B\omega
			-1 \right) ^{4}  {\rm K} (k)^{4}-
			4\left( {B}^{3} s(\omega)+ \left( -3\,{\omega}
			^{3}- \tfrac{3\omega}{2}\right) {B}^{2}+ \left( \tfrac{11}{4}{\omega}^{2}+1 \right) B-\omega
			\right)B{L}^{4}}{8\,{L}^{4}{B}^{2} \left( B\omega-1 \right)^{2}}},
\end{equation}
where $r(k)= {k}^{4}-{k}^{2}+1$ and $s(\omega)= {\omega}^{4}+{\omega}^{2}-\tfrac{1}{4}$.

 The second one is to assume  $a+b = \frac{1}{6}$. We obtain an explicit pair of solutions $(\varphi, B\varphi)$ for the same system with $\omega=0$. In this case, we  initially determine a smooth surface
\begin{equation}\label{smooth2}
(a,k) \in \left(-\infty,0\right)\times (0,1)   \subset \mathbb{R}^2\longmapsto (\varphi,  \psi)=(\varphi_{a,k}, B \varphi_{a,k}) \in H^2_{per} \times H^2_{per} 
\end{equation}
of periodic solutions such that $(\varphi,B\varphi)$ satisfies \eqref{SystemEDO2}. Parameters $b_0$, $b_2$, $B$, $A_1$, and $A_2$ must satisfy
\begin{equation}\label{b0-case2-w=01}
	b_0 = - \frac{1}{2} \frac{(32 k^2 - 16)\, a \, {\rm K}(k)^2 + L^2}{L^2}
\end{equation}
and 
\begin{equation}\label{b2-case2-w=01}
	b_2 = \frac{24 {\rm K}(k)^2 \, a \, k^2}{L^2}.
\end{equation}

In this case, $B$ can be expressed into two different simple ways, namely, $B = \pm \sqrt{2}$. In addition, 
\begin{equation}\label{A1-case2-w=01}
	A_1 = \frac{1}{4} \frac{ \sqrt{2} \, (-256 \, {\rm K}(k)^4 \, a^2 \, r(k) + L^4)}{L^4}
\end{equation}
and
\begin{equation}
	A_2 = -\frac{1}{4} \frac{ (-256 \, {\rm K}(k)^4 \, a^2 \, r(k) + L^4)}{L^4}.
\end{equation}

The reason to consider the cases $b=d > 0$ and $a=c<0$ is two fold: the first one is to write $(\ref{abcd})$ as a Hamiltonian form given by $(\ref{hamilt})$ and the second reason is because the pair $(\varphi,B\varphi)$ can be expressed in terms of the cnoidal periodic solution as in $(\ref{solcn1})$. Depending on the choice $a+b=0$ or $a+b=\frac{1}{6}$, we can obtain the smooth surfaces $(\ref{smooth1})$ or $(\ref{smooth2})$, respectively. Both conditions concerning parameters $a,b,c,d$ were discussed by Hakkaev, Stanislavova and Stefanov \cite{HakkaevStanislavovaStefanov} to obtain  spectral stability results of solitary waves associated to the system \eqref{abcd}. More precisely, they consider traveling wave solutions $\varphi$ having a ${\rm sech}^2$ profile with $\psi=B \varphi$ and employed the arguments in \cite{KapitulaStefanov} to prove that the spectrum  of the corresponding linearized operator obtained by a suitable linearization around the pair $(\varphi,\psi)$ is located entirely on the imaginary axis.\\
\indent Concerning the standard Benjamin-Bona-Mahony equation (BBM henceforth),
\begin{equation}\label{BBM}
	u_t+u_x+uu_x-u_{xxt}=0,
	\end{equation}
and its analogous equation $(\ref{SystemEDO2})$ which determines traveling wave solutions of the form $u(x,t)=\psi(x-\omega t)$ given by
\begin{equation}\label{ode1}
	\omega\psi''-(\omega-1)\psi+\frac{1}{2}\psi^2-A=0,
	\end{equation}
precise results of orbital stability for cnoidal solutions associated to the equation $(\ref{ode1})$ with $A=0$ were determined in \cite{an-ban-scia1}. The authors proved that the associated linearized operator $\mathcal{L}$ has only one negative eigenvalue which is simple and zero is a simple eigenvalue whose associated eigenfunction is $\psi'$ by using the Fourier expansion of the explicit solutions of cnoidal type together with the main theorem in \cite{AnguloNatali2008}.
In addition, using the arguments in \cite{AnguloBonaScialom}, the explicit solutions were also used to calculate the positiveness of $\frac{d}{dc}\int_0^L(\psi'^2+\psi^2)dx$ in order to conclude the orbital stability.

\indent The well-posedness of the Cauchy problem for the system $(\ref{abcd})$ needs to be highlighted. In fact, most of contributions concerns the case where the system $(\ref{abcd})$ is considered in the whole real line instead of periodic boundary conditions. We first refer the reader the pioneers works by Bona, Chen and Saut \cite{BonaChenSautI} and  \cite{BonaChenSautII} where some sufficient conditions for the local well-posedness have been discussed in details (see also \cite{Angulo1999} for an additional reference). Global solutions in the periodic context have been determined in \cite{YangZhang} when the regularized term in $(\ref{abcd})$ is dropped, that is, when $b=d=0$. In some particular cases, the authors in \cite{BonaChenSautII} have been discussed the problem of global well-posedness with some restrictions on the initial data. It is also important to mention that most of theories of orbital stability for traveling waves can be applied if the associated Cauchy problem has a suitable global well-posedness result in the energy space, that is, the space where the energy functional is defined (in our case $E$ in $(\ref{E})$ is defined in $H_{per}^1\times H_{per}^1$). As examples, we can cite \cite{ANP}, \cite{bona2}, \cite{CristofaniPastor}, \cite{NataliPastorCristofani}, \cite{WeinsteinNLS}, and references therein. As far as we can see, our model does not have a suitable global well-posedness result in the energy space $H_{per}^1\times H_{per}^1$, except if additional conditions about the smallness of the initial data are assumed (see \cite{BonaChenSautII}). For our purposes, we are going to assume a suitable local well-posedness, but we believe that the desired result can be obtained by repeating similar arguments as in \cite{BonaChenSautI} and \cite{BonaChenSautII} since the approach is based upon standard fixed point arguments.\\
\indent The pioneer work by Grillakis, Shatah and Strauss \cite{GrillakisShatahStraussI} establishes orbital stability result by considering only a local solutions in time for the associated Cauchy problem in the energy space. Our results are based on this approach and one of its generalizations as in Andrade and Pastor \cite{andrade} where they consider the standard problem of minimization of the energy with a more general constraint. An important fact is that the authors in \cite{andrade} do not realize that their approach can be determined only assuming the existence of local solutions (global solutions have been assumed, in fact). The contradictory and sequence arguments as done in \cite{GrillakisShatahStraussI} avoid the assumption concerning the global well-posedness since it is possible to work only with a local time for the existence of solutions. Without further ado, we describe the main points to get the orbital stability. In fact, the main result in \cite{andrade} reads as follows. Suppose that:
\begin{itemize}
	\item [(H1)] there exists an $L$-periodic solution of \eqref{SystemEDO1}, say $\Psi = (\varphi,\psi) \in C^\infty_{per}([0,L]) \times C^{\infty}_{per}([0,L])$ with minimal period $L>0$. The self-adjoint operator $\mathcal{L}$ has only one negative eigenvalue, which is simple, and zero is a simple eigenvalue with associated eigenfunction \linebreak $\Psi' = (\varphi',\psi')$.
	
	\item [(H2)] There exists $\Phi \in X = H_{per}^1 \times H_{per}^1$ such that
	\begin{equation}
	I=	(\mathcal{L} \Phi, \Phi)_{L^2_{per} \times L^2_{per}} < 0 \quad   \text{ and }  \quad (\mathcal{L} \Phi, \Psi')_{L^2_{per} \times L^2_{per}} = 0,
	\end{equation}
	where $\mathcal{L} \Phi=Q'(\Psi) $ with $Q$ being a convenient conserved quantity $Q = Q(\eta, u)$.
\end{itemize}
Thus, $\Psi=(\varphi, \psi)$ is orbitally stable in $X$ by the periodic flow of \eqref{abcd}. 

\begin{remark}\label{rem123} An important fact to obtain the orbital stability is that a condition (H4) in \cite{andrade} needs to be satisfied regarding a relation among the conserved quantities of the model. However, we do not present here because it is no longer necessary. Indeed, as determined in \cite{andrade}, we do not parametrize our solutions in terms of the elliptic modulus $k\in(0,1)$ and we will show in Section $\ref{stability-section}$ that such condition can be easily verified.
	\end{remark}

The first condition in (H1) has been determined in $(\ref{smooth1})$ and $(\ref{smooth2})$. To obtain the second condition, we turn our attention to the spectral analysis of the operator $\mathcal{L}$ given by \eqref{matrixop}. Indeed, by \eqref{SystemEDO1} we see that $\mathcal{L}(\varphi', \psi')=(0,0)$, that is, $(\varphi', \psi') \in {\rm Ker}(\mathcal{L})$. The fact that solutions $\varphi$ and $\psi$ of $(\ref{SystemEDO2})$ are multiple each other can be used to obtain the existence of a similar transformation $T$ such that 
\begin{equation*}
\mathcal{L}=T^{-1} M T,
\end{equation*}
where $T=\begin{pmatrix}
	\tfrac{1}{\sqrt{2}} & \tfrac{1}{\sqrt{2}} \\ 
	-\tfrac{1}{\sqrt{2}} & \tfrac{1}{\sqrt{2}}
\end{pmatrix} $. The matrix operator $M$ is given by
\begin{equation*}
M=\begin{pmatrix}
-\partial_x^2(-a-b\omega)+(1-\omega)+\psi+\tfrac{\varphi}{2} & \tfrac{\varphi}{2} \\ 
\tfrac{\varphi}{2} & -\partial_x^2(-a+b\omega)+(1+\omega)-\psi+\tfrac{\varphi}{2}
\end{pmatrix}.
\end{equation*}
This fact allows us to obtain, in the case $a+b=0$, the following similar transformations
\begin{equation*}
	\mathcal{L} = (UST)^{-1} \left( 
	\begin{array}{cc}
		\mathcal{L}_1 & 0 \\
		0 & \mathcal{L}_2
	\end{array}
\right) (UST),
\end{equation*}
where $$U=\begin{pmatrix}
	\alpha
	& \beta
	\\ 
	\noalign{\medskip}{\frac {\alpha\, \left( 2\,B+\omega-3 \right) }{
			\sqrt {1-{\omega}^{2}}}}
	&  -{\frac {\beta\, \left( 2\,B+\omega+3 \right) }{
			\sqrt {1-{\omega}^{2}}}}
\end{pmatrix} \quad \mbox{and} \quad S = \left( \begin{array}{cc}
	\sqrt{b(1-\omega)} & 0 \\
	0 & \sqrt{b(1+\omega) }
\end{array}
\right), $$
with \begin{eqnarray*}
	 \alpha =  -\sqrt {b \left( 1+\omega \right) } \left( 2\,B+\omega+3 \right)  \left( B-1 \right)\quad 
\mbox{and} \quad
	\beta = -\sqrt {b \left( 1-\omega \right) } \left( 1+B \right)  \left( -2\,B-\omega+
	3 \right).
\end{eqnarray*}

 Matrix operators $\mathcal{L}_1$ and $\mathcal{L}_2$ are given by
\begin{equation}\label{L1L2case12}
	\mathcal{L}_1 = -\partial_x^2 + \frac{1}{b} + \ell_1 \varphi \quad \text{ and } \quad \mathcal{L}_2 = -\partial_x^2 + \frac{1}{b} + \ell_2 \varphi,
\end{equation}
where \begin{equation}\label{l1l20}
	\ell_1 = \frac{4 + 2\, B\,\omega}{2\, b \, (1-\omega^2)} \quad \text{ and } \quad \ell_2 = \frac{-2 + 2\, B\,\omega }{2\, b \, (1-\omega^2)}.
\end{equation}

If $a+b = \frac{1}{6}$, we have similar facts since $\mathcal{L}$ in $(\ref{matrixop})$ can be expressed by
\begin{equation*}
\mathcal{L}=(\mathcal{S}T)^{-1} \begin{pmatrix}
\mathcal{L}_3 & 0 \\ 
0 &  \mathcal{L}_4
\end{pmatrix}(\mathcal{S}T),
\end{equation*}
where $\mathcal{S}$ depends on the choice of the $B=\pm\sqrt{2}$. For instance, when $B<0$ we have $\mathcal{S}= \tfrac{1}{\sqrt{6}} \begin{pmatrix}
	\sqrt{3+2\sqrt{2}} & \sqrt{3-2\sqrt{2}} \\ 
	-\sqrt{3-2\sqrt{2}} & \sqrt{3+2\sqrt{2}}
\end{pmatrix}.$ The Hill operators $\mathcal{L}_3$ and $\mathcal{L}_4$ are given by
\begin{equation}\label{L1L2case21}
	\mathcal{L}_3=-a\partial_x^2+1+2\varphi \quad \text{and} \quad \mathcal{L}_4=-a\partial_x^2+1-\varphi.
\end{equation}
\indent Using Sylvester law of inertia, we can study the behaviour of the non-positive spectrum of $\mathcal{L}$ by studying only the non-positive spectrum of the Hill operators $\mathcal{L}_i$, $i\in \{1,2,3,4\}$, given by $(\ref{L1L2case12})$ and $(\ref{L1L2case21})$. To this end, we determine the quantity and multiplicity of non-positive eingenvalues for these Hill operators by combining some tools concerning the Floquet theory and some arguments contained in Natali and Neves \cite{NataliNeves2014} about the isoinertially of second order linear operators. In all cases, we are able to determine that the kernel of $\mathcal{L}$ is simple with ${\rm Ker}(\mathcal{L})=[(\varphi', \psi')]$ and the number of negative eigenvalues of $\mathcal{L}$ is one as requested in (H1).

 To prove (H2) we need to show the existence of $\Phi \in X$ such that $\mathcal{L}\Phi=Q'(\Psi)$, \linebreak $I = (\mathcal{L} \Phi, \Phi)_{L^2_{per} \times L^2_{per} }<0$ and $(\mathcal{L}\Phi,\Psi')_{L^2_{per}\times L^2_{per}}=0$ for both cases $a+b = 0$ and $a+b = \sfrac{1}{6}$. First, we consider the case $a+b =0$. By $(\ref{smooth1})$, one has the existence of a smooth surface of periodic waves $(\varphi,\psi)$ for the system $(\ref{SystemEDO2})$ depending on $(b,\omega,k)$, where $b>0$, $\omega \in (-1,1)$ and $k \in (0,\sfrac{1}{\sqrt{2 }})$. The smoothness in terms of $\omega$ enables us to consider $\Phi = (\partial_\omega \varphi , \partial_\omega \psi)$. Defining the conserved quantity $Q$ given by
 $
 	Q(\eta,u) = F(\eta,u) + (\partial_\omega A_2) M_1 (\eta,u) + (\partial_\omega A_1) M_2(\eta,u),
$
we see clearly that $\mathcal{L}\Phi=Q'(\Psi)$ and $(\mathcal{L}\Phi,\Psi')_{L^2_{per}\times L^2_{per}}=0$. To compute the quantity $I$, we need to perform hard calculations involving $b>0$, $\omega\in (-1,1)$ and $k\in(0,\sfrac{1}{\sqrt{2}}\big)$. To overcome this difficult, we employ some numerical computations using  \textit{software Maple} in order to obtain our desired result in the assumption (H2).\\
\indent We need to explain additional facts concerning the case $a+b = \frac{1}{6}$. As in \cite{HakkaevStanislavovaStefanov}, we do not have a curve with respect to the wave speed $\omega$ since we obtain the existence of cnoidal waves of the form $(\ref{cnoidalsolution})$ only for $\omega = 0$. Thus, we are forced to consider, in order to determine a  way to decide if $I$ is negative, that $A_1 = A_2 = 0$. This additional assumption enables us to get that constant $a$ in the system $(\ref{abcd})$ is given by
\begin{equation}\label{relationa-intro}
	a = -\frac{1}{16} \frac{L^2}{{\rm K}(k)^2 \sqrt{k^4-k^2+1}}.
\end{equation}
In this case, we obtain a smooth curve $\varphi=\varphi_k$ depending on $k \in (0,1)$ and since $a+b=\frac{1}{6}$, we need to consider the surface tension $\tau = 0$, so that constants $a,b,c,d \in \mathbb {R}$ must satisfy equalities in \eqref{relationabcd2}. 
To obtain assumption (H2), consider the conserved quantity $Q(\eta,u) = F(\eta,u)$. 
If $\varphi$ has depended smoothly on the wave speed $\omega$, the fact that $A_1=A_2=0$ would imply that the natural choice of $\Phi\in X$ such that $\mathcal{L}\Phi=Q'(\Psi)$ could be $\Phi=(\partial_{\omega}\varphi,\partial_{\omega}\psi)$. However, we only have the pair of solutions $(\varphi,\psi)$ for the equation $(\ref{SystemEDO2})$ with $\varphi$ given by $(\ref{cnoidalsolution})$ and $\psi=B\varphi$ when $\omega=0$. However, since ${\rm Ker}(\mathcal{L})=[(\varphi', \psi')]$ we obtain $Q'(\Psi) \in {\rm Ker}(\mathcal{L})^{\tiny \perp}={\rm R}(\mathcal{L})$, so that there exists $\Phi \in {\rm D}(\mathcal{L})$ such that $\mathcal{L}\Phi=Q'(\Psi)$ but we do not know an explicit expression for this new $\Phi$. To overcome this difficulty, we establish a convenient estimate for $I=(\mathcal{L}\Phi,\Phi)_{L_{per}^2\times L_{per}^2}$ to obtain that $I<0$. Some numerics and suitable estimates are needed.\\
\indent Our results can be summarized in the next theorem.

\begin{theorem}\label{mainT} Let $L>0$ be fixed.\\
	{\rm (i)} Let $(\varphi, \psi) = (\varphi_{b,\omega,k}, B \varphi_{b,\omega,k})$ be the pair of periodic solution for the system $(\ref{SystemEDO2})$ given by $(\ref{smooth1})$. There exists $b^{*}>0$ depending on $L>0$ such that if $b>b^*$ then the wave $\Psi=(\varphi, B\varphi)$ is orbitally stable in $X$.\\
	{\rm (ii)} In system $(\ref{SystemEDO2})$, consider $A_1=A_2=0$. Let $(\varphi, \psi)=(\varphi_{k}, B\varphi_{k})$ be the periodic solution given by $(\ref{smooth2})$. The periodic wave $\Psi=(\varphi, \psi)$ is orbitally stable in $X$.
\end{theorem}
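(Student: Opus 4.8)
The plan is to verify hypotheses (H1) and (H2) in each of the two cases and then invoke the abstract criterion of \cite{andrade} and \cite{GrillakisShatahStraussI}. The existence part of (H1)---a smooth surface (resp. curve) of $L$-periodic solutions $\Psi=(\varphi,B\varphi)$ of minimal period $L$---is already furnished by \eqref{smooth1} for part {\rm (i)} and by \eqref{smooth2} together with $A_1=A_2=0$ for part {\rm (ii)}. What must still be checked in (H1) is the spectral picture of $\mathcal{L}$: a single negative eigenvalue, simple, and a simple kernel generated by $\Psi'=(\varphi',\psi')$. The identity $\mathcal{L}(\varphi',\psi')=(0,0)$, obtained by differentiating \eqref{SystemEDO1}, already places $\Psi'$ in $\Ker(\mathcal{L})$.

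For the spectral count I would exploit the explicit diagonalizations recorded above, which reduce $\mathcal{L}$ by a similarity transformation to a pair of decoupled Hill operators: $\mathcal{L}_1,\mathcal{L}_2$ in \eqref{L1L2case12} when $a+b=0$, and $\mathcal{L}_3,\mathcal{L}_4$ in \eqref{L1L2case21} when $a+b=\frac16$. Each is a Schr\"odinger operator with a one-hump cnoidal potential, so Floquet theory applies. Since $\Psi'$ is proportional to $\varphi'$, under the transformation the kernel is carried into a single component, so that $\varphi'$ lies in the kernel of exactly one operator of the pair while the companion is to be shown positive definite. The function $\varphi'\propto\cn\,\sn\,\dn$ has exactly two zeros per period; I would identify the associated eigenvalue $\lambda=0$ as the \emph{second} periodic eigenvalue, simple, so that the operator carrying the kernel contributes precisely one negative eigenvalue and a one-dimensional kernel. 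The delicate ordering of the band edges is the part that must be pinned down; to avoid repeating it for every parameter value I would use the isoinertial argument of Natali and Neves \cite{NataliNeves2014}, which keeps the inertia constant along the smooth family once it is computed at one convenient reference value of the parameters. Sylvester's law of inertia then assembles the pieces into $n(\mathcal{L})=1$ and $\dim\Ker(\mathcal{L})=1$ with $\Ker(\mathcal{L})=[\Psi']$, completing (H1).

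To obtain (H2) I would treat the two cases by the mechanisms already outlined. In part {\rm (i)} the waves depend smoothly on $\omega$, so I set $\Phi=(\partial_\omega\varphi,\partial_\omega\psi)$ and $Q=F+(\partial_\omega A_2)M_1+(\partial_\omega A_1)M_2$; differentiating the profile equation $G'(\Psi)=0$ in $\omega$ yields $\mathcal{L}\Phi=Q'(\Psi)$ directly. In part {\rm (ii)}, pinned to $\omega=0$ with $A_1=A_2=0$, no $\omega$-derivative is available; instead I take $Q=F$ and note that translation invariance of $F$ gives $Q'(\Psi)\perp\Psi'$, hence $Q'(\Psi)\in R(\mathcal{L})=\Ker(\mathcal{L})^\perp$, so some $\Phi\in D(\mathcal{L})$ solves $\mathcal{L}\Phi=Q'(\Psi)$, albeit not explicitly. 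In either case the second identity in (H2) is automatic: by self-adjointness and $\Psi'\in\Ker(\mathcal{L})$, one has $(\mathcal{L}\Phi,\Psi')=(\Phi,\mathcal{L}\Psi')=0$.

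The crux, and the step I expect to be the main obstacle, is proving $I=(\mathcal{L}\Phi,\Phi)<0$. In part {\rm (i)}, $I$ is an unwieldy rational function of $(b,\omega,k)$; using the explicit data \eqref{b0case11}--\eqref{A2case11} and \eqref{Bcase11} I would isolate its leading behaviour in $b$ and show it turns negative once $b$ exceeds a threshold $b^{*}=b^{*}(L)$, confirming the sign analytically where possible and by \emph{Maple}-assisted computation over $\omega\in(-1,1)$, $k\in(0,\frac{1}{\sqrt2})$ otherwise. In part {\rm (ii)}, where $\Phi$ is implicit, I would instead write $I=(\Phi,Q'(\Psi))$ and reduce the sign question to the restriction of $\mathcal{L}^{-1}$ to the span of $Q'(\Psi)$ inside $R(\mathcal{L})$; pushing $Q'(\Psi)$ through the explicit diagonalization onto the $\mathcal{L}_3,\mathcal{L}_4$ components and using the relation \eqref{relationa-intro} for $a$, the inequality $I<0$ becomes a comparison among explicitly computable complete elliptic integrals, which I would close by the required estimates supplemented with numerics. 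Once $I<0$ is secured in both cases, (H1) and (H2) hold and the abstract theorems deliver the orbital stability of $\Psi$ in $X$.
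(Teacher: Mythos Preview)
Your proposal is correct and follows essentially the same route as the paper: the diagonalization of $\mathcal{L}$ into the Hill operators $\mathcal{L}_1,\mathcal{L}_2$ (resp.\ $\mathcal{L}_3,\mathcal{L}_4$), the Floquet/isoinertial count to place $\lambda=0$ as the second eigenvalue of the operator carrying $\varphi'$ while the companion is shown positive (using $\varphi<0$ together with the sign of $\ell_2$, resp.\ the sign of the coefficient $-1$ in $\mathcal{L}_4$), the choice of $\Phi=\partial_\omega\Psi$ with $Q=F+(\partial_\omega A_2)M_1+(\partial_\omega A_1)M_2$ in case {\rm (i)} and the range argument for $Q=F$ in case {\rm (ii)}, and finally the numerically assisted verification of $I<0$. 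The only detail you leave implicit that the paper makes explicit is the computation of $I$ in case {\rm (ii)}: the paper exploits $\mathcal{L}_3^{-1}\varphi''=-\partial_a\varphi$ (obtained by differentiating $a\varphi''+\varphi+\varphi^2=0$ in $a$) to evaluate $\mathcal{J}_1=(\mathcal{L}_3^{-1}f_0,f_0)$ exactly, and bounds $\mathcal{J}_2=(\mathcal{L}_4^{-1}f_0,f_0)$ from above via the first-eigenvalue estimate $\alpha_0\ge 1-b_0$, then checks $\frac{8}{3}\mathcal{J}_1+\frac{1}{3}\mathcal{J}_3<0$ numerically.
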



Our paper is organized as follows: In Section \ref{section2} we present some basic notations presents in our paper. In Section \ref{Existenceofsolution}, we show the existence of periodic traveling wave solutions for the system \eqref{abcd}. In Section \ref{spectral-analysis}, we present spectral properties for the linearized operator $\mathcal{L}$ related to the ``abcd'' Boussinesq system \eqref{abcd}. Finally, our result about orbital stability associated to periodic traveling waves is shown in Section \ref{stability-section}.

\section{Notation}\label{section2}

Some basic notations concerning the periodic Sobolev spaces and other useful notations are introduced. For a more complete explanation of this topic, we refer the reader to see \cite{Iorio}. For a fixed $L>0$, $L^2_{per}:=L^2_{per}([0,L])$ is the space of all square (Lebesgue) integrable functions which are $L$-periodic. For $s\geq0$, the Sobolev space
$H^s_{per}:=H^s_{per}([0,L])$
is the set of all periodic distributions such that
$
\|f\|^2_{H^s_{per}}=L\sum_{n=-\infty}^{\infty}(1+|n|^2)^s|\hat{f}(n)|^2 <\infty,
$
where $\hat{f}$ is the periodic Fourier transform of $f$. The space $H^s_{per}$ is a  Hilbert space with natural inner product denoted by $(\cdot, \cdot)_{H^s}$. When $s=0$, the space $H^s_{per}$ is isometrically isomorphic to the space  $L^2_{per}$, that is, $L^2_{per}=H^0_{per}$. The norm and inner product in $L^2_{per}$ will be denoted by $\|\cdot \|_{L^2_{per}}$ and $(\cdot, \cdot)_{L^2_{per}}$. We use the notation $H_{per}^s\times H_{per}^s$ to refer to the cross product between spaces $H_{per}^s$ with usual norm and inner product.

The symbols $\sn(\cdot, k), \dn(\cdot, k)$ and $\cn(\cdot, k)$ represent the Jacobi elliptic functions of \textit{snoidal}, \textit{dnoidal}, and \textit{cnoidal} type, respectively. For $k \in (0, 1)$, $\K(k)$ and $\E(k)$ will denote the complete elliptic integrals of the first and second kind, respectively. For the precise definition and additional properties of the elliptic functions we refer the reader to  \cite{ByrdFriedman}.

We denote the number of negative eigenvalues and the dimension of the kernel of a certain linear operator $\mathcal{A}$, by $\text{n}(\mathcal{A})$ and $\text{z}(\mathcal{A})$, respectively.

\section{Existence of periodic waves}\label{Existenceofsolution}

In this section, our purpose is to explicit the existence of $L-$periodic solutions $(\varphi, \psi)$ associated to the system
\begin{equation}\label{SystemEDO3}
\begin{cases}
-\omega \varphi+\psi+\varphi \psi+a \psi''+b\omega \varphi''-A_1=0 \\
-\omega \psi+\varphi+\frac{1}{2}(\psi^2)+c\varphi''+d\omega \psi''-A_2=0
\end{cases}
\end{equation}
for some constants $A_1, A_2$. In whole this paper we are interested in solutions of the form $(\varphi, \psi)$ with $\psi = B \varphi$, where $B$ is a convenient constant. Motivated by \cite[Section 4]{ChenChenNguyen}, let us consider the ansatz
\begin{equation}\label{cnoidalsolution}
\varphi(x)=b_0+b_2 \, {\rm cn}^2\left( \frac{2{\rm K}(k)}{L} x, k \right),
\end{equation}
 where $b_0$ and $b_2$ are real constants to be chosen. Consider $a=c<0$ and $b=d>0$. Substituting $(\ref{cnoidalsolution})$ into the system \eqref{SystemEDO3}, we can obtain two basic cases:

\subsection{Case 1: $a+b = 0$.} We obtain in this case that parameters $b_0, b_2$ in $(\ref{cnoidalsolution})$ are given by
\begin{equation}\label{b0case1}
b_0=\frac{-64\,(B\omega - 1)^2 \,b\, \left(k^2 - \frac{1}{2}\right){\rm K}(k)^2 + 2\left( \left(\omega^2 + \frac{1}{2}\right)B - \frac{3}{2}{\omega}\right) B L^2}{2 \, B^2 \, L^2 (B\omega - 1)}
\end{equation}
and
\begin{equation}\label{b2case1}
b_2=\frac{48\,b \, k^2 \, (B\omega -1) {\rm K}(k)^2}{B^2 \, L^2},
\end{equation}
where $k \in \big(0, \sfrac{1}{\sqrt{2}}\big)$ and $\omega \in (-1,1)$. The constant  $B$ is then given by
\begin{equation}\label{Bcase1}
B=-\frac{\omega}{2}+\frac{1}{2}\sqrt{\omega^2+8}>0 \quad \text{or} \quad B=-\frac{\omega}{2}-\frac{1}{2}\sqrt{\omega^2+8}<0.
\end{equation}
In addition, the integration constants $A_1, A_2$ are expressed as
\begin{equation}\label{A1case1}
A_1=-{\frac { \left( B-\omega \right) ^{2} ( -256\,{b}^{2} \, r(k) \,  {\rm K}(k)^{4}+{L}^{4} )}{4\,B{L}^{4}}}
\end{equation}
and
\begin{equation}\label{A2case1}
A_2={\frac {1024\,{b}^{2} r(k)  \left( B\omega
-1 \right) ^{4}  {\rm K} (k)^{4}-
4\left( {B}^{3} s(\omega)+ \left( -3\,{\omega}
^{3}- \tfrac{3\omega}{2}\right) {B}^{2}+ \left( \tfrac{11}{4}{\omega}^{2}+1 \right) B-\omega
 \right)B{L}^{4}}{8\,{L}^{4}{B}^{2} \left( B\omega-1 \right)^{2}}},
\end{equation}
where $r(k)= {k}^{4}-{k}^{2}+1$ and $s(\omega):= {\omega}^{4}+{\omega}^{2}-\tfrac{1}{4}$.

Summarizing, we have the following result.

\begin{proposition}[Cnoidal waves solutions for the case $a+b = 0$]\label{cnoidalcurve1}
Let $L>0$ be fixed and consider $(b,\omega,k) \in (0,+\infty)\times (-1,1)\times \big(0,\sfrac{1}{\sqrt{2}}\big)$. If $a=c < 0$, $b=d > 0$ and $a+b = 0$, then the system \eqref{SystemEDO3} has $L$-periodic solutions $(\varphi, \psi)=(\varphi, B \varphi) \in C^\infty_{per}([0,L]) \times C^\infty_{per}([0,L])$ with $\varphi$ having the cnoidal profile in \eqref{cnoidalsolution}. Parameters $b_0, b_2, B$  depend smoothly on $b>0$, $\omega \in (-1,1)$ and $k \in \big(0,\sfrac{1}{\sqrt{2}}\big)$ and they are given by \eqref{b0case1}, \eqref{b2case1} and \eqref{Bcase1}, respectively. In addition, we have that $\varphi<0$.
\end{proposition}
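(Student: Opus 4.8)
The plan is to collapse the coupled system \eqref{SystemEDO3} to a single scalar ODE for $\varphi$ and then force the cnoidal profile \eqref{cnoidalsolution} to solve it. First I would substitute $\psi=B\varphi$ together with the standing hypotheses $a=c$, $b=d$ and $a+b=0$ (equivalently $a=-b$) into both equations of \eqref{SystemEDO3}. Each equation then reduces to a scalar relation of the form $\alpha\varphi''+\beta\varphi^2+\gamma\varphi-A_i=0$; explicitly, the coefficient of $\varphi''$ is $b(\omega-B)$ in the first equation and $b(\omega B-1)$ in the second, the coefficient of $\varphi^2$ is $B$ and $\tfrac12 B^2$, and the coefficient of $\varphi$ is $B-\omega$ and $1-\omega B$. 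Since a single function $\varphi$ must satisfy both relations, I would impose proportionality of the two triples of coefficients. Matching the $\varphi^2$ and $\varphi$ ratios gives $B(B-\omega)=2(1-\omega B)$, i.e. the quadratic $B^2+\omega B-2=0$, whose two roots are exactly the values in \eqref{Bcase1}; the proportionality of the $\varphi''$ coefficients reproduces the \emph{same} quadratic precisely because $a+b=0$, so the two equations are genuinely compatible rather than overdetermined.

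With $B$ fixed, I would work with one scalar equation $\alpha\varphi''+\beta\varphi^2+\gamma\varphi-A=0$ and insert \eqref{cnoidalsolution} with $\lambda=2\K(k)/L$. Writing $w=\cn^2(\lambda x,k)$ and using $\sn^2+\cn^2=1$ and $\dn^2=1-k^2+k^2\cn^2$, one finds that $w$ obeys the second-order identity $w''=\lambda^2\bigl(2(1-k^2)+(8k^2-4)w-6k^2w^2\bigr)$, so the left-hand side of the ODE becomes a quadratic polynomial in $w$. Equating its coefficients of $w^2$, $w^1$ and $w^0$ to zero produces three algebraic equations: the $w^2$ equation yields $b_2=6k^2\alpha\lambda^2/\beta$, which simplifies to \eqref{b2case1}; the $w^1$ equation then determines $b_0$ and gives \eqref{b0case1}; and the $w^0$ equation fixes the constant of integration, producing \eqref{A1case1} and \eqref{A2case1} from the first and second equations respectively. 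I would note that $B\omega-1\neq0$ and $B\neq0$ throughout the parameter box, so every closed-form expression is well defined, and that since $\K(k)$ and $\sqrt{\omega^2+8}$ are real-analytic the map $(b,\omega,k)\mapsto(b_0,b_2,B)$ is smooth; as $\cn^2(\cdot,k)$ has minimal period $2\K(k)$, the scaling $\lambda=2\K(k)/L$ makes $\varphi$ a smooth function of minimal period $L$, and $(\varphi,B\varphi)\in C^\infty_{per}([0,L])\times C^\infty_{per}([0,L])$ depends smoothly on $(b,\omega,k)$ into $H^2_{per}\times H^2_{per}$.

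It remains to prove $\varphi<0$, which I regard as the main obstacle. The starting point is the sign of $b_2$: I would show that $B\omega<1$ for every $\omega\in(-1,1)$ on both branches of $B$, since after isolating the radical and squaring, the inequality $B\omega<1$ is equivalent to $\omega^2<1$. Hence $B\omega-1<0$, and \eqref{b2case1} forces $b_2<0$. Because $\cn^2(\lambda x,k)\in[0,1]$ and $b_2<0$, the maximum of $\varphi$ is attained where $\cn^2=0$, so $\max\varphi=b_0$ and the claim $\varphi<0$ collapses to the single inequality $b_0<0$. For this I would examine \eqref{b0case1}: its denominator $2B^2L^2(B\omega-1)$ is negative, so it suffices to show the numerator is positive. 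The first summand is positive because $k\in(0,\tfrac1{\sqrt2})$ gives $k^2-\tfrac12<0$, and a short case analysis on the signs of $B$ and $\omega$ shows the second summand $2\bigl((\omega^2+\tfrac12)B-\tfrac32\omega\bigr)BL^2$ is also nonnegative on the whole range. The delicate point is exactly this last sign check near $\omega\to\pm1$, where the second summand degenerates to zero and the positivity of $b_0$ is sustained entirely by the first summand through $k^2-\tfrac12<0$.
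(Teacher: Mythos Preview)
Your argument is correct and follows the same route the paper takes---substitute the cnoidal ansatz $(\varphi,B\varphi)$ into \eqref{SystemEDO3} and match coefficients---though the paper itself provides essentially no details beyond recording the resulting formulas \eqref{b0case1}--\eqref{A2case1} and placing a $\blacksquare$; your derivation of $B$ from the compatibility of the two scalar equations and your sign analysis for $\varphi<0$ go well beyond what the paper writes down. Two minor corrections: in the last sentence ``positivity of $b_0$'' should read ``positivity of the numerator'' (you want $b_0<0$), and your worry about the ``delicate point'' is unnecessary, since the identity $B^2(\omega^2+2)-3=\tfrac{2(\omega^2-1)^2}{\,\omega^2+4\mp\omega\sqrt{\omega^2+8}\,}$ (equivalently, after squaring, $4(\omega^2-1)^2\ge 0$) shows the second summand is \emph{strictly} positive throughout the open interval $(-1,1)$ and only vanishes at the excluded endpoints.
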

\begin{flushright}
	$\blacksquare$
\end{flushright}
\begin{remark}
	The cnoidal solution $\varphi$ in $(\ref{solcn1})$, where $b_0$ and $b_2$ are given by $(\ref{b0case1})$ and $(\ref{b2case1})$, can be determined for all $k\in (0,1)$. The restriction to the case $k\in \big(0,\sfrac{1}{\sqrt{2}}\big)$ comes from the fact that we are considering only negative solutions in our study. The reason for that is because we need to obtain that $\mathcal{L}_2$ in \eqref{L1L2case12} is a positive operator to conclude, since $n(\mathcal{L}_1)=1$, that $n(\mathcal{L})=1$ (see Lemmas $\ref{lemma-L2}$ and $\ref{lemma-L1}$ in Section $\ref{spectral-analysis}$).
\end{remark}
\subsection{Case 2: $a + b = \tfrac{1}{6}$.} In a general case, without considering $a=c$ and $b=d$, parameters $b_0, b_2$ are given by
\begin{equation}\label{b0case2}
b_0=\frac{ -64 \, (B\,d \, \omega +c)^2 \left(k^2 - \frac{1}{2} \right) {\rm K}(k)^2 - L^2 \left( (-2\,d \,\omega^2 +a)\,B + \omega(b-2c)\right) B}{2\, B^2 \, L^2 \, (B \,d \, \omega + c)}
\end{equation}
and
\begin{equation}\label{b2case2}
b_2=\frac{48\, k^2 \, (B \, d \, \omega + c) {\rm K}(k)^2}{B^2 \, L^2}.
\end{equation}

Constants of integration $A_1, A_2$ can be expressed by 
\begin{equation}\label{A1case2}
A_1 = \frac{ 256 \left( k^4 - k^2 + 1 \right) (Ba + b\omega)^2 {\rm K}(k)^4 - L^4(B-\omega)^2}{4\, B \, L^4}
\end{equation}
and
\begin{equation}\label{A2case2}
	A_2 = - \frac{-1024 r(k) (Bd\omega + c)^4 {\rm K}(k)^4 - \left( \beta_1 B^3 + 2 \beta_2 \omega B^2 + \beta_3 B + 4c^2 \omega\right) L^4 B}{8\, B^2 \, L^4 \, (Bd\omega +c)^2},
\end{equation}
where
\begin{equation*}
	\beta_1 = (-4d^2 \omega^4 - 4d^2 \omega^2 + a^2), \ \beta_2 =(-4cd + 2d^2) \omega^2 + ab - 4cd, \
	\beta_3 = (b^2 - 4c^2 + 8cd) \omega^2 - 4c^2.
\end{equation*}

The wave speed $\omega \in  \mathbb{R}$ is given by
\begin{equation}\label{omegacase2}
\omega=\pm{\frac {2(a-c)}{\sqrt {-2\,ab+2\,ac+4\,ad+2\,{b}^{2}-2\,bc-8\,bd+4\,c
d+8\,{d}^{2}}}}.
\end{equation}
Parameter $B$ is  
\begin{equation}\label{Bcase2}
B=\pm {\frac {\sqrt {2}\sqrt {-b+c+2d}}{\sqrt {a-b+2\,d}}}.
\end{equation}


Thus, by considering $a=c < 0$ and $b=d > 0$, where $a+b = \tfrac{1}{6}$, we obtain 
\begin{equation}\label{b0-case2-w=0}
	b_0 = - \frac{1}{2} \frac{(32 k^2 - 16)\, a \, {\rm K}(k)^2 + L^2}{L^2}
\end{equation}
and 
\begin{equation}\label{b2-case2-w=0}
	b_2 = \frac{24 {\rm K}(k)^2 \, a \, k^2}{L^2}.
\end{equation}

It also follows that $B = \pm \sqrt{2}$ and $\omega = 0$. In addition, constants $A_1$ and $A_2$ are independent of the choice of $B=\pm\sqrt{2}$ and they are given by
\begin{equation}\label{A1-case2-w=0}
	A_1 = \frac{1}{4} \frac{ \sqrt{2} \, (-256 \, {\rm K}(k)^4 \, a^2 \, r(k) + L^4)}{L^4}
\end{equation}
and
\begin{equation}\label{A2-case2-w=0}
	A_2 = -\frac{1}{4} \frac{ (-256 \, {\rm K}(k)^4 \, a^2 \, r(k) + L^4)}{L^4}.
\end{equation}

Summarizing, we have the following result.

\begin{proposition}[Cnoidal waves solutions for the case $a+b = \sfrac{1}{6}$]\label{cnoidalcurve2}
Let $L>0$ be fixed and consider $(a,k) \in \left(-\infty,0\right)\times (0,1)$. If $a=c<0$, $b=d>0$ and $a+b = \sfrac{1}{6}$, then the equation \eqref{SystemEDO3} has $L$-periodic solutions $(\varphi, \psi)=(\varphi, B \varphi) \in C^\infty_{per}([0,L]) \times C^\infty_{per}([0,L])$  with $\varphi$ having the cnoidal profile in \eqref{cnoidalsolution}. Parameters $b_0$ and $b_2$  depend smoothly on $a\in \left(-\infty,0\right)$ and  $k \in (0,1)$ and they are given by \eqref{b0-case2-w=0} and \eqref{b2-case2-w=0}, respectively. In addition, we have that $\omega=0$ and $B=\pm\sqrt{2}$.

\end{proposition}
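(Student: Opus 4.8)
The strategy is to substitute the ansatz into \eqref{SystemEDO3} and reduce each equation to an identity between polynomials in the elliptic function. First I would record the differential equation satisfied by the cnoidal profile. Writing $\xi = \tfrac{2\K(k)}{L}x$ and using $\sn^2 + \cn^2 = 1$, $\dn^2 = 1 - k^2 + k^2\cn^2$ and $\tfrac{d}{d\xi}\cn = -\sn\,\dn$, a direct computation gives
\[
\frac{d^2}{d\xi^2}\cn^2(\xi,k) = 2 - 2k^2 + (8k^2 - 4)\,\cn^2(\xi,k) - 6k^2\,\cn^4(\xi,k).
\]
Consequently, for $\varphi$ as in \eqref{cnoidalsolution} with $\lambda = \tfrac{2\K(k)}{L}$, eliminating $\cn^2 = (\varphi - b_0)/b_2$ yields a relation of the form $\varphi'' = P + Q\varphi + R\varphi^2$, where $P,Q,R$ are explicit expressions in $b_0, b_2, k$ and $\lambda^2 = \tfrac{4\K(k)^2}{L^2}$; in particular $R = -\tfrac{6k^2\lambda^2}{b_2}$.

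Next I would insert $\psi = B\varphi$ and this expression for $\varphi''$ into the two equations of \eqref{SystemEDO3}, using $a = c$ and $b = d$. Each equation then becomes a quadratic polynomial in $\varphi$ that must vanish identically, so I equate to zero the coefficients of $1$, $\varphi$ and $\varphi^2$, obtaining six algebraic relations among $b_0, b_2, B, A_1, A_2$ and $\omega$. The two $\varphi$-coefficient relations are independent of $b_0$ and of the integration constants; combining them produces $\omega(a+b)(B^2 - 1) = 0$, and since $a + b = \tfrac16 \neq 0$ I pursue the branch $\omega = 0$ (consistent with \eqref{omegacase2} when $a=c$). Feeding $\omega = 0$ into the two $\varphi^2$-coefficient relations forces $B^2 = 2$, that is, $B = \pm\sqrt{2}$.

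With $\omega = 0$ and $B^2 = 2$, the two equations of \eqref{SystemEDO3} collapse to the single profile equation $a\varphi'' + \varphi + \varphi^2 - A_2 = 0$ together with the compatibility relation $A_1 = B A_2$. Matching $a\varphi'' = A_2 - \varphi - \varphi^2$ against $\varphi'' = P + Q\varphi + R\varphi^2$ coefficient by coefficient then pins down the remaining unknowns: the $\varphi^2$-term gives $R = -\tfrac1a$, hence $b_2 = \tfrac{24\,a\,\K(k)^2 k^2}{L^2}$ as in \eqref{b2-case2-w=0}; the $\varphi$-term gives $Q = -\tfrac1a$, hence $b_0$ as in \eqref{b0-case2-w=0}; and the constant term gives $A_2 = aP$, which after simplification (with $r(k) = k^4 - k^2 + 1$) becomes \eqref{A2-case2-w=0}, while $A_1 = B A_2$ reproduces \eqref{A1-case2-w=0}. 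Since $a < 0$ and $k \in (0,1)$ we have $b_2 \neq 0$, so $\varphi$ is non-constant with minimal period $L$ (the fundamental period of $\cn^2(\cdot,k)$ being $2\K(k)$), and the smoothness of $\cn$ gives $(\varphi, B\varphi) \in C^\infty_{per}([0,L]) \times C^\infty_{per}([0,L])$. Smoothness of the surface in $(a,k) \in (-\infty,0)\times(0,1)$ is then immediate, as $b_0, b_2, A_1, A_2$ are explicit rational expressions in $a$ and in the real-analytic function $\K(k)$, with nonvanishing denominators on this range.

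The step I expect to be the most delicate is the algebraic bookkeeping in the constant-coefficient equation: one must check that the constant terms arising from the two equations of \eqref{SystemEDO3} are mutually consistent (equivalently, that they collapse to the single relation $A_1 = B A_2$) and that $A_2 = aP$ simplifies exactly to the closed form \eqref{A2-case2-w=0}. This is where a symbolic computation is convenient, although the manipulation is entirely mechanical once the coefficient matching is set in place.
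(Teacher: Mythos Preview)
Your proposal is correct and follows essentially the same direct-substitution approach as the paper: insert the cnoidal ansatz with $\psi=B\varphi$ into \eqref{SystemEDO3} and match coefficients. The paper records the general parameter formulas \eqref{b0case2}--\eqref{Bcase2} first and then specializes to $a=c$, $b=d$, $a+b=\tfrac16$ to read off $\omega=0$, $B=\pm\sqrt{2}$ and \eqref{b0-case2-w=0}--\eqref{A2-case2-w=0}; you instead work directly in the specialized setting and organize the computation through the identity $\varphi''=P+Q\varphi+R\varphi^2$, which is a clean way to isolate the coefficient conditions. One minor imprecision: the individual linear-in-$\varphi$ relations do involve $b_0$ through $Q$, but it is their \emph{compatibility} (eliminating $Q$) that yields $\omega(a+b)(B^2-1)=0$ free of $b_0$, exactly as you use it. Note also that $A_1=BA_2$ makes $A_1$ depend on the sign of $B$; the paper's displayed formula \eqref{A1-case2-w=0} corresponds to one choice of sign.
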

\begin{flushright}
	$\blacksquare$
\end{flushright}

\section{Spectral analysis}\label{spectral-analysis}

\subsection{Floquet Theory Framework} \label{Floquet}
Before presenting the spectral analysis concerning the operators in \eqref{matrixop}, we need to recall some basic facts about the Floquet theory (for further details see \cite{Eastham} and \cite{MagnusWinkler}).

For $n\geq1$ integer, let $\mathcal{V}\subset\mathbb{R}^n$ be an open set. For a given $r\in\mathcal{V}$, consider $\phi$ a solution of the general equation
\begin{equation}\label{geneq}
	-\phi''(x)+g(r,\phi(x))=0,
	\end{equation}
where $g$ is smooth in all variables and $\phi$ is $L_r$-periodic. Let 
$$
\mathcal{P}_{r}:H_{per}^2([0,L_r]) \subset L_{per}^2([0,L_r]) \rightarrow L_{per}^2([0,L_r])
$$ be the associated Hill operator given by
\begin{equation}\label{operatorP}
\mathcal{P}_r=-\partial_x^2+g'(r, \phi (x)),
\end{equation}
where $g$ is smooth in all variables and $g'$ indicates the derivative with respect to $\phi$.\\
\indent According to \cite[Theorem 2.1]{MagnusWinkler}, the spectrum of $\mathcal{P}_r$  is formed by an unbounded sequence of
real eigenvalues $(\lambda_n)_{n \in \mathbb{N}} \subset \mathbb{R}$ so that
\[
\lambda_0 < \lambda_1 \leq \lambda_2 <\lambda_3 \leq \lambda_4 <
\cdots\; < \lambda_{2n-1} \leq \lambda_{2n}\; \cdots,
\]
where equality means that $\lambda_{2n-1} = \lambda_{2n}$  is a double eigenvalue. Moreover, the spectrum  is characterized by the number of zeros of the eigenfunctions as: if $p$ is an eigenfunction associated to either $\lambda_{2n-1}$ or $\lambda_{2n}$, then $p$  has exactly $2n$ zeros in the half-open interval $[0, L_r)$.

Consider the general Hill equation
\begin{equation}\label{zeqL}
-f''+g'(r, \phi (x))f=0,
\end{equation}
where $f \in C_b^{\infty}([0,L_r])$, with  $C_b^{\infty}([0,L_r])$  indicating the space constituted by bounded real-valued smooth functions. If $p$ is a periodic solution of $(\ref{zeqL})$, we obtain by the classical Floquet theory in \cite{Eastham} and \cite{MagnusWinkler} the existence of a solution $y$ for the equation \eqref{zeqL} which is linearly independent with $p$ such that $\{p,y\}$ forms a fundamental set of solutions for the  Hill equation \eqref{zeqL}. Moreover,  there exists  $\Theta \in   \mathbb{R}$ (depending on $y$ and $p$) such that
\begin{equation}\label{theta0}
y(x+L)=y(x)+\Theta p(x) \quad \mbox{for all} \quad x\in\mathbb{R}.
\end{equation}
Constant $\Theta$ measures how function $y$ is periodic. To be more precise, $\Theta=0$ if and only if $y$ is periodic. This criterion is very useful to establish if the kernel of $\mathcal{P}_r$ is $1$-dimensional by proving that $\Theta\neq0$. Concerning this fact, we have the following result.

\begin{proposition}\label{theta}
If $\Theta$ is the constant given in \eqref{theta0}, then the eigenvalue $\lambda=0$ is simples if and only if $\Theta \neq 0$. Moreover, if $\Theta \neq 0$, then $\lambda_1=0$ if $\Theta<0$, and $\lambda_2=0$ if $\Theta>0$.
\end{proposition}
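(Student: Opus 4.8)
The plan is to analyze the translation (monodromy) operator on the two-dimensional solution space of the Hill equation $\mathcal{P}_r f = 0$, i.e. of \eqref{zeqL}, and to read off both the multiplicity and the spectral position of $\lambda = 0$ from the matrix of this operator in the fundamental basis $\{p,y\}$. First I would introduce the translation-by-$L_r$ operator $\mathcal{T}$ on the solution space, $(\mathcal{T}\phi)(x) = \phi(x+L_r)$. Since $p$ is periodic and $y$ satisfies \eqref{theta0}, one has $\mathcal{T}p = p$ and $\mathcal{T}y = y + \Theta p$, so in the basis $\{p,y\}$ the operator $\mathcal{T}$ is represented by the upper triangular matrix $\left(\begin{smallmatrix}1 & \Theta\\ 0 & 1\end{smallmatrix}\right)$. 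Because the periodic solutions of $\mathcal{P}_r f = 0$ are exactly the eigenvectors of $\mathcal{T}$ for the Floquet multiplier $1$, the space $\Ker(\mathcal{P}_r)$ is two-dimensional precisely when $\mathcal{T}$ is the identity, that is, when $\Theta = 0$; if $\Theta \neq 0$ the matrix is a nontrivial Jordan block and the only periodic solutions are the multiples of $p$, so $\Ker(\mathcal{P}_r)$ is one-dimensional. Since $\mathcal{P}_r$ is self-adjoint, geometric and algebraic multiplicities coincide, which already yields the first assertion: $\lambda = 0$ is simple if and only if $\Theta \neq 0$.

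For the second assertion I would work with the discriminant $\Delta(\lambda) = \operatorname{tr} M(\lambda)$, where $M(\lambda)$ is the monodromy matrix of $-f'' + (g'(r,\phi)-\lambda)f = 0$ in the standard basis $c(\cdot,\lambda), s(\cdot,\lambda)$ normalized at $x=0$. Since $p$ is $L_r$-periodic, $\lambda = 0$ is a periodic eigenvalue and $\Delta(0)=2$. The classical band/gap description of Hill's equation (see \cite{MagnusWinkler} and \cite{Eastham}) states that $\Delta-2$ vanishes exactly at the periodic eigenvalues, with $\Delta<2$ on the adjacent spectral bands and $\Delta>2$ inside the adjacent gaps; consequently, at the lower element of a spectral pair $\lambda_{2n-1}\le\lambda_{2n}$ the discriminant crosses the value $2$ from below ($\Delta'>0$) while at the upper element it crosses from above ($\Delta'<0$). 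Because the eigenfunction $p$ has exactly two zeros in $[0,L_r)$, the zero-counting part of the Magnus--Winkler theorem places $\lambda = 0$ in the pair $\{\lambda_1,\lambda_2\}$ (never at the simple ground state $\lambda_0$), so it remains only to decide the sign of $\Delta'(0)$.

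The key computation is to express $\Delta'(0)$ through $\Theta$. Differentiating the fundamental solutions with respect to $\lambda$ and solving the resulting inhomogeneous equations by variation of parameters gives the standard formula
\begin{equation*}
\Delta'(\lambda) = \bigl(c(L_r)-s'(L_r)\bigr)\!\int_0^{L_r}\! cs\,dx - s(L_r)\!\int_0^{L_r}\! c^2\,dx + c'(L_r)\!\int_0^{L_r}\! s^2\,dx .
\end{equation*}
Evaluating at $\lambda = 0$ and eliminating the entries of $M(0)-I$ by means of the Jordan relations $(M(0)-I)\,(p(0),p'(0))^{\top} = 0$ and $(M(0)-I)\,(y(0),y'(0))^{\top} = \Theta\,(p(0),p'(0))^{\top}$, together with the constancy of the Wronskian $W[p,y] = p(0)y'(0)-p'(0)y(0)$, reduces this to
\begin{equation*}
\Delta'(0) = -\frac{\Theta}{W[p,y]}\int_0^{L_r} p(x)^2\,dx .
\end{equation*}
Normalizing the second solution $y$ so that $W[p,y]>0$ (as in the reduction-of-order construction, where $W[p,y]=1$) and using $\int_0^{L_r}p^2\,dx>0$, we obtain $\operatorname{sign}\Delta'(0) = -\operatorname{sign}\Theta$. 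Combined with the band/gap dichotomy above, this gives $\lambda_1 = 0$ when $\Theta<0$ and $\lambda_2 = 0$ when $\Theta>0$, completing the argument.

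The main obstacle is the sign bookkeeping in the last step: deriving the identity $\Delta'(0) = -\Theta\,\|p\|_{L^2_{per}}^2/W[p,y]$ with the correct sign and fixing the orientation convention $W[p,y]>0$ that makes the statement about the sign of $\Theta$ unambiguous (note that the ratio $\Theta/W[p,y]$, and not $\Theta$ alone, is invariant under the admissible replacement $y \mapsto \alpha y + \beta p$). Everything else is either the elementary linear algebra of a $2\times 2$ Jordan block or a direct appeal to the Floquet structure recorded in \cite{MagnusWinkler} and \cite{Eastham}.
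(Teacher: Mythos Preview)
The paper does not give its own proof here; it simply cites \cite[Theorem~3.3]{NataliNeves2014}. Your argument is correct and is precisely the Floquet/discriminant computation that underlies that cited result: the Jordan-block description of the shift operator on the solution space yields the simplicity criterion, and the identity $\Delta'(0)=-\Theta\,\|p\|_{L_{per}^2}^{2}/W[p,y]$ together with the monotonicity of $\Delta$ across the pair $\{\lambda_1,\lambda_2\}$ pins down which of the two equals zero. Your remark that only the ratio $\Theta/W[p,y]$ is intrinsic is exactly right; the tacit convention $W[p,y]>0$ is the one the paper actually uses in its applications---the initial data in \eqref{PVI1} and \eqref{PVI2} are chosen so that $W[\varphi',y]=1$, which makes the sign statements in \eqref{theta1} and \eqref{theta2} consistent with the proposition.
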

\begin{proof}
See \cite[Theorem 3.3]{NataliNeves2014}.
\end{proof}

We also need the concept of isoinertial family of self-adjoint operators.

\begin{definition}\label{defi12}
Consider $r \in \mathcal{V}$. The inertial index of the operator $\mathcal{P}_{r}$ is a pair ${\rm in}(\mathcal{P}_{r})=(n,z)\in \mathbb{N}^2$, where $n \in \mathbb{N}$ denotes the dimension of the negative subspace of  and $z \in \mathbb{N}$ denotes the dimension of  $\Ker(\mathcal{P}_{r})$.
\end{definition}

\begin{definition}\label{defi1234}
The family of linear operators  $\{\mathcal{P}_{r}\;  ; \; r \in \mathcal{V}\}$ is said to be isoinertial if
 ${\rm in}(\mathcal{P}_{r})$ is constant for any $r \in \mathcal{V}$.
\end{definition}

We have the following result concerning the behaviour of the non-positive spectrum of the linear operator  $\mathcal{P}_{r}$ defined in \eqref{operatorP} by knowing it for a fixed value $r_0 \in \mathcal{V}$.

\begin{proposition}\label{isonertialP}
Let $r \in \mathcal{V}$ and $\mathcal{P}_{r}$ be the Hill Operator defined in \eqref{operatorP}. If $\lambda=0$ is an eigenvalue of  $\mathcal{P}_{r}$ and $g$ is of class $C^1$, then the family of operators $\{\mathcal{P}_{r}\;  ; \; r \in \mathcal{V}\}$ is isoinertial with respect to the parameters $r \in \mathcal{V}$. In particular, if $\Theta<0$ (respectively $\Theta>0$) for some $r_0 \in \mathcal{V}$, then $\Theta<0$ (respectively $\Theta>0$) for all $r \in \mathcal{V}$. Moreover, the inertial index ${\rm in}(\mathcal{P}_r)$ is constant in terms of the period $L_r$ of the function $\varphi$. In particular, if $\Theta<0$ (respectively $\Theta>0)$ for some $L_{r_0}>0$ in a convenient open interval, then $\Theta<0$ (respectively $\Theta>0$) for all $L_r>0$ in the same interval.
\end{proposition}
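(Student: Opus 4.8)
The plan is to pin the inertial index by combining the permanent presence of the zero eigenvalue with continuity of the spectrum and the oscillation count from Floquet theory. First I would record the structural fact that differentiating the profile equation \eqref{geneq} in $x$ gives $\mathcal{P}_r\phi'=0$, so that $\lambda=0$ is an eigenvalue of $\mathcal{P}_r$ for every $r\in\mathcal{V}$ with eigenfunction $\phi'$; in particular $z(\mathcal{P}_r)\geq1$ throughout. It is this persistence that prevents the inertial index from drifting as $r$ varies.

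Next I would establish continuous dependence. Since $g$ is $C^1$ and $\phi$ solves \eqref{geneq} with $L_r$-periodic data, the smooth dependence of ODE solutions on parameters makes $r\mapsto(\phi,L_r)$ continuous, hence the potential $g'(r,\phi(\cdot))$ continuous in $r$. To deal with the varying period I would pass, via the unitary change of variables $x=L_r y$, to an equivalent Hill operator on the fixed space $L^2_{per}([0,1])$ whose coefficients vary continuously with $r$ and whose resolvent is compact; multiplication by the positive factor $L_r^2$ normalizes the leading coefficient and preserves both $n$ and $z$. On this fixed space the family converges in the norm-resolvent sense, so each ordered eigenvalue $\lambda_j(r)$ depends continuously on $r$.

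The core step is to show that the index occupied by the zero eigenvalue in the ordered list $\lambda_0<\lambda_1\leq\lambda_2<\cdots$ cannot move. By the oscillation characterization stated after \eqref{operatorP}, if $\phi'$ has exactly $2m$ zeros in $[0,L_r)$ then $0\in\{\lambda_{2m-1},\lambda_{2m}\}$. The zeros of $\phi'$ are simple, since $\phi'(x_0)=\phi''(x_0)=0$ would force $\phi'\equiv0$ by uniqueness for the linear equation $\mathcal{P}_r\phi'=0$; together with the continuous dependence of $\phi'$ on $r$ this makes the integer $2m$ locally constant, fixing the position of the zero eigenvalue. The remaining dichotomy $0=\lambda_{2m-1}$ versus $0=\lambda_{2m}$ is resolved exactly by the sign of $\Theta$ via Proposition \ref{theta}: $\Theta<0$ yields $z(\mathcal{P}_r)=1$ with $2m-1$ eigenvalues below zero, $\Theta>0$ yields $z(\mathcal{P}_r)=1$ with $2m$ eigenvalues below zero, and $\Theta=0$ yields $z(\mathcal{P}_r)=2$. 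Because the fundamental solutions $p,y$ of \eqref{zeqL}, and hence $\Theta$, depend continuously on $r$, the sign of $\Theta$ is locally constant; this gives at once the ``in particular'' statement on preservation of $\mathrm{sign}(\Theta)$ and the constancy of $(n(\mathcal{P}_r),z(\mathcal{P}_r))$ on each connected component of $\mathcal{V}$, while the ``moreover'' clause about the period follows verbatim by viewing $L_r$ as a continuous function of $r$.

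I expect the main obstacle to be the collision scenario, in which an eigenvalue $\lambda_j(r)$ runs into the persistent zero eigenvalue and momentarily raises $z$ to $2$; a priori $\lambda_j$ could then pass through zero and change $n$. The oscillation bookkeeping is precisely what excludes a net crossing, since such a crossing would change the number of zeros of $\phi'$, contradicting its local constancy. A secondary technical point is the moving interval $[0,L_r)$, handled cleanly by the rescaling to $[0,1]$; one should also note that the conclusion is ``isoinertial'' on each connected component of $\mathcal{V}$, which suffices here since the parameter sets in \eqref{smooth1} and \eqref{smooth2} are connected.
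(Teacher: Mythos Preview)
The paper does not actually prove this proposition: its ``proof'' is the single line ``See \cite[Theorems 3.8 and 3.12]{NataliPastorCristofani}.'' So there is nothing to compare against except whether your sketch stands on its own. Your outline --- persistence of $\phi'\in\Ker(\mathcal{P}_r)$, rescaling to a fixed interval, norm--resolvent continuity, and the use of Proposition~\ref{theta} --- is indeed the skeleton of the arguments in the cited works. What does not go through is your resolution of the collision scenario.

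You claim that a crossing of an eigenvalue through $0$ ``would change the number of zeros of $\phi'$''. It would not. The oscillation theorem tells you only that any eigenfunction attached to $\lambda_{2m-1}$ or $\lambda_{2m}$ has exactly $2m$ zeros; it does \emph{not} pin the zero eigenvalue to a specific slot within that pair. Concretely, suppose along a path $r(t)$ one has $\Theta(r(t))<0$ for $t<t_0$, $\Theta(r(t_0))=0$, and $\Theta(r(t))>0$ for $t>t_0$. Then by Proposition~\ref{theta} the ordered picture is
\[
\begin{array}{lll}
t<t_0:&\lambda_{2m-1}=0,\ \lambda_{2m}>0,&\text{so }(n,z)=(2m-1,1);\\
t=t_0:&\lambda_{2m-1}=\lambda_{2m}=0,&\text{so }(n,z)=(2m-1,2);\\
t>t_0:&\lambda_{2m-1}<0,\ \lambda_{2m}=0,&\text{so }(n,z)=(2m,1).
\end{array}
\]
Throughout, $\phi'$ keeps exactly $2m$ zeros, and every eigenfunction sitting at either slot $\lambda_{2m-1}$ or $\lambda_{2m}$ also has $2m$ zeros, so nothing in the oscillation count is violated. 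The inertial index genuinely changes in this scenario; the continuity of $\Theta$ permits it. Hence your appeal to ``oscillation bookkeeping'' does not exclude the crossing, and the isoinertiality conclusion is not established by your argument.

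What the cited theorems in \cite{NataliPastorCristofani} (building on \cite{NataliNeves2014}) supply, and what your sketch is missing, is an independent mechanism that forces $\Theta\neq0$ along the whole family --- for instance, an explicit formula expressing $\Theta$ in terms of derivatives of the period map or of the profile with respect to the parameters, from which nonvanishing is read off. Only once $\{\,r:\Theta(r)=0\,\}$ is shown to be empty does the connectedness of $\mathcal{V}$ and the continuity of $\Theta$ give the sign--preservation you asserted, and with it the constancy of ${\rm in}(\mathcal{P}_r)$.
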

\begin{proof}
See \cite[Theorems 3.8 and 3.12]{NataliPastorCristofani}.
\end{proof}

\begin{remark}\label{rem12} Theorem 3.12 in \cite{NataliPastorCristofani} determines that the inertial index ${\rm in}(\mathcal{P}_r)$ is also constant in terms of the period function $L_r$, $r\in\mathcal{V}\subset\mathbb{R}^n$. Our periodic solutions $\varphi$ in $(\ref{cnoidalsolution})$ are considered with fixed period and depending on $(b,\omega,k)\in (0,+\infty)\times(-1,1)\times \big(0,\sfrac{1}{\sqrt{2}}\big)$ when $a+b=0$ or even depending on $(a,k)\in (-\infty,0 )\times(0,1)$ when $a+b=\sfrac{1}{6}$. Since both periodic waves $\varphi$ exist for an arbitrary but fixed period $L>0$, we can calculate the inertia index for a determined period $L_0>0$. By using Theorem 3.12 in \cite{NataliPastorCristofani}, we conclude that the inertial index will be the same for $L>0$ arbitrary but fixed and not depending on the parameters $b$, $\omega$ and $k$.	
\end{remark}

\subsection{Spectral Analysis of the Linearized Operator}

We are going to use the Floquet Theory and its improvement in \cite[Section 3]{NataliPastorCristofani} (see also \cite{NataliNeves2014}) to obtain the spectral properties required in assumption (H1) for the linearized operator $\mathcal{L}$ in $(\ref{matrixop})$ around the periodic wave $(\varphi, \psi)=(\varphi, B \varphi)$.

\indent Let $L>0$ be fixed. Consider  $(\varphi, \psi)=(\varphi, B \varphi)$ the solution obtained in Propositions \ref{cnoidalcurve1} and \ref{cnoidalcurve2}. We study the spectral properties of the matrix operator 
$$\mathcal{L}: H^2_{per} \times H^2_{per} \subset L^2_{per} \times L^2_{per} \longrightarrow L^2_{per} \times L^2_{per} $$ given by
\begin{equation}\label{operatorL}
 \mathcal{L}=\begin{pmatrix}
1+c\partial_x^2 & b\omega \partial_x^2+\psi-\omega \\
b \omega \partial_x^2+\psi-\omega &1+a\partial_x^2+\varphi
\end{pmatrix}
\end{equation}
for $a,b,c,d$ satisfying $a=c <0$ and $b=d > 0$. First, we observe that $\mathcal{L}$ is a self-adjoint operador and, by \eqref{SystemEDO1}, it is clear that $\mathcal{L}(\varphi', \psi')=(0,0)$. Since solutions $\varphi$ and $\psi$ are multiple each other, we can proceed as in \cite{HakkaevStanislavovaStefanov} to get the decomposition
\begin{equation}\label{similartransformation1}
\mathcal{L}=T^{-1} M T
\end{equation}
where
\begin{equation}\label{matrixT}
T=\begin{pmatrix}
\tfrac{1}{\sqrt{2}} & \tfrac{1}{\sqrt{2}} \\ 
-\tfrac{1}{\sqrt{2}} & \tfrac{1}{\sqrt{2}}
\end{pmatrix} 
\end{equation}
and
\begin{equation}\label{matrixM}
M=\begin{pmatrix}
-\partial_x^2(-a-b\omega)+(1-\omega)+\psi+\tfrac{\varphi}{2} & \tfrac{\varphi}{2} \\ 
\tfrac{\varphi}{2} & -\partial_x^2(-a+b\omega)+(1+\omega)-\psi+\tfrac{\varphi}{2}
\end{pmatrix}.
\end{equation}

To perform the spectral analysis of the operator $\mathcal{L}$ we will split our spectral analysis into two cases.

\subsection{Spectral Analysis for the Case 1: $a+b = 0$.}\label{EspectralAnalysisCase1} We can decompose matrix $M$ in \eqref{matrixM} to write $\mathcal{L}$ as a diagonal operator of the form
\begin{equation}\label{opL-matriz-similar}
	\mathcal{L} = (UST)^{-1} \left( 
	\begin{array}{cc}
		\mathcal{L}_1 & 0 \\
		0 & \mathcal{L}_2
	\end{array}
\right) (UST),
\end{equation}
where $T$ is given in \eqref{matrixT},  
\begin{equation*}\label{matrixS}
	S = \left( \begin{array}{cc}
		\sqrt{b(1-\omega)} & 0 \\
		0 & \sqrt{b(1+\omega) }
		\end{array}
	\right)\ 
	\mbox{and}\
U=\begin{pmatrix}
\alpha
   & \beta
 \\ 
\noalign{\medskip}{\frac {\alpha\, \left( 2\,B+\omega-3 \right) }{
\sqrt {1-{\omega}^{2}}}}
 &  -{\frac {\beta\, \left( 2\,B+\omega+3 \right) }{
\sqrt {1-{\omega}^{2}}}}
\end{pmatrix},
\end{equation*} 
with
\begin{equation*}
\alpha = -\sqrt {b \left( 1+\omega \right) } \left(B-1\right) \left( 2\,B+\omega+3 \right),
\end{equation*}
and
\begin{equation*}
\beta = -\sqrt {b \left( 1-\omega \right) } \left( 1+B \right)  \left( -2\,B-\omega+
3 \right).
\end{equation*}
The Hill operators $\mathcal{L}_1, \mathcal{L}_2: H^2_{per} \subset L^2_{per} \rightarrow L^2_{per}$ in \eqref{opL-matriz-similar} are defined as 
\begin{equation}\label{L1L2case1}
	\mathcal{L}_1 = -\partial_x^2 + \frac{1}{b} + \ell_1 \varphi \quad \text{ and } \quad \mathcal{L}_2 = -\partial_x^2 + \frac{1}{b} + \ell_2 \varphi,
\end{equation}
where
\begin{equation}\label{l1l2}
	\ell_1 = \frac{4 + 2\, B\,\omega}{2\, b \, (1-\omega^2)} \quad \text{ and } \quad \ell_2 = \frac{-2 + 2\, B\,\omega }{2\, b \, (1-\omega^2)}.
\end{equation}

The operators $\mathcal{L}_1$ and $\mathcal{L}_2$ in \eqref{L1L2case2} play an important role in our spectral analysis. In fact, using the Sylvester law of inertia we obtain
$
\sigma(\mathcal{L})=   \sigma(\mathcal{L}_1) \cup  \sigma(\mathcal{L}_2).   
$
In particular, one has
\begin{equation}\label{relationnegative}
{\rm n}(\mathcal{L})= {\rm n}(\mathcal{L}_1)+{\rm n}(\mathcal{L}_2) \quad \text{and} \quad    {\rm z}(\mathcal{L})= {\rm z}(\mathcal{L}_1)+{\rm z}(\mathcal{L}_2).
\end{equation}

Since $B$ is given in \eqref{Bcase1}, we obtain that $\ell_2 < 0$ and $\varphi<0$ for all $b>0$, $k \in \big( 0, \sfrac{1}{\sqrt{2}} \big)$ and $\omega \in (-1,1)$.  Thus, $\mathcal{L}_2$ is a positive operator, the number of negative eigenvalues of $\mathcal{L}_2$ is zero and ${\rm Ker}(\mathcal{L}_2) = \{0\}$. 

Summarizing the above, we can establish the following result.
\begin{lemma}\label{lemma-L2}
	Let $(\varphi, \psi) = (\varphi_{b,\omega,k}, B \varphi_{b,\omega,k})$ be the pair of periodic solution given by Proposition $\ref{cnoidalcurve1}$. The spectrum of $\mathcal{L}_2$ in \eqref{L1L2case1} is only constituted by a discrete set of positive eigenvalues.
\end{lemma}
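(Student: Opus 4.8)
The plan is to show that $\mathcal{L}_2$ is a strictly positive operator; the statement about the spectrum then follows at once, since $\mathcal{L}_2$ is a Hill operator whose spectrum is, by the Floquet theory recalled in Subsection \ref{Floquet}, a discrete unbounded sequence of eigenvalues $\lambda_0 < \lambda_1 \leq \lambda_2 < \cdots$. Strict positivity forces $\lambda_0 > 0$, whence every eigenvalue is positive.

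The first step is to pin down the sign of the coefficient $\ell_2$ in \eqref{l1l2}. Since $\omega \in (-1,1)$ we have $1-\omega^2 > 0$, and $b > 0$, so the denominator $2b(1-\omega^2)$ is positive and the sign of $\ell_2$ coincides with the sign of $B\omega - 1$. Now $B$ in \eqref{Bcase1} is a root of $B^2 + \omega B - 2 = 0$ (its two branches have sum $-\omega$ and product $-2$), so that $B\omega = 2 - B^2$ and hence $B\omega - 1 = 1 - B^2$. A short check of the two branches shows that $B \in (1,2)$ for the positive root and $B \in (-2,-1)$ for the negative root whenever $\omega \in (-1,1)$ (the map $\omega \mapsto B(\omega)$ is monotone on this interval and attains the stated endpoint values at $\omega = \mp 1$). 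In either case $B^2 > 1$, so $\ell_2 < 0$.

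With $\ell_2 < 0$ established, I would combine it with the fact that $\varphi < 0$ (Proposition \ref{cnoidalcurve1}) to conclude that $\ell_2\varphi > 0$ pointwise, and therefore that the zeroth-order coefficient of $\mathcal{L}_2$ obeys $\tfrac{1}{b} + \ell_2\varphi(x) > \tfrac{1}{b} > 0$ for every $x$. Then, for any $f \in H^2_{per}$,
$$
(\mathcal{L}_2 f, f)_{L^2_{per}} = \int_0^L |f'|^2 + \Big(\tfrac{1}{b} + \ell_2\varphi\Big) f^2 \, dx \geq \tfrac{1}{b}\,\|f\|_{L^2_{per}}^2,
$$
with equality forcing $f \equiv 0$. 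Hence $\mathcal{L}_2$ is strictly positive, so ${\rm n}(\mathcal{L}_2) = 0$ and ${\rm Ker}(\mathcal{L}_2) = \{0\}$, and its (discrete) spectrum lies in $(0,+\infty)$.

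There is no serious obstacle here: the only computation worth isolating is the verification that $B^2 > 1$ throughout the parameter range, which is what underlies $\ell_2 < 0$; everything else is a direct quadratic-form estimate. The value of this lemma is structural rather than technical — it is precisely what lets us reduce ${\rm n}(\mathcal{L})$ to ${\rm n}(\mathcal{L}_1)$ via the Sylvester-law identity \eqref{relationnegative}.
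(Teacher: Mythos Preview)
Your proof is correct and follows essentially the same approach as the paper: the paper simply asserts that $\ell_2 < 0$ and $\varphi < 0$ on the stated parameter range and concludes that $\mathcal{L}_2$ is a positive operator. Your argument supplies the explicit verification of $\ell_2 < 0$ via the quadratic relation $B^2 + \omega B - 2 = 0$ and the resulting identity $B\omega - 1 = 1 - B^2$, together with the quadratic-form estimate, which the paper leaves implicit.
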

\begin{flushright}
	$\blacksquare$
\end{flushright}

\indent Next, we analyze the operator $\mathcal{L}_1$. In fact, after a tedious calculation, we can see that  $\mathcal{L}_1\varphi' = 0$, that is, $\lambda = 0$ is an eigenvalue of $\mathcal{L}_1$ whose associated eigenfunction is $\varphi'$. Since $\varphi'$ has two zeros in $[0,L)$, we obtain by the Floquet theory that $\lambda=0$  is the second or the third eigenvalue of $\mathcal{L}_1$. Our aim is to show that actually $\lambda=0$ is the second eigenvalue of $\mathcal{L}_1$. In fact, as far as we can see, there exists a function $y \in C_b^\infty([0,L])$ satisfying the Hill equation
\begin{equation}\label{Hillequation1}
	-y'' + \frac{1}{b} y + \ell_1 \varphi y = 0,
\end{equation}
where $\{\varphi',y\}$ is the fundamental set of solutions for \eqref{Hillequation1} and $y$ can be periodic or not. Since $\varphi'$ is an odd function, we obtain that $y$ is even and satisfying the following initial value problem
\begin{equation}\label{PVI1}
	\left\{ 
	\begin{array}{l}
		-y'' + \frac{1}{b} y + \ell_1 \varphi y = 0, \\
	 y(0) = -\frac{1}{\varphi''(0)}, \\
		 y'(0) = 0.
\end{array}
\right.
\end{equation}
The constant $\Theta$ appearing in $(\ref{theta0})$ is then given by 
\begin{equation}\label{theta1}
	\Theta = \frac{y'(L)}{\varphi''(0)}.
\end{equation}

First, we consider $B>0$ in \eqref{Bcase1}. To evaluate the sign of the constant $\Theta$ in $(\ref{theta1})$, we use Proposition $\ref{isonertialP}$ and Remark $\ref{rem12}$ to obtain, for a fixed $L>0$, the isoinertially of the family of operators $\mathcal{L}_1$ in terms of the parameters $ b > 0,\ \omega \in (-1,1)$ and $k \in (0,\sfrac{1}{\sqrt{2}})$. In order to improve the comprehension of the reader, we use the \textit{software Mathematica} to show some tables with the behaviour of the constant $\Theta$ by considering these change of parameters. 
\begin{table}[!htb]
	\begin{tabular}{|c|c|}
		\hline 
		\multicolumn{2}{|c|}{$k=0.5 : \omega =0$} \\ 
		\hline 
		$b$ & $\Theta$ \\
		\hline 
		$1$ & $-1.37\times 10^{-5} $ \\
		\hline
		$2$ & $-3.43\times 10^{-6}$ \\
		\hline 
		$3$ & $-1.52 \times 10^{-6}$ \\
		\hline 
		$4$ & $-8.57\times 10^{-7}$ \\
		\hline 
		$5$ & $-5.49 \times 10^{-7}$ \\
		\hline 
		$10$ & $-1.37 \times 10^{-7}$ \\
		\hline 
		$20$ & $-3.43\times 10^{-8}$ \\
		\hline 
	\end{tabular}
	\hspace{0.5cm} 
\begin{tabular}{|c|c|}
	\hline 
	\multicolumn{2}{|c|}{$k=0.3 : \omega =0.5$} \\ 
	\hline 
	$b$ & $\Theta$ \\
	\hline 
	$1$ & $-4.05\times 10^{-5}$ \\
	\hline
	$2$ & $-1.01\times 10^{-5}$ \\
	\hline 
	$3$ & $-4.50\times 10^{-6}$ \\
	\hline 
	$4$ & $-2.53\times 10^{-6}$ \\
	\hline 
	$5$ & $-1.62\times 10^{-6}$ \\
	\hline 
	$10$ & $-4.05\times 10^{-7}$ \\
	\hline 
	$20$ & $-1.01\times 10^{-7}$ \\
	\hline 
\end{tabular}
	\hspace{0.5cm} 
	\begin{tabular}{|c|c|}
		\hline 
		\multicolumn{2}{|c|}{$k=0.7 : \omega =-0.5$} \\ 
		\hline 
		$b$ & $\Theta$ \\
		\hline 
		$1$ & $-8.67\times 10^{-6}$ \\
		\hline
		$2$ & $-2.16\times 10^{-6}$ \\
		\hline 
		$3$ & $-9.63\times 10^{-7}$ \\
		\hline 
		$4$ & $-5.42\times 10^{-7}$ \\
		\hline 
		$5$ & $-3.46 \times 10^{-7}$ \\
		\hline 
		$10$ & $-8.67\times10^{-8}$ \\
		\hline 
		$20$ & $-2.16\times 10^{-8}$ \\
		\hline 
	\end{tabular}
	\hspace{0.5cm} 
	\begin{tabular}{|c|c|}
		\hline 
		\multicolumn{2}{|c|}{$k=0.1 : \omega =0.9$} \\ 
		\hline 
		$b$ & $\Theta$ \\
		\hline 
		$1$ & $-8.05 \times 10^{-4}  $ \\
		\hline
		$2$ & $-2.01  \times 10^{-4}$ \\
		\hline 
		$3$ & $-8.95  \times 10^{-5}$ \\
		\hline 
		$4$ & $-5.03  \times 10^{-5}$ \\
		\hline 
		$5$ & $-3.22  \times 10^{-5}$ \\
		\hline 
		$10$ & $-8.06\times 10^{-6}$ \\
		\hline 
		$20$ & $-2.01\times 10^{-6}$ \\
		\hline 
	\end{tabular}
\vspace{0.3cm}
	\caption{Values of $\Theta$ in $(\ref{theta1})$ for $L=1$ and $B>0$.}
	\label{tableL=1}
\end{table}
\begin{table}[!h]
	\begin{tabular}{|c|c|}
		\hline 
		\multicolumn{2}{|c|}{$k=0.5 : \omega =0$} \\ 
		\hline 
		$b$ & $\Theta$ \\
		\hline 
		$1$ & $-5.3074$ \\
		\hline
		$2$ & $-1.3268$ \\
		\hline 
		$3$ & $-0.5897$ \\
		\hline 
		$4$ & $-0.3317$ \\
		\hline 
		$5$ & $-0.2122$ \\
		\hline 
		$10$ & $-0.0530$ \\
		\hline 
		$20$ & $-0.0132$ \\
		\hline 
	\end{tabular}
	\hspace{0.5cm} 
\begin{tabular}{|c|c|}
	\hline 
	\multicolumn{2}{|c|}{$k=0.3 : \omega =0.5$} \\ 
	\hline 
	$b$ & $\Theta$ \\
	\hline 
	$1$ & $-15.6703$ \\
	\hline
	$2$ & $-3.9175$ \\
	\hline 
	$3$ & $-1.7411$ \\
	\hline 
	$4$ & $-0.9793$ \\
	\hline 
	$5$ & $-0.6268$ \\
	\hline 
	$10$ & $-0.1567$ \\
	\hline 
	$20$ & $-0.0391$ \\
	\hline 
\end{tabular}
	\hspace{0.5cm} 
	\begin{tabular}{|c|c|}
		\hline 
		\multicolumn{2}{|c|}{$k=0.7 : \omega =-0.5$} \\ 
		\hline 
		$b$ & $\Theta$ \\
		\hline 
		$1$ & $-3.3537$ \\
		\hline
		$2$ & $-0.8384$ \\
		\hline 
		$3$ & $-0.3726$ \\
		\hline 
		$4$ & $-0.2096$ \\
		\hline 
		$5$ & $-0.1341$ \\
		\hline 
		$10$ & $-0.0335$ \\
		\hline 
		$20$ & $-0.0083$ \\
		\hline 
	\end{tabular}
	\hspace{0.5cm} 
	\begin{tabular}{|c|c|}
		\hline 
		\multicolumn{2}{|c|}{$k=0.1 : \omega =0.9$} \\ 
		\hline 
		$b$ & $\Theta$ \\
		\hline 
		$1$ & $-311.4599$ \\
		\hline
		$2$ & $-77.8617$ \\
		\hline 
		$3$ & $-34.6085$ \\
		\hline 
		$4$ & $-19.4654$ \\
		\hline 
		$5$ & $-12.4579$ \\
		\hline 
		$10$ & $-3.1145$ \\
		\hline 
		$20$ & $-0.7786$ \\
		\hline 
	\end{tabular}
\vspace{0.3cm}
	\caption{Values of $\Theta$ in $(\ref{theta1})$ for $L=2\pi$ and $B>0$.}
	\label{tableL=2pi}
\end{table}

Next, we consider $B<0$ in \eqref{Bcase1} for some values of $b>0$, $k \in \big(0, \sfrac{1}{\sqrt{2}}\big)$ and $\omega \in (-1,1)$. We also have $\Theta<0$, as expected.
\begin{table}[!h]
	\begin{tabular}{|c|c|}
		\hline 
		\multicolumn{2}{|c|}{$k=0.5 : \omega =0$} \\ 
		\hline 
		$b$ & $\Theta$ \\
		\hline 
		$1$ & $-5.3074$ \\
		\hline
		$2$ & $-1.3268$ \\
		\hline 
		$3$ & $-0.5897$ \\
		\hline 
		$4$ & $-0.3317$ \\
		\hline 
		$5$ & $-0.2122$ \\
		\hline 
		$10$ & $-0.0530$ \\
		\hline 
		$20$ & $-0.0132$ \\
		\hline 
	\end{tabular}
	\hspace{0.5cm} 
\begin{tabular}{|c|c|}
	\hline 
	\multicolumn{2}{|c|}{$k=0.3 : \omega =0.5$} \\ 
	\hline 
	$b$ & $\Theta$ \\
	\hline 
	$1$ & $-15.6703$ \\
	\hline
	$2$ & $-3.9175$ \\
	\hline 
	$3$ & $-1.7411$ \\
	\hline 
	$4$ & $-0.9793$ \\
	\hline 
	$5$ & $-0.6268$ \\
	\hline 
	$10$ & $-0.1567$ \\
	\hline 
	$20$ & $-0.0391$ \\
	\hline 
\end{tabular}
	\hspace{0.5cm} 
	\begin{tabular}{|c|c|}
		\hline 
		\multicolumn{2}{|c|}{$k=0.7 : \omega =-0.5$} \\ 
		\hline 
		$b$ & $\Theta$ \\
		\hline 
		$1$ & $-3.3537$ \\
		\hline
		$2$ & $-0.8384$ \\
		\hline 
		$3$ & $-0.3726$ \\
		\hline 
		$4$ & $-0.2096$ \\
		\hline 
		$5$ & $-0.1341$ \\
		\hline 
		$10$ & $-0.0335$ \\
		\hline 
		$20$ & $-0.0083$ \\
		\hline 
	\end{tabular}
	\hspace{0.5cm} 
	\begin{tabular}{|c|c|}
		\hline 
		\multicolumn{2}{|c|}{$k=0.1 : \omega =0.9$} \\ 
		\hline 
		$b$ & $\Theta$ \\
		\hline 
		$1$ & $-311.4599$ \\
		\hline
		$2$ & $-77.8617$ \\
		\hline 
		$3$ & $-34.6085$ \\
		\hline 
		$4$ & $-19.4654$ \\
		\hline 
		$5$ & $-12.4579$ \\
		\hline 
		$10$ & $-3.1145$ \\
		\hline 
		$20$ & $-0.7786$ \\
		\hline 
	\end{tabular}
\vspace{0.3cm}
	\caption{Values of $\Theta$ in $(\ref{theta1})$ for $L=2\pi$ and $B<0$.}
	\label{tableL=2piII}
	\end{table}

Summarizing the arguments above, we can establish the following result.
\begin{lemma}\label{lemma-L1}
Let $(\varphi, \psi) = (\varphi_{b,\omega,k}, B \varphi_{b,\omega,k})$ be the pair of periodic solutions given by Proposition $\ref{cnoidalcurve1}$. The operator $\mathcal{L}_1$ defined in \eqref{L1L2case1} has exactly one negative eigenvalue which is simple and zero is the second eigenvalue which is simple with associated eigenfunction $\varphi'$. Moreover, the remainder of the spectrum is constituted by a discrete set of eigenvalues.
\end{lemma}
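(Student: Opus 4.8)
The plan is to combine the Floquet oscillation theorem with the sign of the Floquet constant $\Theta$ and an isoinertiality argument. The direct computation preceding the statement already gives $\mathcal{L}_1\varphi'=0$, and $\varphi'$ has exactly two zeros in $[0,L)$; by the oscillation characterization recalled from \cite[Theorem 2.1]{MagnusWinkler}, this forces $\lambda=0$ to coincide with either $\lambda_1$ or $\lambda_2$ in the ordered spectrum $\lambda_0<\lambda_1\leq\lambda_2<\cdots$. To decide which case occurs, I would appeal to Proposition \ref{theta}: since $y$ solves the initial value problem \eqref{PVI1} and $\{\varphi',y\}$ is a fundamental set for the Hill equation \eqref{Hillequation1}, the constant $\Theta$ in \eqref{theta1} settles the issue, with $\Theta<0$ yielding $\lambda_1=0$ (so that zero is the second eigenvalue) and $\Theta>0$ giving instead $\lambda_2=0$.

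First I would reduce the determination of $\operatorname{sign}(\Theta)$ to a single parameter value. The family $\{\mathcal{L}_1\}$ is parametrized by $(b,\omega,k)$ ranging over the connected set $(0,+\infty)\times(-1,1)\times(0,\sfrac{1}{\sqrt{2}})$, and because $\lambda=0$ is an eigenvalue for every such parameter, Proposition \ref{isonertialP} together with Remark \ref{rem12} guarantees that the family is isoinertial and, crucially, that the sign of $\Theta$ is constant throughout the parameter range and independent of the fixed period $L$. Consequently it suffices to verify $\Theta<0$ at one representative triple $(b,\omega,k)$ in each of the two cases $B>0$ and $B<0$ coming from \eqref{Bcase1}.

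This verification is carried out numerically in Tables \ref{tableL=1}, \ref{tableL=2pi}, and \ref{tableL=2piII}, which exhibit $\Theta<0$ for a range of sample parameters and both signs of $B$. By the isoinertiality just invoked, $\Theta<0$ for every admissible $(b,\omega,k)$, and Proposition \ref{theta} then yields $\lambda_1=0$. From the strict ordering $\lambda_0<\lambda_1=0$ I conclude that $\mathcal{L}_1$ has exactly one negative eigenvalue, namely the smallest eigenvalue $\lambda_0$, which is always simple for a Hill operator; that $\lambda_1=0$ is simple follows from $\Theta\neq0$ via Proposition \ref{theta}, with associated eigenfunction $\varphi'$; and the simplicity $\lambda_1<\lambda_2$ forces $\lambda_2>0$, so the remainder of the discrete spectrum is positive, which is precisely the assertion of the lemma.

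The main obstacle is that $\Theta$ admits no usable closed form: the second solution $y$ of \eqref{Hillequation1} cannot be written explicitly, so $\Theta=y'(L)/\varphi''(0)$ is inaccessible by hand across the three-parameter family. The isoinertiality of Proposition \ref{isonertialP} is exactly the device that converts the finite numerical evidence at the sample points into the claim $\Theta<0$ for all admissible parameters, since the connectedness of the parameter domain prevents the sign of $\Theta$ from jumping. Once $\operatorname{sign}(\Theta)$ is fixed, the entire spectral picture follows at once from the Floquet framework.
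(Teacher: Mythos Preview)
Your proposal is correct and follows essentially the same route as the paper: both arguments use the Floquet oscillation theorem to place $\lambda=0$ at position $\lambda_1$ or $\lambda_2$, invoke Proposition~\ref{isonertialP} and Remark~\ref{rem12} to reduce the sign determination of $\Theta$ to a finite numerical check, and then appeal to the tables to conclude $\Theta<0$ and hence $\lambda_1=0$. Your write-up is in fact slightly more explicit than the paper in spelling out why isoinertiality over the connected parameter domain allows a single sample point (per sign of $B$) to suffice.
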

\begin{flushright}
	$\blacksquare$
\end{flushright}
\indent By Lemmas \ref{lemma-L1} and \ref{lemma-L2}, we have the following result.

\begin{proposition}\label{simplekernel1}
Let $(\varphi, \psi) = (\varphi_{b,\omega,k}, B \varphi_{b,\omega,k})$ be the pair of periodic solutions given by Proposition $\ref{cnoidalcurve1}$. The operator $\mathcal{L}$ defined in \eqref{operatorL} has exactly one negative simple eigenvalue and zero is a simple eigenvalue with associated eigenfunction $(\varphi', B \varphi')$. Moreover, the remainder of the spectrum is constituted by a discrete set of eigenvalues.
\end{proposition}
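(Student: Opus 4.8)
The plan is to read the spectral picture of $\mathcal{L}$ directly off the block‑diagonalization \eqref{opL-matriz-similar} together with the two lemmas already established for the scalar Hill operators $\mathcal{L}_1$ and $\mathcal{L}_2$. Since the conjugation in \eqref{opL-matriz-similar} is implemented by the bounded, boundedly invertible transformation $UST$ (with $T$ the orthogonal matrix \eqref{matrixT} and $S$ a positive diagonal matrix), Sylvester's law of inertia applies and produces exactly the identities recorded in \eqref{relationnegative}, namely ${\rm n}(\mathcal{L})={\rm n}(\mathcal{L}_1)+{\rm n}(\mathcal{L}_2)$ and ${\rm z}(\mathcal{L})={\rm z}(\mathcal{L}_1)+{\rm z}(\mathcal{L}_2)$. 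In this way the whole question is reduced to counting the non‑positive spectrum of the two one‑dimensional operators.

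Next I would simply substitute the conclusions of the two lemmas. By Lemma \ref{lemma-L2} the spectrum of $\mathcal{L}_2$ consists only of positive eigenvalues, so ${\rm n}(\mathcal{L}_2)=0$ and ${\rm z}(\mathcal{L}_2)=0$. By Lemma \ref{lemma-L1} the operator $\mathcal{L}_1$ has a single negative eigenvalue, which is simple, and $\lambda=0$ is its second eigenvalue, also simple, so ${\rm n}(\mathcal{L}_1)=1$ and ${\rm z}(\mathcal{L}_1)=1$. Plugging these values into \eqref{relationnegative} yields ${\rm n}(\mathcal{L})=1$ and ${\rm z}(\mathcal{L})=1$, which is precisely the assertion that $\mathcal{L}$ has exactly one negative eigenvalue, simple, and that zero is a simple eigenvalue.

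To finish I would identify the kernel explicitly. Differentiating the traveling‑wave system \eqref{SystemEDO1} shows that $\mathcal{L}(\varphi',\psi')=(0,0)$; since $\psi=B\varphi$ we have $\psi'=B\varphi'$, hence $(\varphi',B\varphi')\in{\rm Ker}(\mathcal{L})$. Because we have just established ${\rm z}(\mathcal{L})=1$, the kernel is one‑dimensional and therefore equals $[(\varphi',B\varphi')]$, as claimed. Finally, both $\mathcal{L}_1$ and $\mathcal{L}_2$ are Hill operators on $[0,L]$ with smooth $L$‑periodic potentials, hence have compact resolvent and purely discrete spectrum; as $\sigma(\mathcal{L})=\sigma(\mathcal{L}_1)\cup\sigma(\mathcal{L}_2)$, the remainder of $\sigma(\mathcal{L})$ is a discrete set of eigenvalues.

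I do not expect a genuine obstacle here, since all the analytic work is carried out in Lemmas \ref{lemma-L1} and \ref{lemma-L2}; the proposition is essentially a bookkeeping consequence of the inertia identity \eqref{relationnegative}. The only point deserving a moment of care is the legitimacy of invoking Sylvester's law through \eqref{opL-matriz-similar}: one must verify that the conjugation is realized by a bounded, boundedly invertible congruence, so that the inertia (and not merely the location of the essential spectrum) is genuinely preserved and \eqref{relationnegative} holds. Given the explicit constant‑coefficient form of $U$, $S$ and $T$, this is straightforward to confirm.
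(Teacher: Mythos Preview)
Your argument is correct and matches the paper's approach exactly: the proposition is stated as an immediate consequence of Lemmas \ref{lemma-L1} and \ref{lemma-L2} together with the inertia identity \eqref{relationnegative} coming from the congruence \eqref{opL-matriz-similar}. In fact you have spelled out more of the bookkeeping (the explicit kernel identification and the discreteness of the spectrum) than the paper does, but there is no methodological difference.
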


\begin{flushright}
	$\blacksquare$
\end{flushright}

\subsection{Spectral Analysis for the Case 2: $a+b = \tfrac{1}{6}$.}\label{EspectralAnalysisCase2} Recall in this case that $B=\pm \sqrt{2}$. We consider first $B=\sqrt{2}$ to write
\begin{equation}\label{operatorLcase2}
\mathcal{L}=(\mathcal{S}T)^* \begin{pmatrix}
\mathcal{L}_3 & 0 \\ 
0 &  \mathcal{L}_4
\end{pmatrix}(\mathcal{S}T)
\end{equation}
where $T$ is the matrix given by \eqref{matrixT}, $\mathcal{S}$ is defined as
\begin{equation}\label{matrixScase2}
\mathcal{S}= \tfrac{1}{\sqrt{6}} \begin{pmatrix}
\sqrt{3+2\sqrt{2}} & \sqrt{3-2\sqrt{2}} \\ 
-\sqrt{3-2\sqrt{2}} & \sqrt{3+2\sqrt{2}}
\end{pmatrix} 
\end{equation}
and $\mathcal{L}_3, \mathcal{L}_4: H^2_{per} \subset L^2_{per} \rightarrow L^2_{per}$ are the Hill operators 
\begin{equation}\label{L1L2case2}
	\mathcal{L}_3=-a\partial_x^2+1+2\varphi \quad \text{and} \quad \mathcal{L}_4=-a\partial_x^2+1-\varphi.
\end{equation}

\indent For the case $B=-\sqrt{2}$, we have the same representation of $\mathcal{L}$ as in \eqref{operatorLcase2} with
\begin{equation}\label{matrixScase2-2}
\mathcal{S}= \tfrac{1}{\sqrt{6}} \begin{pmatrix}
\sqrt{3-2\sqrt{2}} & \sqrt{3+2\sqrt{2}} \\ 
\sqrt{3+2\sqrt{2}} & -\sqrt{3-2\sqrt{2}}
\end{pmatrix}. 
\end{equation}
In both cases for $B$, it is easy to see that $\mathcal{S}T$ is an orthogonal matrix, so that 
\begin{equation}\label{STorthogonal}
	(\mathcal{S}T)^*=(\mathcal{S}T)^{-1}.
\end{equation}

Operators $\mathcal{L}_3$ and $\mathcal{L}_4$ in \eqref{L1L2case2} play an important role in the spectral analysis since we have 
\begin{equation}\label{relationnegative1}
	{\rm n}(\mathcal{L})= {\rm n}(\mathcal{L}_3)+{\rm n}(\mathcal{L}_4) \quad \text{and} \quad    {\rm z}(\mathcal{L})= {\rm z}(\mathcal{L}_3)+{\rm z}(\mathcal{L}_4).
\end{equation}

To establish our orbital stability result in the next section and since we do not have a smooth curve of periodic waves depending on the wave speed $\omega$ as in the case $a+b=0$, we need to do some useful simplifications in our approach. In fact, by assuming that $A_2 =0$ in \eqref{SystemEDO3}, we automatically have $A_1 = 0$. In this case, $a < 0$ is given as function of the modulus $k\in (0,1)$  
\begin{equation}\label{relationa}
a=- \frac{1}{16}\,{\frac {{L}^{2}}{ {\rm K} \left( k \right)^{2}\sqrt {{k}^{4}-{k}^{2}+1}}}<0,
\end{equation}
so that the smooth surface in Proposition $\ref{cnoidalcurve2}$ is a smooth curve depending on the modulus $k$. Next, by \eqref{relationa} we also have
$
b=\frac{1}{6}+ \frac{1}{16}\,{\frac {{L}^{2}}{ {\rm K} \left( k \right)^{2}\sqrt {{k}^{4}-{k}^{2}+1}}}>0.
$

\begin{remark}\label{relationsab}
Notice that by considering $a+b = \tfrac{1}{6}$, we obtain the surface tension $\tau = 0$. In this case, parameters $a,b,c,d$ satisfy the equalities given in \eqref{relationabcd1} for $\theta^2 = \tfrac{2}{3} $. As consequence of the first identity in \eqref{relationabcd2}, we have $a=\tfrac{\nu}{6}$ and
\begin{equation}\label{relation-nu}
\nu= \nu_k=-\frac{3}{8}\frac{L^2}{{\rm K}(k)^2 \,\sqrt{k^4-k^2+1}}.
\end{equation}
In addition, we obtain in this case that $\varphi<0$.
\end{remark}

From Remark $\ref{relationsab}$, we have obtained that $\varphi < 0$ for all $k \in (0,1)$, so that $\mathcal{L}_4$ is a positive operator. This fact implies that ${\rm n}(\mathcal{L}_4)=0$ and ${\rm Ker}(\mathcal{L}_4) = \{0\}$. 

\begin{lemma}\label{lemma-L4}
	Let $(\varphi, \psi) = (\varphi_{k}, B \varphi_{k})$ be the pair of periodic solution given by Proposition $\ref{cnoidalcurve2}$ with $A_1 = A_2 = 0$. The spectrum of $\mathcal{L}_4$ in \eqref{L1L2case1} is only constituted by a discrete set of positive eigenvalues.
\end{lemma}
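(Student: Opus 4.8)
The plan is to show directly that $\mathcal{L}_4$ is a strictly positive operator by inspecting its associated quadratic form, and then to invoke the Floquet framework of Section \ref{Floquet} to conclude that its spectrum is a discrete set of eigenvalues, all of which must then be positive. First I would record the two sign conditions that drive everything. Since $a<0$ we have $-a>0$, so the principal part $-a\partial_x^2$ is a nonnegative operator on $L^2_{per}$ (the periodic operator $-\partial_x^2$ has nonnegative spectrum). Moreover, by Remark \ref{relationsab} the profile obtained under $A_1=A_2=0$ satisfies $\varphi<0$ for every $k\in(0,1)$, whence $1-\varphi>1>0$ pointwise.

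With these at hand, for any $f\in H^2_{per}$ with $f\neq0$ an integration by parts gives
\begin{equation*}
(\mathcal{L}_4 f, f)_{L^2_{per}} = -a\,\|f'\|^2_{L^2_{per}} + \int_0^L (1-\varphi)\,f^2\,dx \geq \int_0^L f^2\,dx = \|f\|^2_{L^2_{per}} > 0,
\end{equation*}
using $-a\geq 0$ and $1-\varphi\geq 1$. Thus the quadratic form of $\mathcal{L}_4$ is strictly positive. This immediately yields ${\rm n}(\mathcal{L}_4)=0$, since no subspace on which the form is negative can exist, and it also forces ${\rm Ker}(\mathcal{L}_4)=\{0\}$, because any kernel element would be a nonzero $f$ with $(\mathcal{L}_4 f, f)_{L^2_{per}}=0$, contradicting the strict inequality.

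Finally, $\mathcal{L}_4=-a\partial_x^2+(1-\varphi)$ is a Hill operator with smooth $L$-periodic potential; after normalizing the leading coefficient (dividing through by the positive constant $-a$), the Floquet theory recalled in Section \ref{Floquet}, namely Theorem 2.1 of \cite{MagnusWinkler}, guarantees that its spectrum is an unbounded, increasing sequence of real eigenvalues with no continuous part. Combining this discreteness with the positivity of the quadratic form, every eigenvalue is strictly positive, which is exactly the assertion of the lemma.

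I do not expect a genuine obstacle here: the entire argument rests on the pointwise positivity $1-\varphi>0$ together with $-a>0$, both already established. The only point requiring mild care is confirming that the Floquet framework applies after rescaling the coefficient of $\partial_x^2$, but since $-a$ is merely a positive constant this rescaling preserves both periodicity and self-adjointness and is therefore harmless.
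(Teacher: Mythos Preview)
Your proposal is correct and follows essentially the same approach as the paper: both rely on $a<0$ and $\varphi<0$ (from Remark~\ref{relationsab}) to conclude that $\mathcal{L}_4$ is a strictly positive operator, whence ${\rm n}(\mathcal{L}_4)=0$ and ${\rm Ker}(\mathcal{L}_4)=\{0\}$. You simply make explicit the quadratic-form computation and the appeal to Floquet theory for discreteness that the paper leaves implicit.
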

\begin{flushright}
	$\blacksquare$
\end{flushright}

\indent Next, we analyze the spectrum of the operator $\mathcal{L}_3$. Using the first equation of \eqref{SystemEDO1} and the fact that $\omega=0$, we see that $\mathcal{L}_3\varphi' = 0$ and $\lambda = 0$ is an eigenvalue of $\mathcal{L}_3$. Since $\varphi'$ has two zeros in $[0,L)$, we deduce from the Floquet theory that $\lambda=0$  is the second or the third eigenvalue of $\mathcal{L}_3$. We show actually $\lambda=0$ is the second eigenvalue of $\mathcal{L}_3$, so that $\mathcal{L}_3$ has only one negative eigenvalue which is clearly simple. Let $y \in C_b^\infty([0,L])$ be a solution satisfying the Hill equation
\begin{equation}\label{Hillequation2}
	-y'' + y + 2 \varphi y = 0,
\end{equation}
with $\{\varphi',y\}$ being the fundamental set of solutions for \eqref{Hillequation2}. Since $\varphi'$ is an odd function, one has that $y$ is even and it satisfies the following initial value problem
\begin{equation}\label{PVI2}
	\left\{ 
	\begin{array}{l}
		-y'' +  y +2\varphi y = 0, \\
	 y(0) = -\frac{1}{\varphi''(0)}, \\
		 y'(0) = 0.
\end{array}
\right.
\end{equation}
The constant $\Theta$ is also given by 
\begin{equation}\label{theta2}
	\Theta = \frac{y'(L)}{\varphi''(0)}.
\end{equation}



The focus now is to analyze the behaviour of the non-positive spectrum of $\mathcal{L}_3$. First, we see that it depends only on the parameter $k \in (0, 1)$  
and thus, to evaluate the sign of the constant $\Theta$ in order to guarantee that $\lambda = 0$ is the second eigenvalue, we need to use that the family of operators 
$\left\{\mathcal{L}_3= \mathcal{L}_{3,k};\ k \in (0,1) \right\}$ is isoinertial. To do so, we employ the use of  \textit{software Mathematica} to solve the initial value problem \eqref{PVI2}. The corresponding values of $\Theta$ are listed in the next tables.

\begin{table}[!h]
	\begin{tabular}{|c|c|}
		\hline 
		\multicolumn{2}{|c|}{$L=1$} \\ 
		\hline 
		$k$ & $\Theta$ \\
		\hline 
		$0.01$ & $-0.0507$ \\
		\hline
		$0.1$ & $-0.0211$ \\
		\hline 
		$0.3$ & $-0.0213$ \\
		\hline 
		$0.5$ & $-0.0230$ \\
		\hline 
		$0.7$ & $-0.0324$ \\
		\hline 
		$0.9$ & $-0.1227$ \\
		\hline 
		$0.99$ & $-3.6893$ \\
		\hline
			\end{tabular}
	\hspace{0.5cm} 
	\begin{tabular}{|c|c|}
		\hline 
		\multicolumn{2}{|c|}{$L=2\pi$} \\ 
		\hline 
		$k$ & $\Theta$ \\
		\hline 
		$0.01$ & $-10.0817$ \\
		\hline
		$0.1$ & $-5.2367$ \\
		\hline 
		$0.3$ & $-5.2873$ \\
		\hline 
		$0.5$ & $-5.7201$ \\
		\hline 
		$0.7$ & $-8.0607$ \\
		\hline 
		$0.9$ & $-30.4458$ \\
		\hline 
		$0.99$ & $-915.132$ \\
		\hline 
	\end{tabular}
	\hspace{0.5cm} 
	\begin{tabular}{|c|c|}
		\hline 
		\multicolumn{2}{|c|}{$L=50$} \\ 
		\hline 
	$k$ & $\Theta$ \\
	\hline 
	$0.01$ & $-2.96 \times 10^3$ \\
	\hline
	$0.1$ & $-2.63 \times 10^3$ \\
	\hline 
	$0.3$ & $-2.66 \times 10^3$ \\
	\hline 
	$0.5$ & $-2.88\times 10^3$ \\
	\hline 
	$0.7$ & $-4.06 \times 10^3$ \\
	\hline 
	$0.9$ & $-1.53 \times 10^4$ \\
	\hline
	$0.99$ & $-4.60 \times 10^5$ \\
	\hline 
	\end{tabular}
	\hspace{0.5cm} 
	\begin{tabular}{|c|c|}
		\hline 
		\multicolumn{2}{|c|}{$L=100$} \\ 
		\hline 
		$k$ & $\Theta$ \\
		\hline 
		$0.01$ & $-2.97 \times 10^3$ \\
		\hline
		$0.1$ & $-2.11 \times 10^3$ \\
		\hline 
		$0.3$ & $-2.13 \times 10^3$ \\
		\hline 
		$0.5$ & $-2.30 \times 10^3$ \\
		\hline 
		$0.7$ & $-3.24 \times 10^4 $ \\
		\hline  
		$0.9$ & $-1.22 \times 10^5$ \\
		\hline
		$0.99$ & $-3.68 \times 10^6$ \\
		\hline 
	\end{tabular}
\end{table}

\indent Thanks to Proposition \ref{isonertialP} and Remark $\ref{rem12}$, we see that ${\rm in}(\mathcal{L}_3) = (1,1)$, that is,
\begin{equation}
{\rm n}(\mathcal{L}_3) = 1 \quad \text{ and } \quad {\rm z}(\mathcal{L}_3) = 1.
\end{equation} Summarizing the results obtained above, we have the following lemma.

\begin{lemma}\label{lemma-L3}
	Let $(\varphi, \psi) = (\varphi_{k}, B \varphi_{k})$ be the pair of periodic solutions given by Proposition $\ref{cnoidalcurve2}$ with $A_1=A_2=0$. The operator $\mathcal{L}_3$ defined in \eqref{L1L2case2} has exactly one negative eigenvalue which is simple and zero is the second eigenvalue which is simple with associated eigenfunction $\varphi'$. Moreover, the remainder of the spectrum is constituted by a discrete set of eigenvalues.
	\end{lemma}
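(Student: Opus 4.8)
The plan is to pin down the exact position of the zero eigenvalue in the Floquet spectrum of $\mathcal{L}_3$, using the data already assembled above. We know from the first equation of \eqref{SystemEDO1} (with $\omega=0$) that $\mathcal{L}_3\varphi'=0$, so $\lambda=0$ is an eigenvalue with eigenfunction $\varphi'$; since $\varphi'$ vanishes exactly twice on $[0,L)$, the Floquet characterization recalled after \eqref{operatorP} (see \cite[Theorem 2.1]{MagnusWinkler}) forces $\lambda=0$ to be either $\lambda_1$ or $\lambda_2$. Because $\mathcal{L}_3$ is a Hill operator, its spectrum is automatically an unbounded increasing sequence of eigenvalues, which settles the last assertion of the lemma. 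Hence everything reduces to deciding whether $\lambda_1=0$ or $\lambda_2=0$, which by Proposition \ref{theta} is equivalent to determining the sign of the Floquet constant $\Theta$ in \eqref{theta2}: the case $\Theta<0$ gives $\lambda_1=0$ (one simple negative eigenvalue, as claimed), whereas $\Theta>0$ would give $\lambda_2=0$.

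To compute $\Theta$ I would solve the initial value problem \eqref{PVI2} for the even second solution $y$ of the Hill equation \eqref{Hillequation2} and evaluate $\Theta=y'(L)/\varphi''(0)$. Since writing $y$ in closed form for the cnoidal profile \eqref{cnoidalsolution} is unwieldy, the key simplification is to exploit the isoinertial structure. As $\mathcal{L}_{3,k}\varphi_k'=0$ for every $k\in(0,1)$ and the potential depends $C^1$ on $k$, Proposition \ref{isonertialP} together with Remark \ref{rem12} ensures that the family $\{\mathcal{L}_{3,k}\}_{k\in(0,1)}$ is isoinertial and that the sign of $\Theta$ is independent both of $k$ and of the period $L$. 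Consequently it suffices to evaluate $\Theta$ at a single representative pair $(k,L)$.

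I would then evaluate \eqref{theta2} numerically (the tables above, produced with Mathematica, record these values): every entry is strictly negative. By the isoinertial reduction this yields $\Theta<0$ for all $k\in(0,1)$, whence $\mathrm{in}(\mathcal{L}_3)=(1,1)$. Proposition \ref{theta} then gives $\lambda_1=0$, so that $\lambda_0<\lambda_1=0<\lambda_2\le\cdots$. Thus $\mathcal{L}_3$ has a single, simple negative eigenvalue $\lambda_0$, zero is the simple second eigenvalue with eigenfunction $\varphi'$, and the rest of the spectrum is a discrete set of positive eigenvalues, which is precisely the statement.

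The genuine obstacle is upgrading the finitely many numerical evaluations of $\Theta$ to the claim that $\Theta<0$ for every $k\in(0,1)$. This is exactly what the isoinertial principle handles: the sign of $\Theta$ cannot flip along the continuous family without passing through $\Theta=0$, which by Proposition \ref{theta} would make $\lambda=0$ a double eigenvalue and hence change $\mathrm{z}(\mathcal{L}_{3,k})$, contradicting the constancy of $\mathrm{in}(\mathcal{L}_{3,k})$ guaranteed by Proposition \ref{isonertialP}. A purely analytic alternative would identify $\mathcal{L}_3$ with a Lam\'e-type operator and read the eigenvalue ordering off its explicit spectrum, but the isoinertial argument makes this unnecessary.
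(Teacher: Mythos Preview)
Your argument is correct and follows essentially the same route as the paper: you place $0$ among $\lambda_1,\lambda_2$ via the zero count of $\varphi'$, invoke Proposition~\ref{theta} to reduce to the sign of $\Theta$, use Proposition~\ref{isonertialP} and Remark~\ref{rem12} for the isoinertial reduction, and appeal to the numerical tables to conclude $\Theta<0$. Your final paragraph explaining why the isoinertial principle legitimately upgrades finitely many numerical checks to all $k\in(0,1)$ is a welcome clarification that the paper leaves implicit.
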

\begin{flushright}
	$\blacksquare$
\end{flushright}
\indent Gathering the results of Lemmas \ref{lemma-L4} and \ref{lemma-L3}, we are in position to establish the following theorem.

\begin{proposition}\label{simplekernel2}
Let $(\varphi, \psi) = (\varphi_{k}, B \varphi_{k})$ be the pair of periodic solutions given by Proposition $\ref{cnoidalcurve2}$ with $A_1=A_2=0$. The operator $\mathcal{L}$ defined in \eqref{operatorL} has exactly one negative simple eigenvalue and zero is a simple eigenvalue with associated eigenfunction $(\varphi', B \varphi')$. Moreover, the remainder of the spectrum is constituted by a discrete set of eigenvalues.
\end{proposition}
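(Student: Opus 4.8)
The plan is to read off the conclusion directly from the block-diagonalization \eqref{operatorLcase2} combined with Lemmas \ref{lemma-L3} and \ref{lemma-L4}, with no new analysis required. First I would emphasize the point on which everything hinges: because $\mathcal{S}T$ is orthogonal by \eqref{STorthogonal}, the factorization $\mathcal{L}=(\mathcal{S}T)^{-1}\,{\rm diag}(\mathcal{L}_3,\mathcal{L}_4)\,(\mathcal{S}T)$ is simultaneously a similarity \emph{and} a congruence. A similarity alone would only identify the spectrum of $\mathcal{L}$ with that of ${\rm diag}(\mathcal{L}_3,\mathcal{L}_4)$ as a set; it is the congruence (orthogonal conjugation) that lets Sylvester's law of inertia apply and thereby preserves the number of negative eigenvalues and the dimension of the kernel. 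This is exactly the content of \eqref{relationnegative1}, namely ${\rm n}(\mathcal{L})={\rm n}(\mathcal{L}_3)+{\rm n}(\mathcal{L}_4)$ and ${\rm z}(\mathcal{L})={\rm z}(\mathcal{L}_3)+{\rm z}(\mathcal{L}_4)$.

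Next I would substitute the spectral counts established in the two lemmas. Lemma \ref{lemma-L4} states that $\mathcal{L}_4$ is positive, so ${\rm n}(\mathcal{L}_4)=0$ and ${\rm z}(\mathcal{L}_4)=0$; Lemma \ref{lemma-L3} states that $\mathcal{L}_3$ has exactly one negative eigenvalue, which is simple, and that zero is a simple eigenvalue, so ${\rm n}(\mathcal{L}_3)=1$ and ${\rm z}(\mathcal{L}_3)=1$. Feeding these into \eqref{relationnegative1} gives ${\rm n}(\mathcal{L})=1$ and ${\rm z}(\mathcal{L})=1$, so that $\mathcal{L}$ possesses exactly one negative eigenvalue, which is simple, and zero is a simple eigenvalue. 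To name the kernel explicitly I would recall that $(\varphi',\psi')=(\varphi',B\varphi')$ already lies in ${\rm Ker}(\mathcal{L})$: differentiating the profile system \eqref{SystemEDO1} (with $\omega=0$ in this case) yields $\mathcal{L}(\varphi',\psi')=(0,0)$, as observed just before \eqref{similartransformation1}. Since ${\rm z}(\mathcal{L})=1$, the kernel is one-dimensional and hence spanned by $(\varphi',B\varphi')$. The final assertion, that the remainder of the spectrum is a discrete set of eigenvalues, is inherited from the analogous property of the Hill operators $\mathcal{L}_3$ and $\mathcal{L}_4$, each of which has compact resolvent and therefore purely discrete spectrum; this discreteness is transported unchanged through the orthogonal conjugation.

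There is essentially no analytical obstacle left at this stage, since all the delicate work has already been carried out in Lemmas \ref{lemma-L3} and \ref{lemma-L4}: the positivity of $\mathcal{L}_4$ (guaranteed by $\varphi<0$, via Remark \ref{relationsab}) and the precise location of the zero eigenvalue of $\mathcal{L}_3$ as its \emph{second} eigenvalue, determined through the sign of the Floquet constant $\Theta$ in \eqref{theta2} together with the isoinertial family argument of Proposition \ref{isonertialP} and Remark \ref{rem12}. The only genuine step requiring care is precisely the one highlighted above, namely verifying that orthogonality \eqref{STorthogonal} upgrades the bare similarity into a congruence so that Sylvester's law legitimately produces \eqref{relationnegative1}; once this is in place, the proposition follows by the direct arithmetic $1+0=1$ and $1+0=1$ for the negative-eigenvalue count and the kernel dimension, respectively.
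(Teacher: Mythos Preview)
Your proposal is correct and matches the paper's approach exactly: the paper states the proposition immediately after Lemmas \ref{lemma-L4} and \ref{lemma-L3} with the remark ``Gathering the results of Lemmas \ref{lemma-L4} and \ref{lemma-L3}'' and no further argument, relying on the factorization \eqref{operatorLcase2}, the orthogonality \eqref{STorthogonal}, and the identity \eqref{relationnegative1} just as you do. One minor comment: similarity alone already preserves eigenvalues with multiplicity, so your insistence that congruence is what is needed to obtain \eqref{relationnegative1} is slightly overstated, though harmless here since the orthogonal conjugation is both.
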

\begin{flushright}
	$\blacksquare$
\end{flushright}

\begin{remark}
We got some difficulties in studying the problem concerning the orbital stability concerning non-multiple solutions as in $(\ref{LIcn})$ since the spectral analysis in this case is too hard to handle. As far as we know, it is a hard task to  obtain a similar transformation as in \eqref{similartransformation1} in order to determine that ${\rm n}(\mathcal{L})=1$ and ${\rm z}(\mathcal{L})=1$. The multiple solutions $(\varphi,\psi)=(\varphi,B\varphi)$ are useful since $\mathcal{L}$ is similar to a diagonal matrix formed by well-known Hill operators with periodic potential. Moreover, we can study the inertial index of $\mathcal{L}$ using the non-multiple solution in $(\ref{LIcn})$ using the quadratic form associated to the operator $\mathcal{L}$ in \eqref{operatorL} given by
$$\begin{array}{llll}
\displaystyle \big(\mathcal{L}(f,g), (f,g)\big)_{L_{per}^2\times L_{per}^2}&= (\widetilde{\mathcal{L}}_1\, f,f)_{L_{per}^2}+(\widetilde{\mathcal{L}}_2\,g, g)_{L_{per}^2}\\
&\displaystyle+b \, \omega \int_0^L [(f'-g')(x)]^2dx+\int_0^L(\omega-\psi)(x)[(f-g)(x)]^2dx,
\end{array}$$
where $\widetilde{\mathcal{L}}_1=-(b\omega-c)(-\partial_x^2)+(1-\omega)-\psi$ and $\widetilde{\mathcal{L}}_2=-(b\omega-a)(-\partial_x^2)+(1-\omega)-\psi+\varphi$. Even in the case $b=d$, we are not able to determine the non-positive spectrum of $\mathcal{L}$ by studying the non-positive spectrum of $\widetilde{\mathcal{L}}_1$ and $\widetilde{\mathcal{L}}_2$. One of the reasons is that $(\varphi',\psi')\in \Ker(\mathcal{L})$ but we don't know how to determine $\Ker(\widetilde{\mathcal{L}}_i)$ for $i=1,2$.
\end{remark}

\section{Orbital Stability}\label{stability-section}

In this section, we present our orbital stability result. Before presenting our results, we need some useful notations. It is well known that (\ref{SystemEDO1}) is invariant under translations in the sense that if $V = (\eta, u) \in X$ is a solution for $(\ref{abcd})$, we have
\begin{equation}
{\rm T}_r V = (\eta(\cdot+r),u(\cdot+r)),
\end{equation}
is also a solution for all $r\in\mathbb{R}$.\\
\indent In what follows, let $L>0$ be fixed. We now present our notion of orbital stability as in \cite{GrillakisShatahStraussI}.

\begin{definition}\label{defstab}
 The periodic wave $\Psi=(\varphi,B\varphi)$ is orbitally stable in $X$ if for all $\varepsilon > 0$, there exists $\delta > 0$ with the following property: if
 \begin{equation*}
 	\|(\eta_0, u_0) - (\varphi, \psi)\|_X < \delta,
 \end{equation*}
and $V(t) = (\eta(t), u(t))$ is a solution of \eqref{abcd} in some interval $[0,t_0)$ with $V(0)=(\eta_0,u_0)$, then $V(t)$ can be continued to a solution in $[0,+\infty)$ and
\begin{equation*}
	 \sup_{t\geq0}\inf_{r \in \mathbb{R}} \|V(t) - {\rm T}_r \Psi\|_X < \varepsilon.
\end{equation*}
Otherwise, $\Psi$ is said to be orbitally unstable. 
\end{definition}



\subsection{Case 1: $a+b = 0$.}\label{firstcase} Proof of Theorem $\ref{mainT}$-{\rm (i)}.\\
\indent We follow the arguments in \cite[Theorem 1.3]{andrade}. In fact, the construction of periodic waves has been determined in Proposition $\ref{cnoidalcurve1}$ and the spectral analysis was established in Proposition $\ref{simplekernel1}$. Both two facts give us assumption (H1). To prove (H2), let us consider the conserved functional
\begin{equation}\label{conserQ}
	Q(\eta, u )= F(\eta, u ) + (\partial_\omega A_2) M_1(\eta, u)+(\partial_\omega A_2) M_2(\eta, u),
\end{equation}
where $F$, $M_1$ and $M_2$ are conserved quantities defined in \eqref{F} and \eqref{M1M2}.

\begin{remark}\label{rem1234} It is important to be highlighted that in \cite{andrade} the authors construct periodic waves depending on the modulus $k\in(0,1)$. In our case, the periodic waves are depending on three parameters $(b,\omega,k)$, so that in assumption (H4) of \cite{andrade} the terms where appears the derivative in terms of $k$ need to be replaced by the derivative with respect to $\omega$. This fact gives us that $Q$ in $(\ref{conserQ})$ is suitable to be used to our stability approach. In addition, a rudimentary calculation shows that $2F(\varphi,\psi)+(\partial_\omega A_2) M_1(\varphi, \psi)+(\partial_\omega A_2) M_2(\varphi, \psi)\neq0$ and condition (H4) in \cite{andrade} is satisfied.
	\end{remark}
	
Let us turn back to the proof of the theorem. Indeed, we see that
\begin{eqnarray}\label{Phi1}
	Q'(\Psi) = Q'(\varphi, \psi)= \begin{pmatrix}
		\psi-b\psi'' + \partial_\omega A_2 \\ 
		\varphi-b\varphi''+\partial_\omega A_1
	\end{pmatrix},
\end{eqnarray}
where $A_1, A_2$ are given by \eqref{A1case1} and \eqref{A2case1}, respectively. Thus, if we define
$
\Phi= \frac{\partial}{\partial \omega}(\varphi, \psi)= \left( \begin{array}{l}  \partial_\omega \varphi \\ \partial_\omega  \psi
\end{array}
\right),
$
with $\psi= B \varphi$ and $B$ given in \eqref{Bcase1}, we see that $\mathcal{L}\Phi=Q'(\Psi)$. In addition, we obtain since $\mathcal{L}$ is a self-adjoint operator and ${\rm Ker}(\mathcal{L}) = [(\varphi',\psi')]$ that
$
	(\mathcal{L} \Phi, \Psi')_{L_{per}^2\times L_{per}^2} = (\mathcal{L}\, \Phi, (\varphi', \psi'))_{L_{per}^2\times L_{per}^2}=(\Phi, \mathcal{L}(\varphi',\psi'))_{L_{per}^2\times L_{per}^2}=0.
$ To conclude the orbital stability, it remains to prove  $I=\big(\mathcal{L}\Phi, \Phi \big)_{L_{per}^2\times L_{per}^2}<0$. Indeed, we have
\begin{equation}\label{quantityIcase1}
I  =  \frac{\partial}{\partial \omega} \int_0^L \varphi \psi \; dx- b \frac{\partial}{\partial \omega} \int_0^L \psi'' \varphi \; dx+ I_1+I_2= I_0+I_1+I_2,
\end{equation}
where
$$
I_0= \frac{\partial}{\partial \omega} \left( B\, \|\varphi\|_{L_{per}^2}^2+b\, B\,\|\varphi'\|_{L_{per}^2}^2\right),  \quad   I_1=( \partial_\omega A_2, \partial_{\omega} \varphi)_{L_{per}^2} \quad \text{and} \quad  I_2=( \partial_\omega A_1, \partial_{\omega} \psi)_{L_{per}^2}.
$$

Using the explicit form of the cnoidal solution in \eqref{cnoidalsolution} for the pair $(\varphi, \psi)=(\varphi, B \varphi)$, we obtain
\begin{eqnarray}
I_0=\frac{\partial}{\partial \omega} \left( BLb_0^2+2Bb_0b_2 J_1    +Bb_2^2 J_2+\frac{16\,b_2^2 \, b B}{L^2}{\rm K}(k)^2 J_3         \right),
\end{eqnarray}
where
\begin{equation}\label{J1J2J3}
\begin{array}{llll}
 & &\displaystyle J_1= \int_0^L {\rm cn}^2\left(\frac{2{\rm K}(k)x}{L}, k\right)\; dx, \quad J_2= \int_0^L {\rm cn}^4\left(\frac{2{\rm K}(k)x}{L}, k\right)\; dx, \\\\
& & \displaystyle J_3=\int_0^L {\rm cn}^2\left(\frac{2{\rm K}(k)x}{L}, k\right){\rm dn}^2\left(\frac{2{\rm K}(k)x}{L}, k\right){\rm sn}^2\left(\frac{2{\rm K}(k)x}{L}, k\right)\; dx.
\end{array}
\end{equation}
Also,
\begin{equation}\label{I1+I2}
I_1+I_2= \left(Lb  \partial_\omega b_0+  B J_1 \partial_\omega b_2 +L b_0 \partial_\omega B + b_2 J_1  \partial_\omega B \right) \partial_\omega A_1 + \left(L \partial_\omega b_0+ J_1 \partial_\omega b_2 \right)\partial_\omega A_2.
\end{equation}

By using \cite[Formulas (312.02), (312.04) and (361.04)]{ByrdFriedman}, it is possible to calculate $J_i$, $i = 1,2,3$ in \eqref{J1J2J3}, as well as, the explicit parameters $b_0$, $b_2$, $B$, $A_1$ and $A_2$ given in \eqref{b0case1}-\eqref{A2case1} to determine the sign of $I$. Using numerics, we can also deduce the existence of
$
	b^* \simeq \frac{L^2}{591.94} > 0
$ such that for all $b > b^*$, we have
$
	I = (\mathcal{L}\Phi, \Phi)_{L_{per}^2\times L_{per}^2} < 0.
$

In the next figures, we have two set of plots which give us the behaviour of $I$ for $B>0$. In all plots of type (a), we obtain the plot of the quantity $I$ by fixing $L$, $b$ and $\omega$, so that $I$ is given only in terms of $k$. In all graphics of type (b), we give the behaviour of $I$ in terms of $\omega$ by fixing $L$, $b$ and $k$. In all graphics of type (c), we plot the $3$-D graphic in terms of $\omega$ and $k$ by fixing $L$ and $b$. As far as we can see, the behaviour remains the same all three graphics of type (a), (b) and (c). The important fact is that in all graphics, we can conclude $I<0$ for all $b > b^*$, $\omega\in (-1,1)$ and $k\in\big(0,\sfrac{1}{\sqrt{2}}\big).$


\begin{figure}[!h]
	\subfigure[$b=0.002 : \omega=0$]{
		\includegraphics[width=4cm,height=5cm]{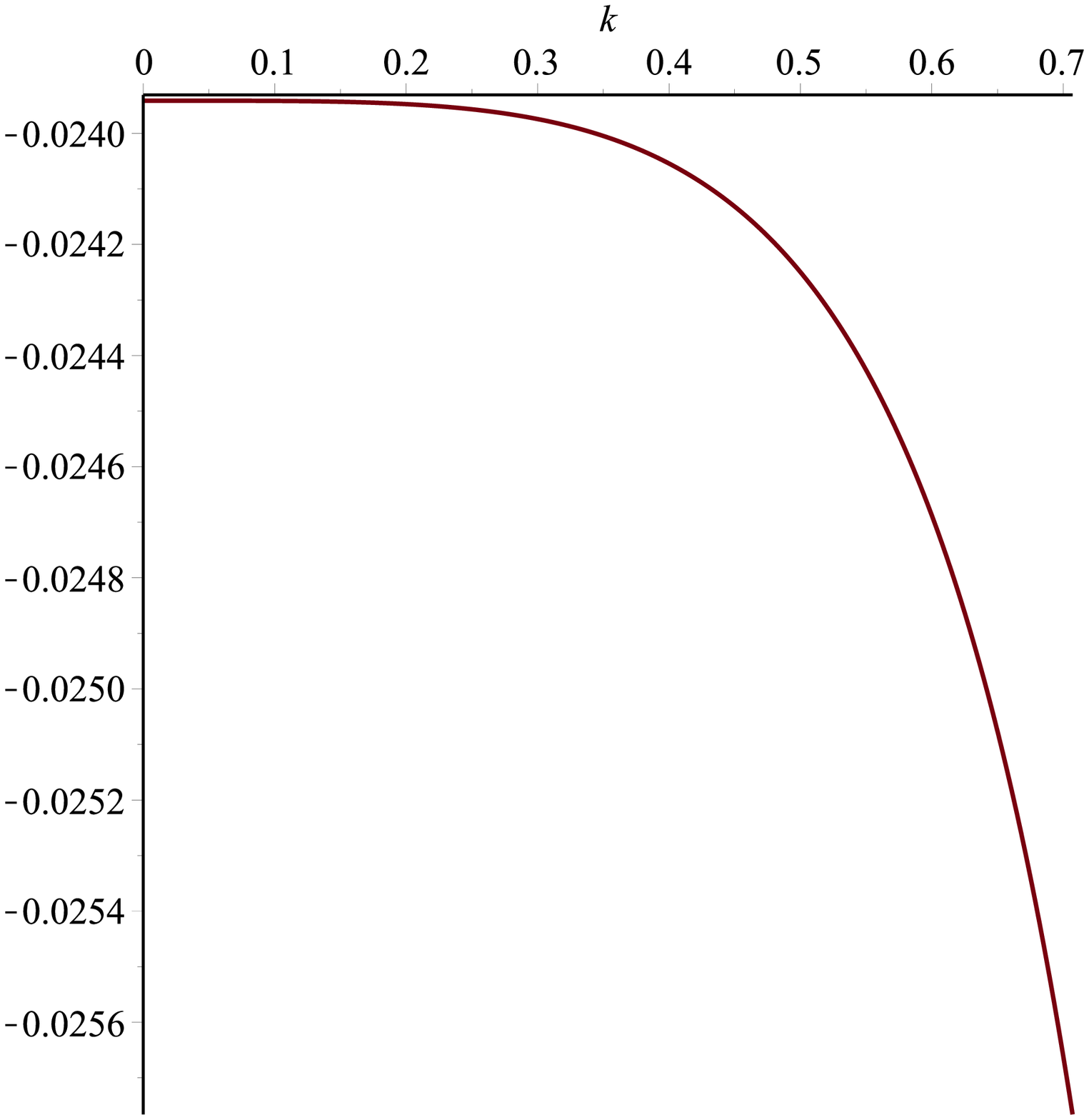}}
	\quad
	\subfigure[$b=0.002 : k = 0.3$]{
		\includegraphics[width=4cm,height=5cm]{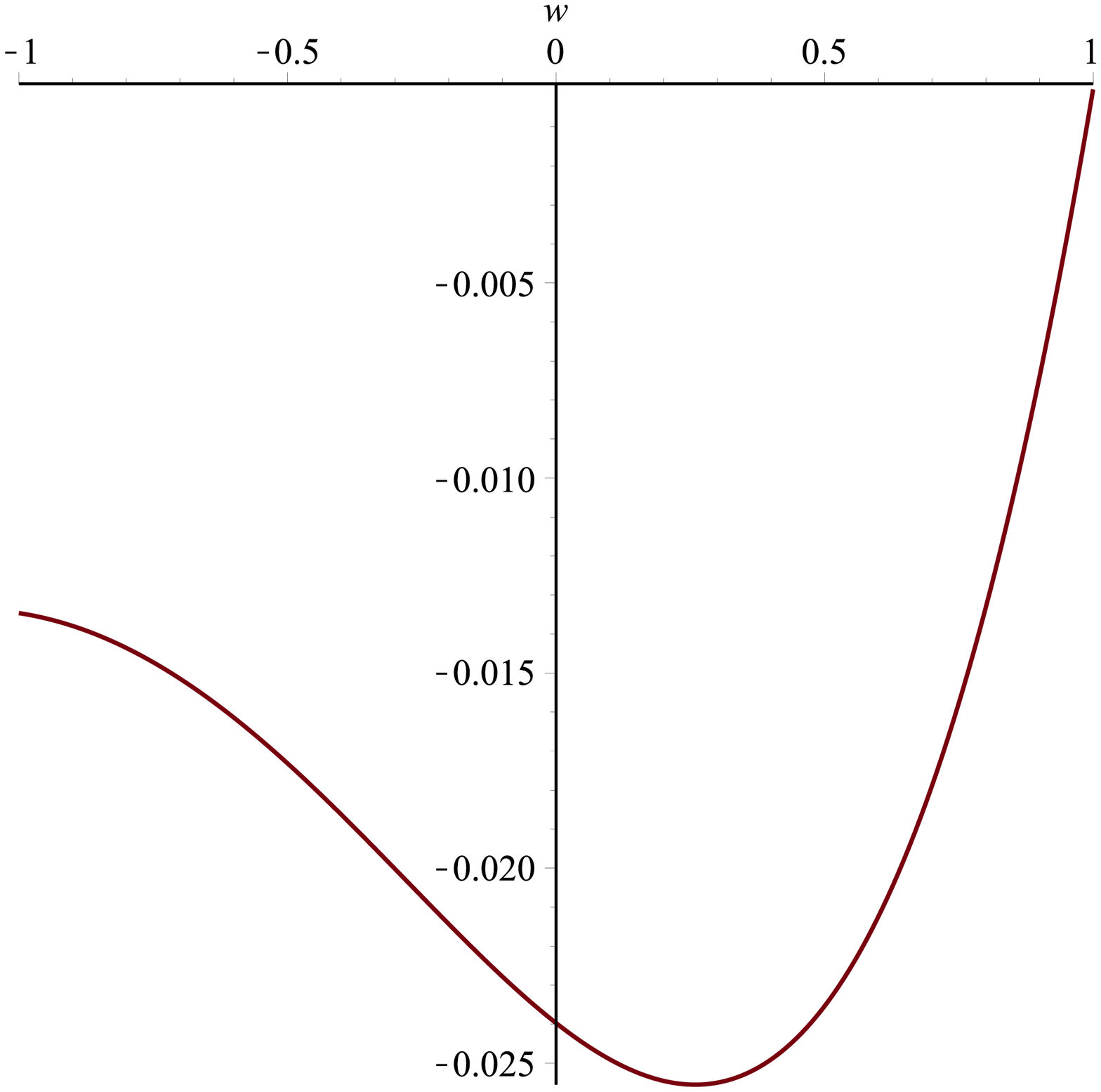}}
	\quad
	\subfigure[$b = 0.002$]{
		\includegraphics[width=4cm,height=5cm]{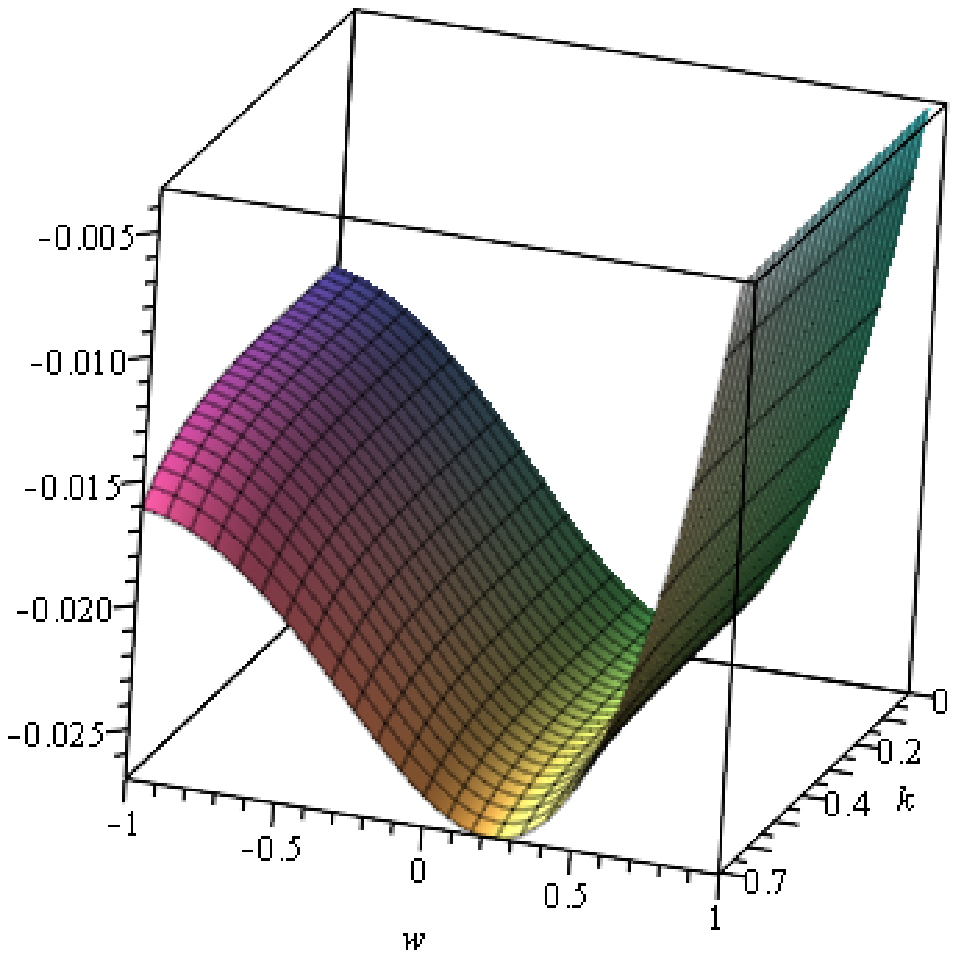}}
	\caption{Graphics of the quantity $I$ for $B>0$ and $L=1$.}
	\label{figura2}
\end{figure}


\begin{figure}[!h]
	\subfigure[$b=0.07 : \omega=0.5$]{
		\includegraphics[width=4cm,height=5cm]{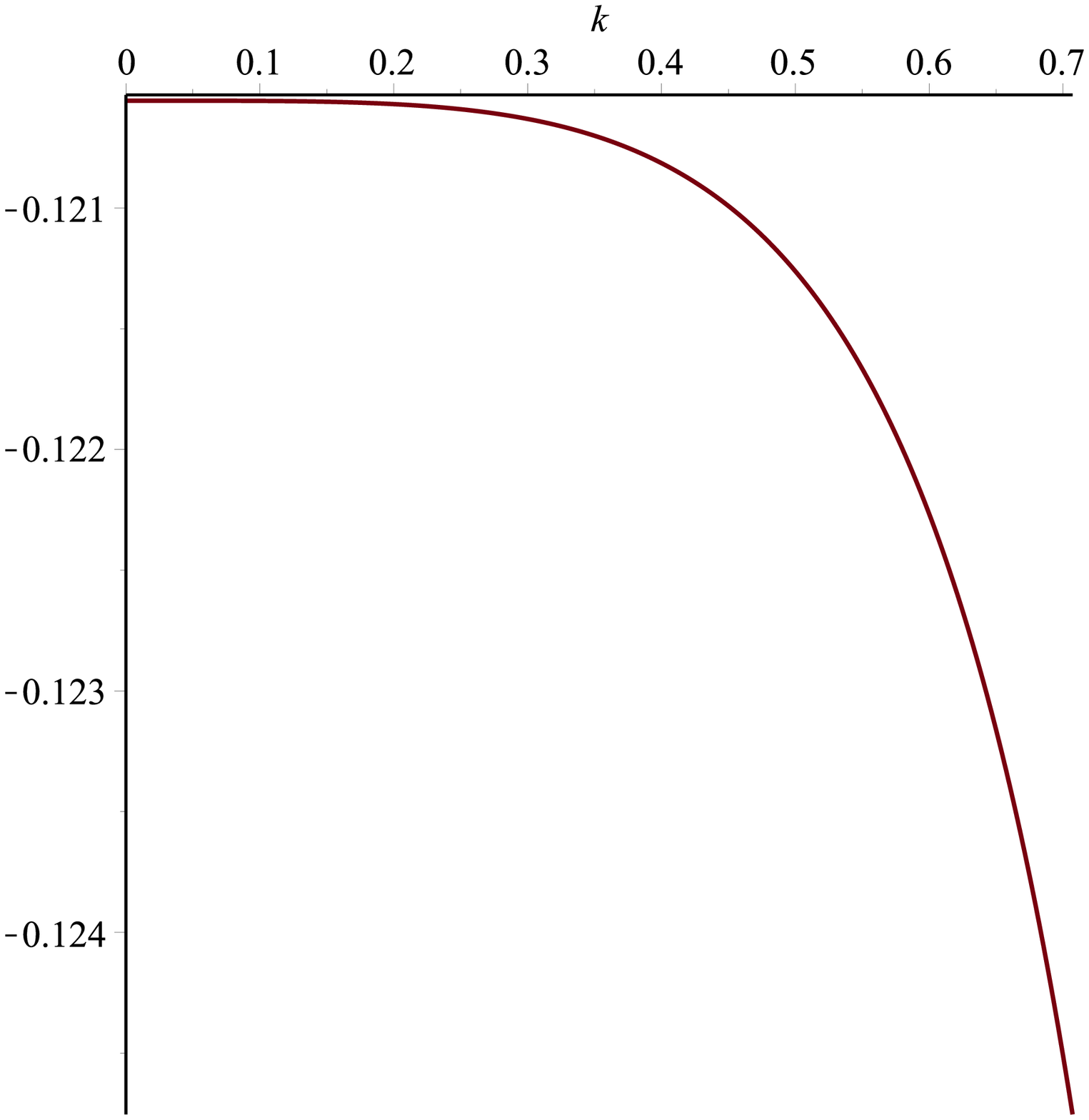}}
	\quad
	\subfigure[$b=0.07 : k = 0.5$]{
		\includegraphics[width=4cm,height=5cm]{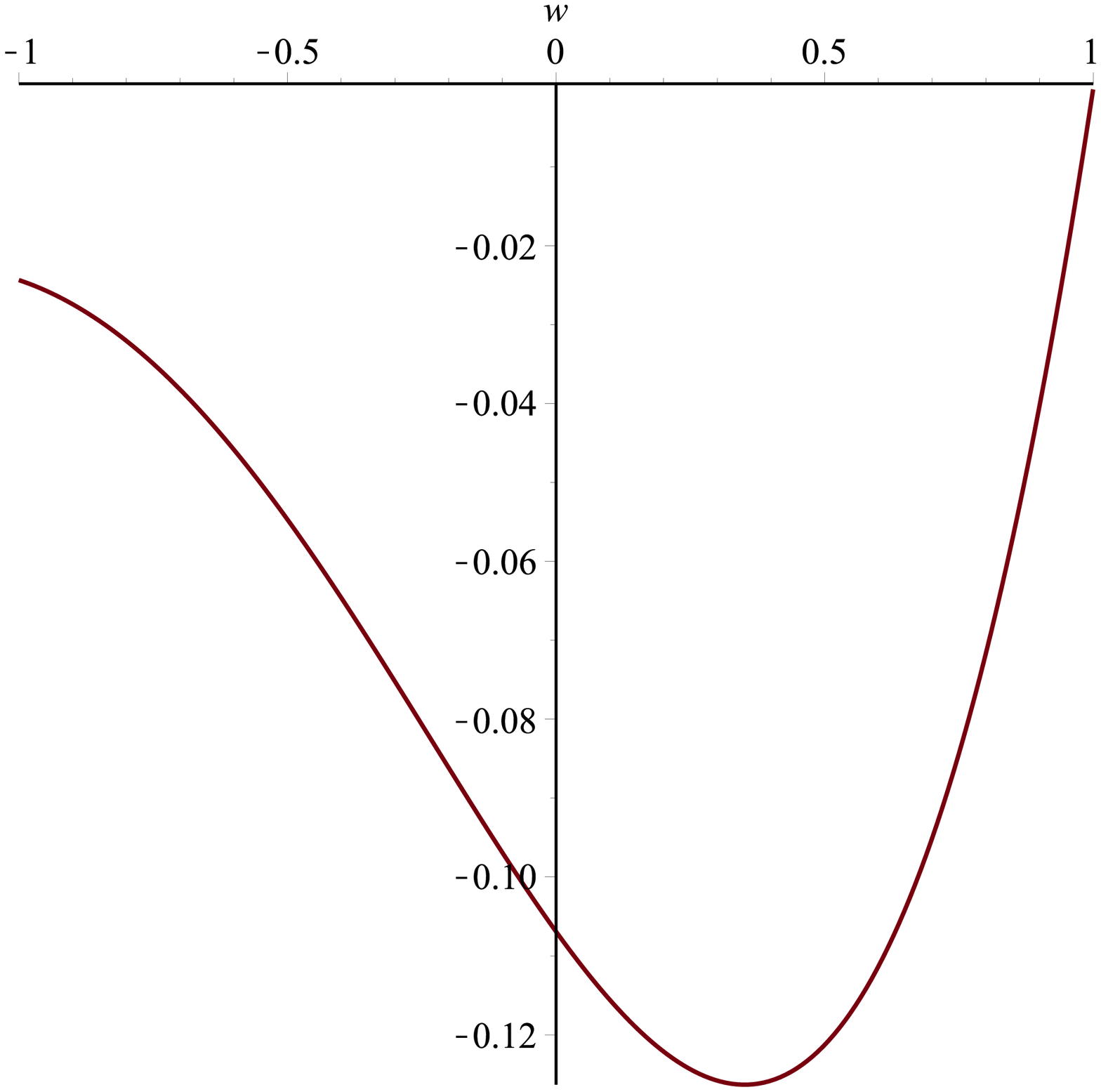}}
	\quad
	\subfigure[$b = 0.07$]{
		\includegraphics[width=4cm,height=5cm]{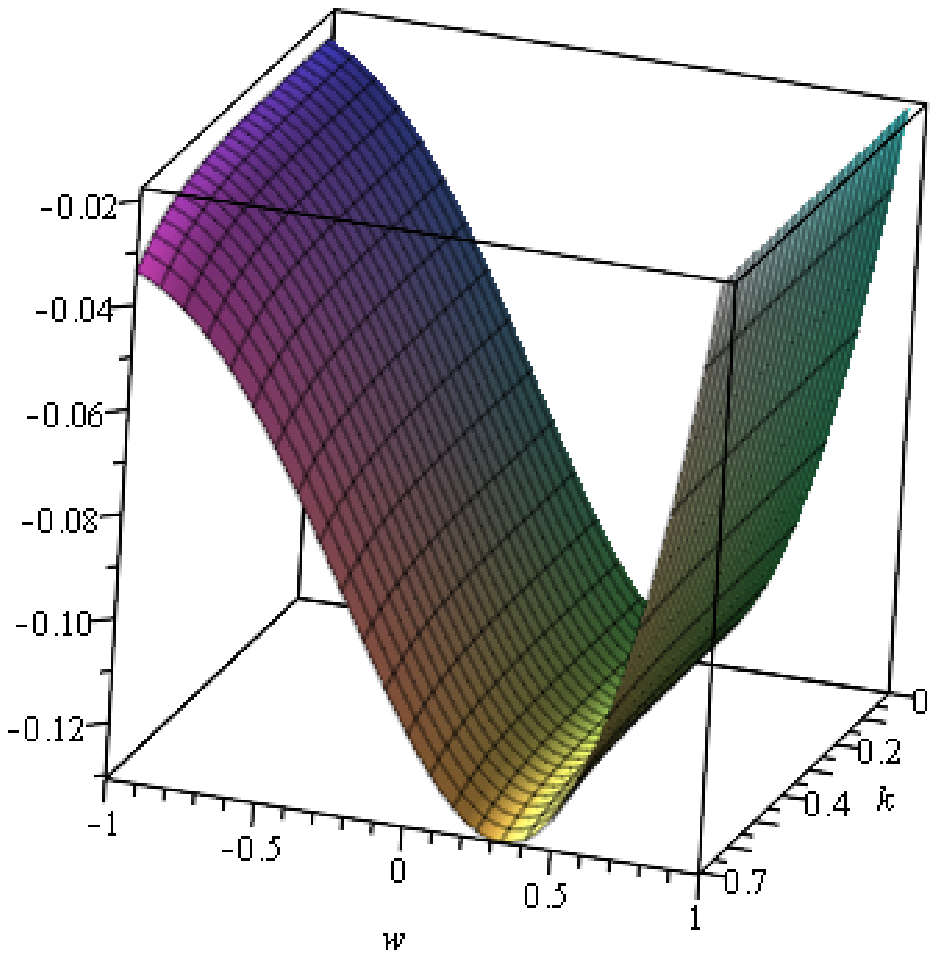}}
	\caption{Graphics of the quantity $I$ for $B>0$ and $L=2\pi$.}
	\label{figura5}
\end{figure}

\begin{figure}[!h]
	\subfigure[$b=4.23 : \omega=-0.5$]{
		\includegraphics[width=4cm,height=5cm]{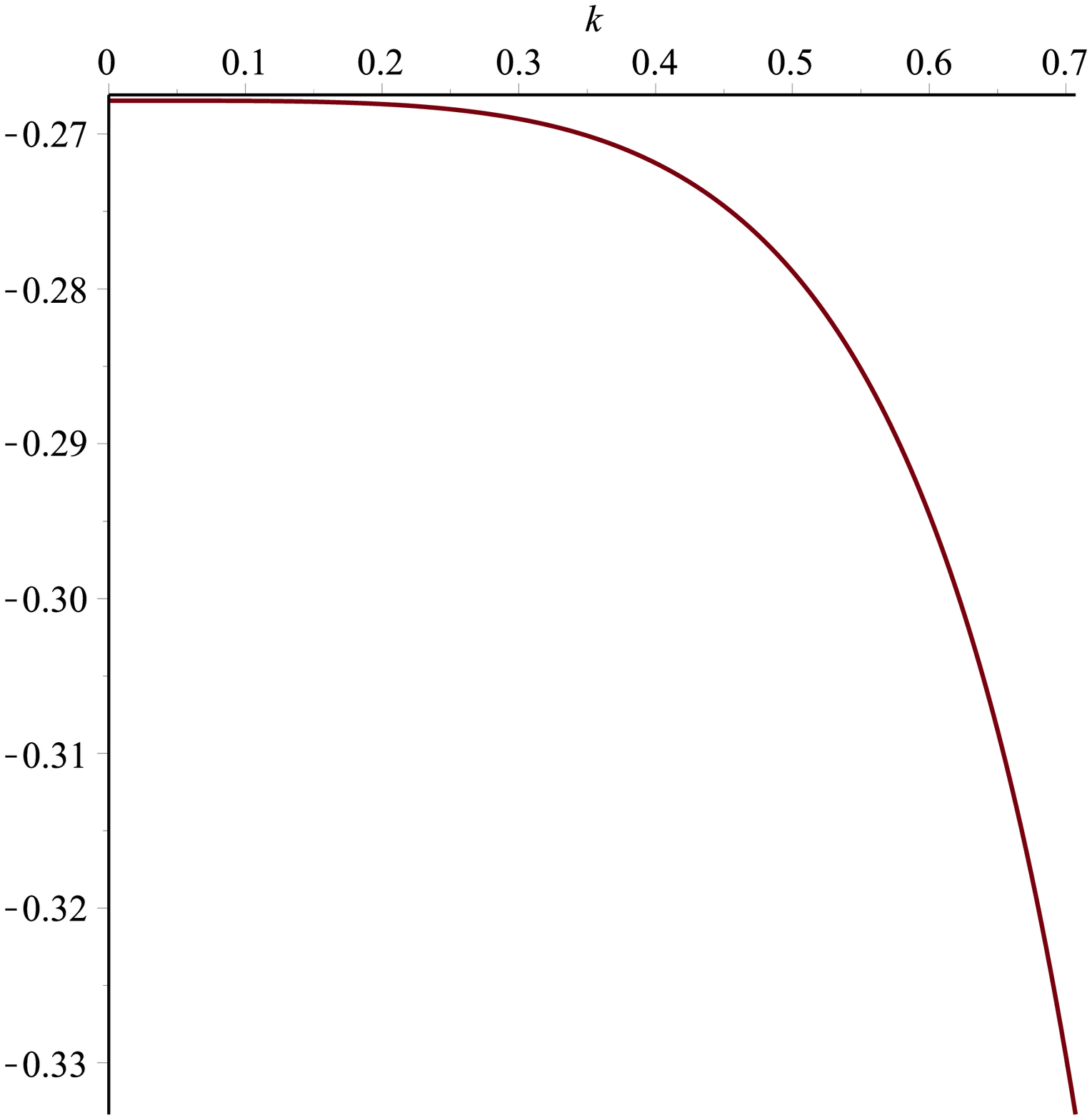}}
	\quad
	\subfigure[$b=4.23 : k = 0.01$]{
		\includegraphics[width=4cm,height=5cm]{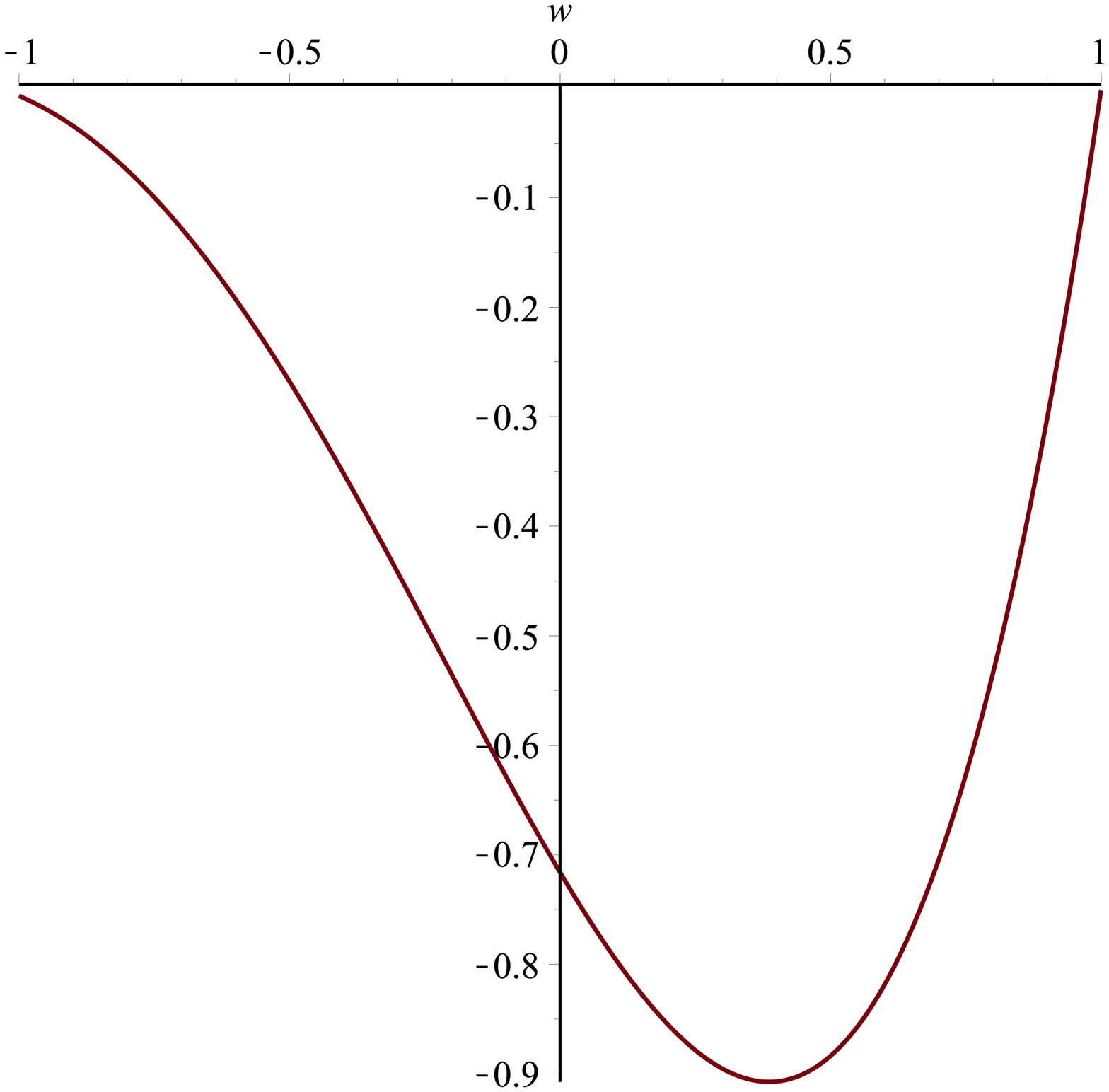}}
	\quad
	\subfigure[$b = 4.23$]{
		\includegraphics[width=4cm,height=5cm]{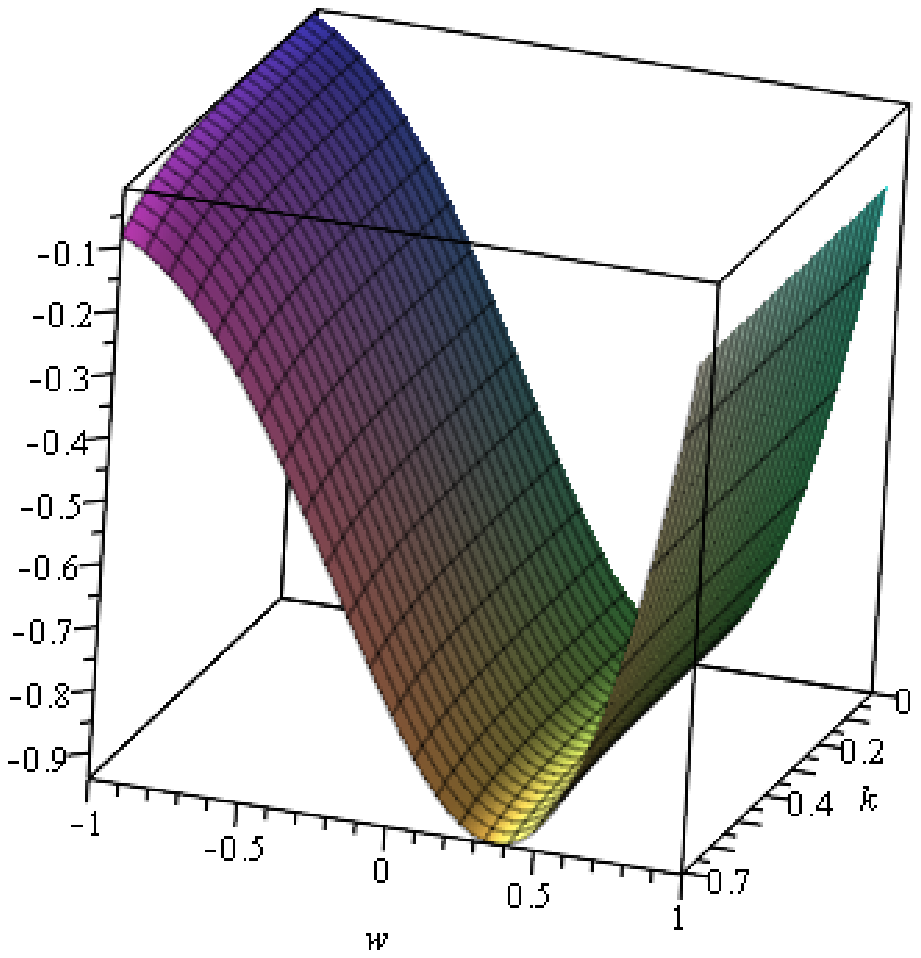}}
	\caption{Graphics of the quantity $I$ for $B>0$ and $L=50$.}
	\label{figura50}
\end{figure}

In the case of $B < 0$, we conclude by analysing the three graphics in Figures \ref{figura5} and \ref{figura13} the existence of a certain symmetry with the case $B>0$, so that there is no interference on the sign of the quantity $I$ in \eqref{quantityIcase1}.

\begin{figure}[!h]
	\subfigure[$b=0.07 : \omega=0.5$]{
		\includegraphics[width=4cm,height=5cm]{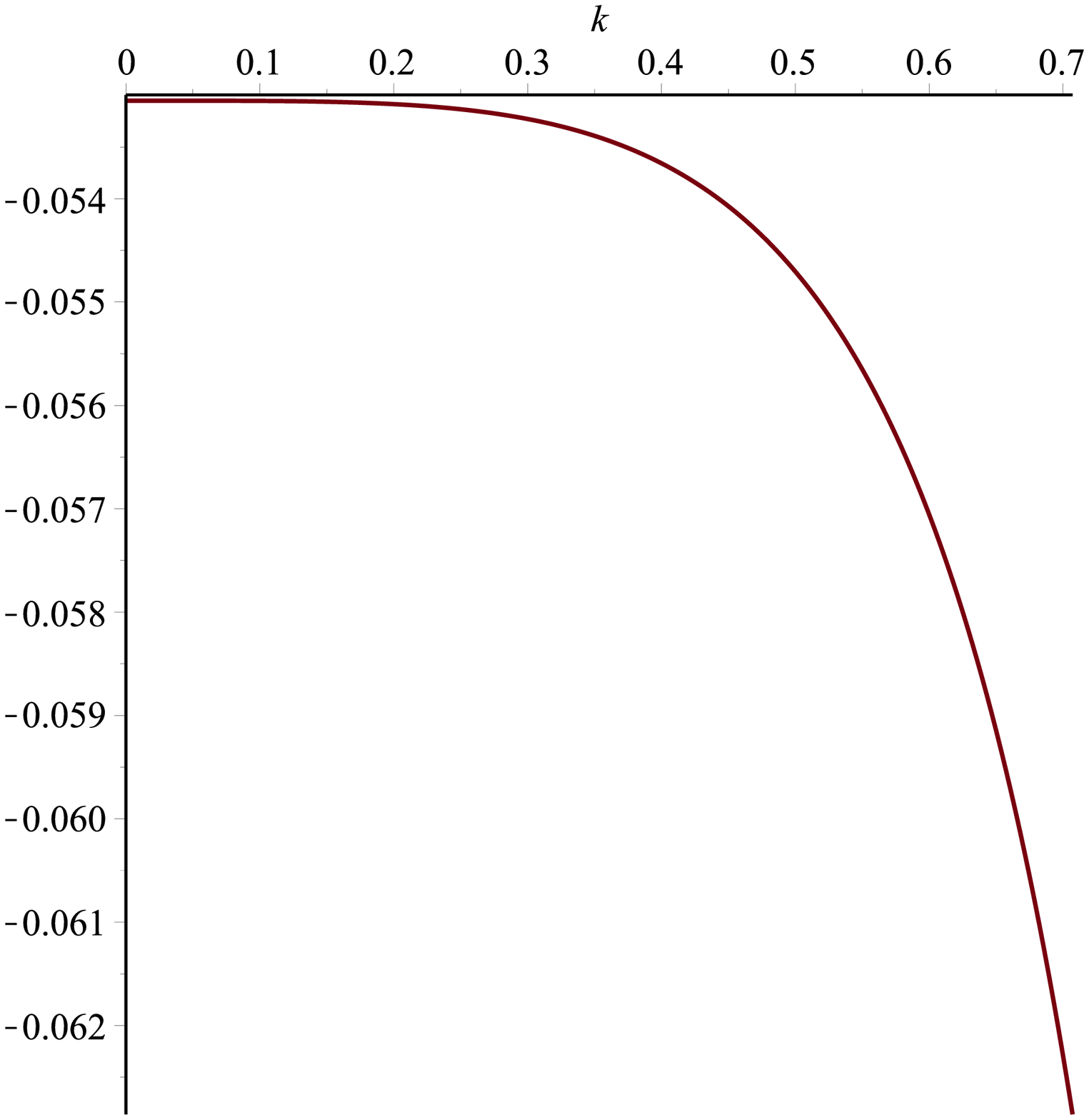}}
	\quad
	\subfigure[$b=0.07 : k = 0.5$]{
		\includegraphics[width=4cm,height=5cm]{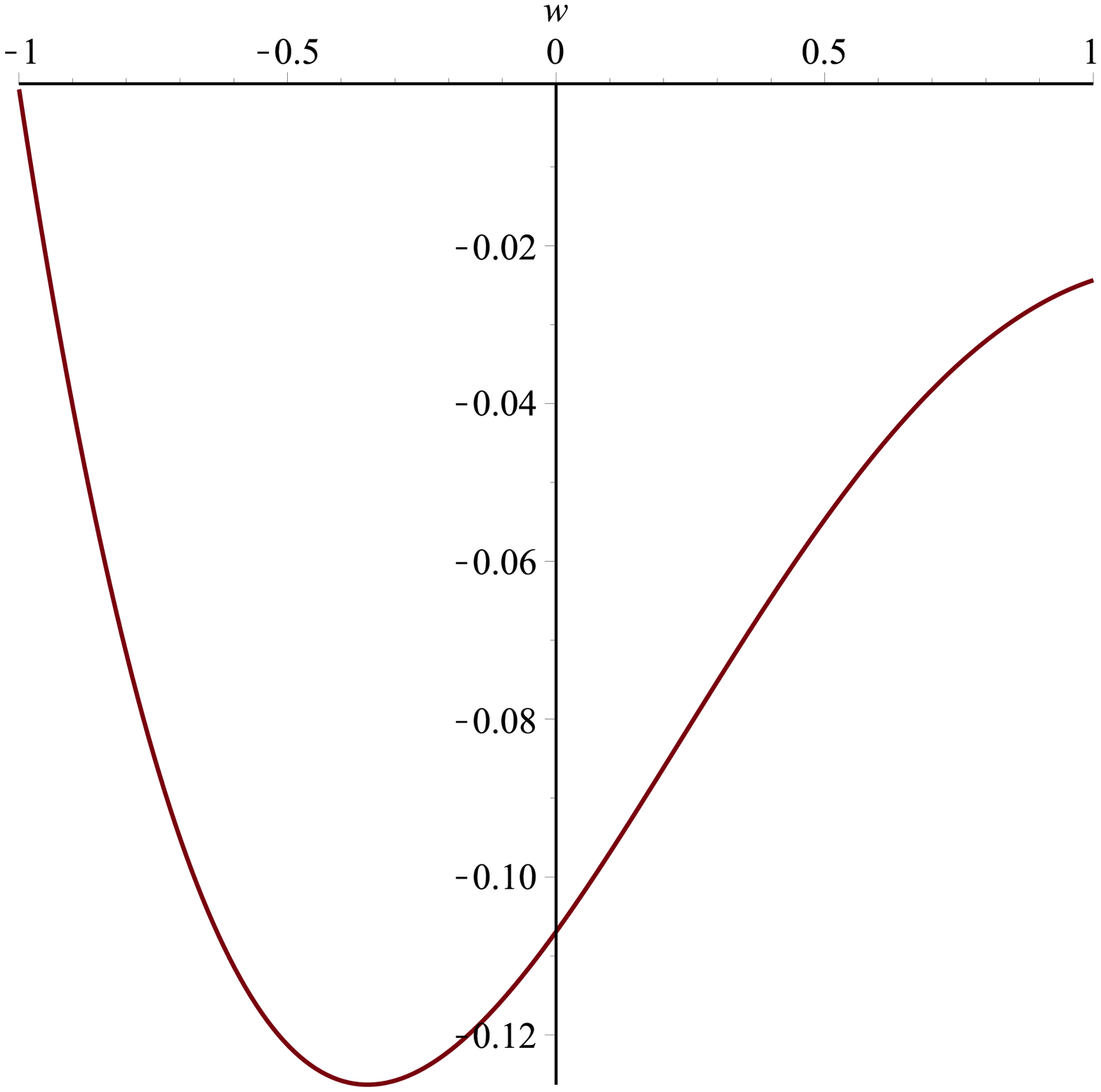}}
	\quad
	\subfigure[$b = 0.07$]{
		\includegraphics[width=4cm,height=5cm]{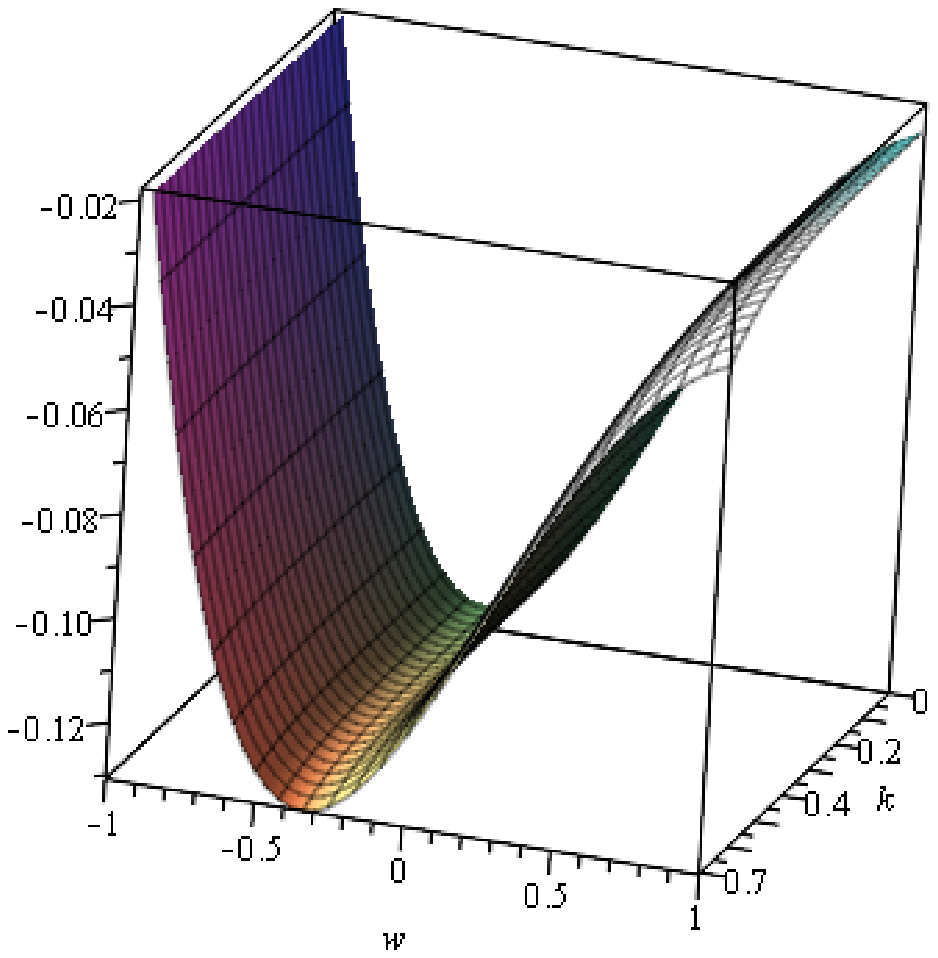}}
	\caption{Graphics of the quantity $I$ for $B<0$ and $L=2\pi$.}
	\label{figura13}
\end{figure}
\begin{flushright}
	$\blacksquare$
\end{flushright}

\subsection{Case 2: $a+b = \tfrac{1}{6}$.} Proof of Theorem $\ref{mainT}$-(ii).\\
\indent Since $A_1=A_2=0$, the orbital stability in this case can be determined directly by the stability theorem in \cite{GrillakisShatahStraussI} (see also \cite[Theorem 1.3]{andrade}). The construction of the periodic wave has been determined in Proposition $\ref{cnoidalcurve2}$ and the spectral properties concerning the linearized operator $\mathcal{L}$ in $(\ref{matrixop})$ for this case was established in Proposition $\ref{simplekernel2}$. Both two facts give assumption (H1) and it remains to establish assumption (H2). In fact, since the pair $(\varphi,B\varphi)$ of cnoidal waves of the form $(\ref{cnoidalsolution})$ are solutions of $(\ref{SystemEDO2})$ only if $\omega=0$, we can not construct a smooth curve of periodic waves depending on the wave speed $\omega$ as in the first case. This fact brings some difficulties in order to obtain $\Phi$ as requested in assumption (H2). To overcome this problem, since $A_1=A_2=0$ we need to consider $Q(\eta, u )= F(\eta, u )$ where $F$ is a conserved quantity defined in \eqref{F}. Also,  since by Proposition \ref{simplekernel2} we have that ${\rm Ker}(\mathcal{L})=[(\varphi', \psi')]$, one has from the fact $\mathcal{L}$ is a self-adjoint operator that ${\rm Ker}(\mathcal{L})^{\perp}={\rm R}(\mathcal{L})$. This fact allows to guarantee the existence of a unique $\Phi$ satisfying $\mathcal{L}\Phi=\begin{pmatrix}
			\psi-b\psi''  \\ 
			\varphi-b\varphi''
		\end{pmatrix}= Q'(\varphi, \psi)$ 
and since $\mathcal{L}$ is a self-adjoint operator with ${\rm Ker}(\mathcal{L}) = [(\varphi',\psi')]$, we obtain that
$
	(\mathcal{L} \Phi, \Psi')_{L_{per}^2\times L_{per}^2} = (\mathcal{L}\, \Phi, (\varphi', \psi'))_{L_{per}^2\times L_{per}^2}=(\Phi, \mathcal{L}(\varphi', \psi'))_{L_{per}^2\times L_{per}^2}=0.
$

\indent On the other hand, using the relation \eqref{STorthogonal}, we can write 
\begin{eqnarray*}
	\big( \mathcal{L}\Phi, \Phi \big)_{L_{per}^2\times L_{per}^2} & = & \big( \big((1-b\partial_x^2)\psi, (1-b\partial_x^2)\varphi\big), \Phi \big)_{L_{per}^2\times L_{per}^2} \nonumber \\
	& = &  \big( \mathcal{L}^{-1}\big((1-b\partial_x^2)\psi, (1-b\partial_x^2)\varphi\big), \big((1-b\partial_x^2)\psi, (1-b\partial_x^2)\varphi \big) \big)_{L_{per}^2\times L_{per}^2} \nonumber \\
	& = & \left( \begin{pmatrix}
		\mathcal{L}_3 & 0 \\ 
		0 & \mathcal{L}_4
	\end{pmatrix}^{-1} (\mathcal{S}T) \begin{pmatrix}
		B \\ 
		1
	\end{pmatrix}(1-b \partial_x^2)\varphi, (\mathcal{S}T)\begin{pmatrix}
		B \\ 
		1
	\end{pmatrix}(1-b\partial_x^2)\varphi \right)_{L_{per}^2\times L_{per}^2}.
\end{eqnarray*}

Since $B =\pm \sqrt{2}$, we have that
\begin{equation}\label{STproduct}
	(\mathcal{S}T)\begin{pmatrix}
		B \\ 
		1
	\end{pmatrix} = \begin{pmatrix}
		2\sqrt{\frac{2}{3}} \\ 
		-\frac{1}{\sqrt{3}}
	\end{pmatrix}.
\end{equation}
where, depending on the value for $B$, the matrix $\mathcal{S}$ is given by \eqref{matrixScase2} ($B = \sqrt{2}$) or by \eqref{matrixScase2-2} ($B=-\sqrt{2}$). In addition, the negative parameters $b_0, b_2$ as in $(\ref{b0-case2-w=0})$ and $(\ref{b2-case2-w=0})$, respectively, does not depend on the sign of $B$. Thus, considering $f_0 = (1-b\partial_x^2)\varphi$, we obtain that the value of $I$ is given by
\begin{equation}\label{QI}
I=\big( \mathcal{L}\Phi, \Phi \big)_{L_{per}^2\times{L_{per}^2}}=\frac{8}{3} (\mathcal{L}_3^{-1}\, f_0, f_0)_{L_{per}^2}+ \frac{1}{3}(\mathcal{L}_4^{-1}\, f_0, f_0)_{L_{per}^2}=\frac{8}{3}\mathcal{J}_1+\frac{1}{3}\mathcal{J}_2.
\end{equation}

%

To calculate the value of $I$ in $(\ref{QI})$, we need to use similar arguments as in \cite[Subsection 4.2.1]{HakkaevStanislavovaStefanov}. Indeed, it is clear that 
$\mathcal{L}_3 \varphi'= a \varphi'''+\varphi'+2\varphi \varphi'.$
Since $A_1=A_2=0$, one has
\begin{equation}\label{QI2}
a\varphi''+\varphi+\varphi^2=0.
\end{equation}
\indent Taking the derivative with respect to $a$ in \eqref{QI2}, we see that $\mathcal{L}_3^{-1}\, \varphi''=- \partial_a\varphi$. From this fact and \eqref{QI2}, we obtain $\mathcal{L}_3\, \varphi= \varphi^2=-a\varphi''-\varphi$ and thus $\mathcal{L}_3^{-1}\, \varphi=a \partial_a \varphi-\varphi$. We have
\begin{equation}\label{QI5}
(\mathcal{L}_3^{-1}\, f_0, f_0)_{L_{per}^2}=(a+b)(\partial_a\varphi, \varphi)_{L_{per}^2}-b(a+b)(\partial_a\varphi, \varphi'')_{L_{per}^2}-(\varphi, \varphi)_{L_{per}^2}+b(\varphi, \varphi'')_{L_{per}^2}.
\end{equation}

We need to handle with all inner products present in $(\ref{QI5})$. In fact, by \eqref{relationa} and the chain rule, it follows that
\begin{eqnarray*}
	 (\partial_a\varphi, \varphi)_{L^2_{per}}&=& \frac{1}{2} \frac{d}{da}\int_0^L \varphi^2(x)\; dx \\
& = & \left(\frac{dk}{da}\right)^{-1} \left[ \frac{L}{2}\frac{d}{dk}(b_0^2)+b_0b_2\frac{d}{dk}(J_1)+J_1 \frac{d}{dk}(b_0b_2) +\frac{b_2^2}{2}\frac{d}{dk}(J_2)+\frac{1}{2} J_2 \frac{d}{dk}(b_2^2)\right],
\end{eqnarray*}
where $J_1, J_2$ are given by \eqref{J1J2J3}. Moreover,
\begin{eqnarray*}
(\partial_a\varphi, \varphi'')_{L_{per}^2}&=&-\frac{1}{2} \frac{d}{da}\int_0^L \varphi'(x)^2\; dx = -\frac{1}{2}\frac{d}{da}\left[ \frac{16\,b_2^2\,{\rm K}(k)^2}{L^2}J_3 \right] \nonumber \\
&=& -\frac{1}{2} \frac{16b_2^2{\rm K}(k)^2}{L^2}\frac{d}{da}(J_3) -\frac{J_3}{2}\frac{d}{da}\left(\frac{16\,b_2^2\,{\rm K}(k)^2}{L^2}\right) \\
&=&  \left( \frac{da}{dk}\right)^{-1} \left[-\frac{1}{2} \frac{16\,b_2^2\,{\rm K}(k)^2}{L^2}\frac{d}{dk}(J_3) -\frac{J_3}{2}\frac{d}{dk}\left(\frac{16\,b_2^2\,{\rm K}(k)^2}{L^2}\right)\right],
\end{eqnarray*}
where $J_3$ is defined by \eqref{J1J2J3}. It remains to calculate the last two terms in the RHS of $(\ref{QI5})$. In fact, we have
\begin{equation*}
(\varphi, \varphi)_{L_{per}^2}= \int_0^L \varphi(x)^2\; dx= b_0^2 L+2b_0b_2 J_1+b_2^2 J_2,
\end{equation*}
and
\begin{equation*}
(\varphi, \varphi'')_{L_{per}^2}= \int_0^L \varphi''(x) \varphi(x) \; dx = -\int_0^L \varphi'(x)^2\; dx = \frac{-16 \, b_2^2 \,{\rm K}(k)^2}{L^2}J_3.
\end{equation*}
 
On the other hand, since $\mathcal{L}_4$ is a positive operator (see Lemma \ref{lemma-L4}) there exists $\alpha_0>0$ such that
$$
(\mathcal{L}_4\, g, g )_{L_{per}^2} \geq\alpha_0 \|g\|_{L_{per}^2}^2,
$$
where $\alpha_0>0$ can be characterized as the first eigenvalue of the operator $\mathcal{L}_4$ by using the min-max principle. Thus,  it follows that
\begin{equation}\label{limitation-L4-alpha0}
(\mathcal{L}_4^{-1}\, f, f )_{L_{per}^2} \leq \frac{1}{\alpha_0} \|f\|_{L_{per}^2}^2,
\end{equation}
for all $f\in L_{per}^2$. Moreover, if $\xi_0 \in {\rm D}(\mathcal{L}_4)$ is the eigenfunction associated to the eigenvalue $\alpha_0>0$, we obtain since $a<0$ and $b_2<0$ that
\begin{align*}
	\alpha_0 \|\xi_0\|^2_{L_{per}^2} &= (\mathcal{L}_4 \xi_0, \xi_0)_{L_{per}^2}  = (a \xi_0'' + \xi_0 - \xi_0 \varphi, \xi_0)_{L_{per}^2} \\
	& = -a\|\xi_0'\|^2_{L_{per}^2} + (1-b_0) \|\xi_0\|^2_{L_{per}^2} - b_2 \int_{0}^{L} {\rm cn}^2 \left(\frac{2 {\rm K}(k)}{L} x,k\right) \xi_0(x)^2 \; dx \\
	& \geq (1-b_0) \|\xi_0\|^2_{L_{per}^2},
\end{align*}
so that $
	\alpha_0 \geq (1- b_0) > 0.
$ Thus, we obtain by \eqref{limitation-L4-alpha0}  
\begin{equation}\label{limitation-L4-b0}
	(\mathcal{L}_4^{-1}\, f, f )_{L_{per}^2} \leq \frac{1}{\alpha_0} \|f\|_{L_{per}^2}^2 \leq \frac{1}{1-b_0} \|f\|_{L_{per}^2}^2.
\end{equation}

Next, by \eqref{QI2} we see that $\mathcal{L}_4\varphi = a \varphi'' + \varphi - \varphi^2 = -2\varphi^2 = 2a\varphi'' + 2 \varphi$ and since $\mathcal{L}_4^{-1}$ is invertible, it follows that $\mathcal{L}_4^{-1}(a \varphi''+\varphi)=\tfrac{\varphi}{2}$. Letting $\varphi-b\varphi''=\tfrac{b}{a}(a\varphi''+\varphi)+\big(1+\tfrac{b}{a}\big)\varphi$, using the notation $f_0 = \varphi-b\varphi''$ and the inequality  \eqref{limitation-L4-b0} for $f=\varphi$, we see that
\begin{equation}\label{QI7}\begin{array}{llll} 
(\mathcal{L}_4^{-1} f_0, f_0)_{L^2_{per}} &=\displaystyle{ \frac{b^2}{2a}(\varphi, \varphi'')_{L_{per}^2}-\left(2-\frac{b}{2a}\right)\frac{b}{2a}\|\varphi\|^2_{L_{per}^2}} +\displaystyle\left(1+\frac{b}{a}\right)^2( \mathcal{L}_4^{-1}\varphi,\varphi)_{L_{per}^2}           \\
   &\leq  \displaystyle   -\frac{b^2}{2a}||\varphi'||_{L_{per}^2}^2+\left(\left(1+\frac{b}{a}\right)^2\left(\frac{1}{1-b_0}\right)-\left(2-\frac{b}{2a}\right)\frac{b}{2a}\right) ||\varphi||_{L_{per}^2}^2.
\end{array}\end{equation}

By considering $\mathcal{J}_3=-\frac{b^2}{2a}||\varphi'||_{L_{per}^2}^2+\left(\left(1+\frac{b}{a}\right)^2\left(\frac{1}{1-b_0}\right)-\left(2-\frac{b}{2a}\right)\frac{b}{2a}\right) ||\varphi||_{L_{per}^2}^2$, we obtain from \eqref{QI}, \eqref{QI5} and \eqref{QI7} that
\begin{eqnarray}\label{quantityIJ}
I=\big( \mathcal{L}\Phi, \Phi \big)_{L_{per}^2\times L_{per}^2}\leq \frac{8}{3}\mathcal{J}_1+\frac{1}{3}\mathcal{J}_3.
\end{eqnarray}

In Figure \ref{figure6}, we put forward some graphics concerning $\frac{8}{3}\mathcal{J}_1+\frac{1}{3}\mathcal{J}_3$ as a function of $k \in (0,1)$ for fixed values of the period $L>0$. We see that $\frac{8}{3}\mathcal{J}_1+\frac{1}{3}\mathcal{J}_3<0$, so that $I<0$ as requested in assumption (H2). Important to mention that the graphics have the behaviour for $B=\pm\sqrt{2}$.

\begin{figure}[!h]
	\subfigure[$L=1$]{
		\includegraphics[width=4.95cm,height=3.6cm]{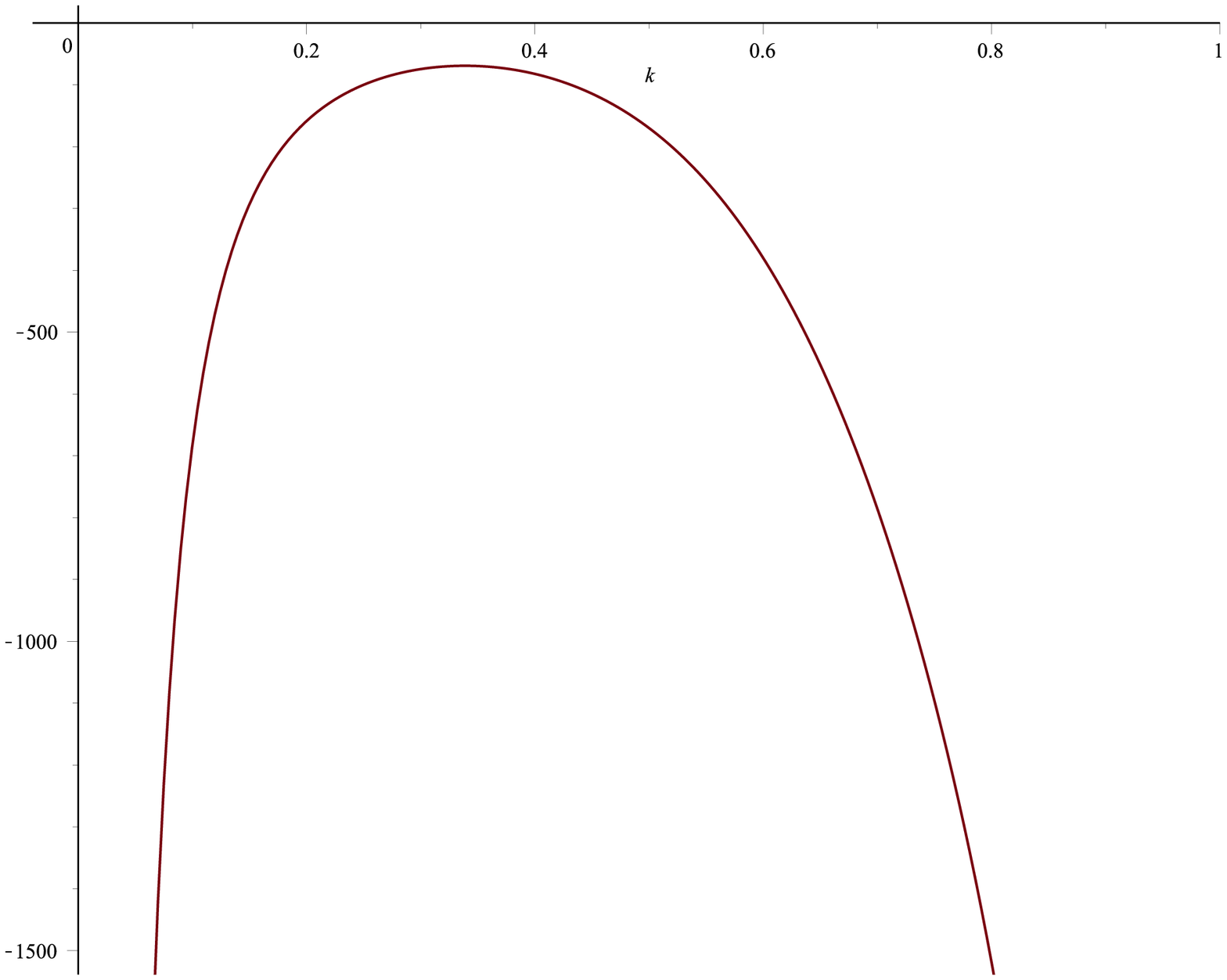}}
	\quad
	\subfigure[$L=2\pi$]{
		\includegraphics[width=4.95cm,height=3.6cm]{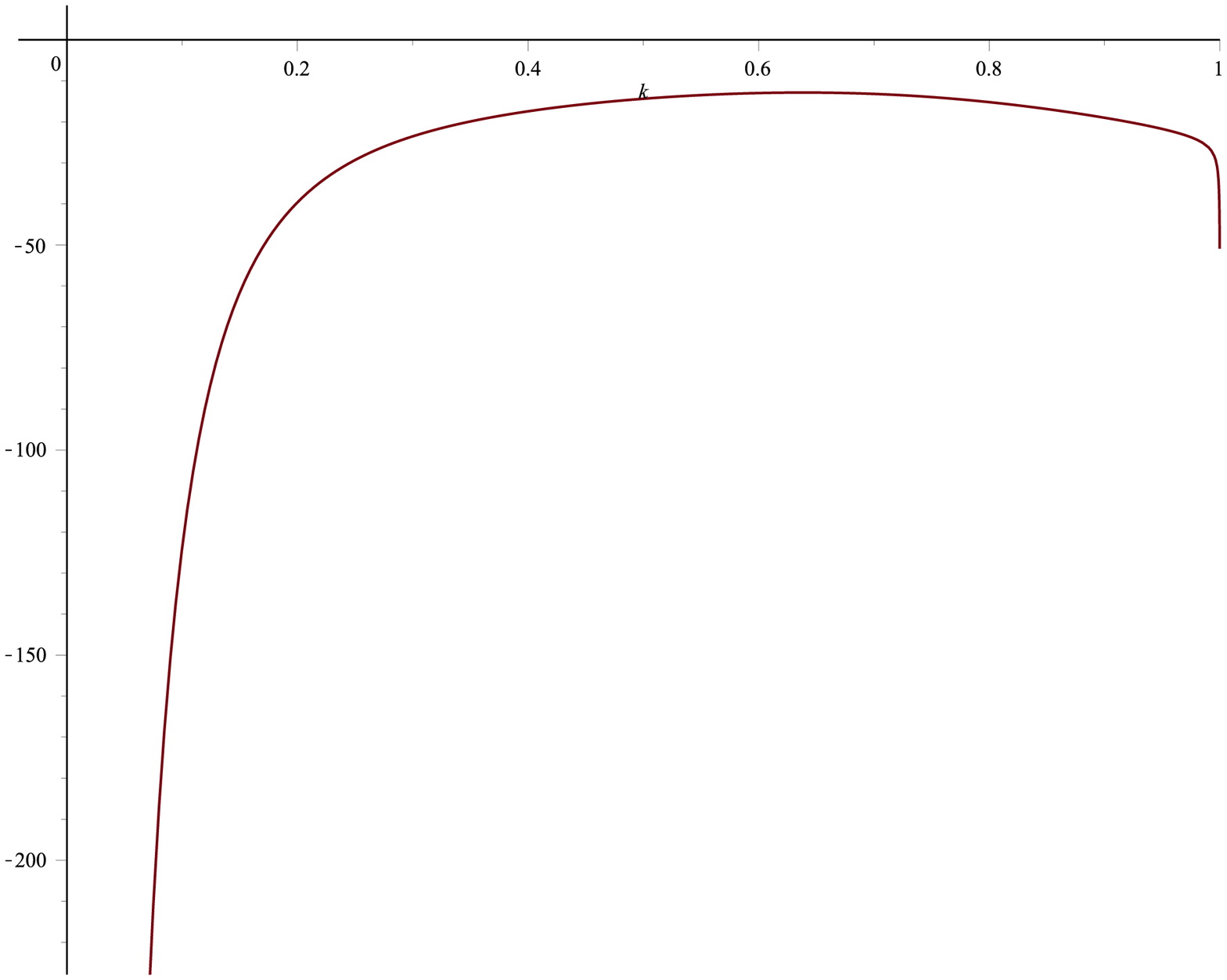}}
	\quad
	\subfigure[$L=50$]{
		\includegraphics[width=4.95cm,height=3.6cm]{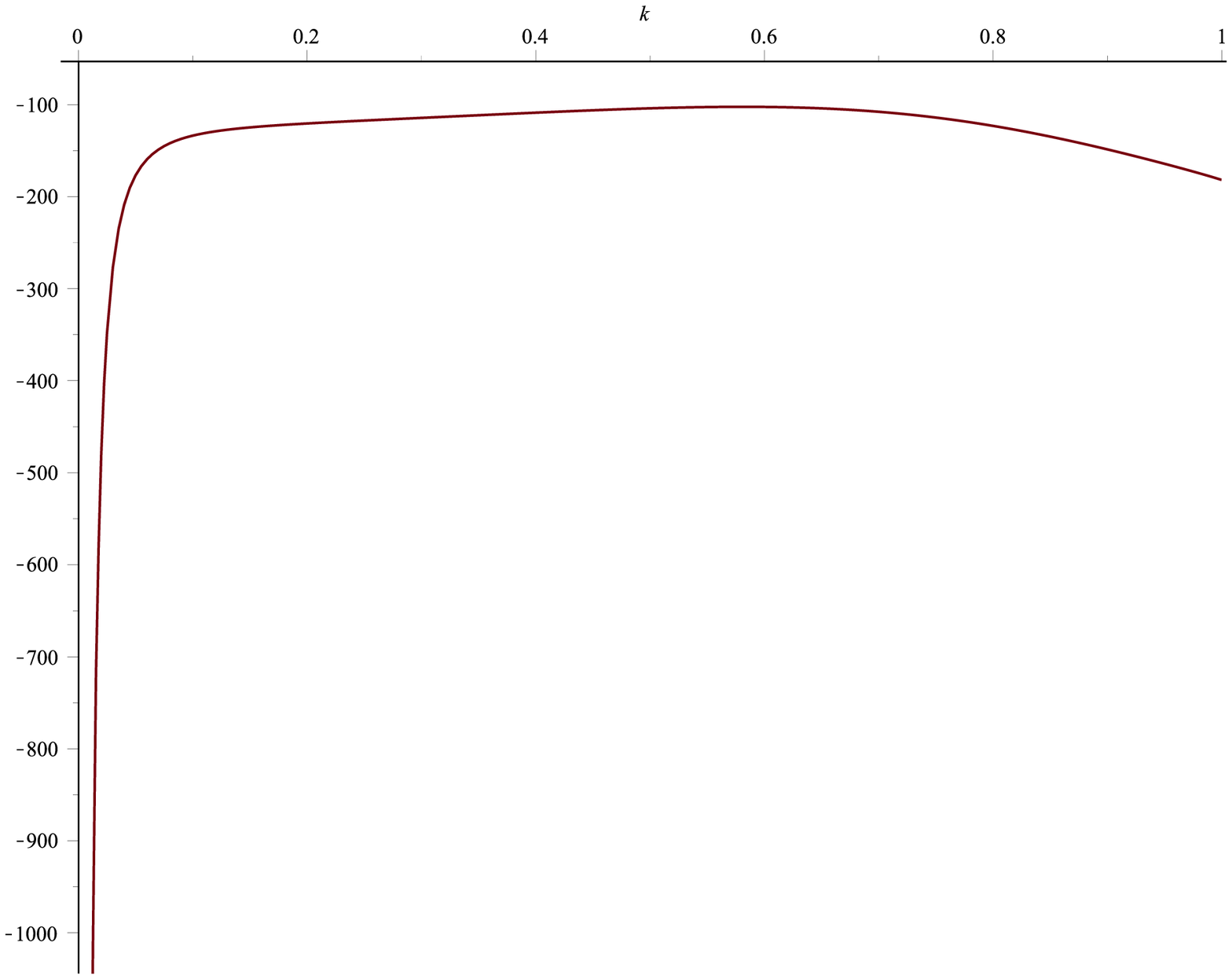}}
	\quad
	\subfigure[$L=100$]{
		\includegraphics[width=4.95cm,height=3.6cm]{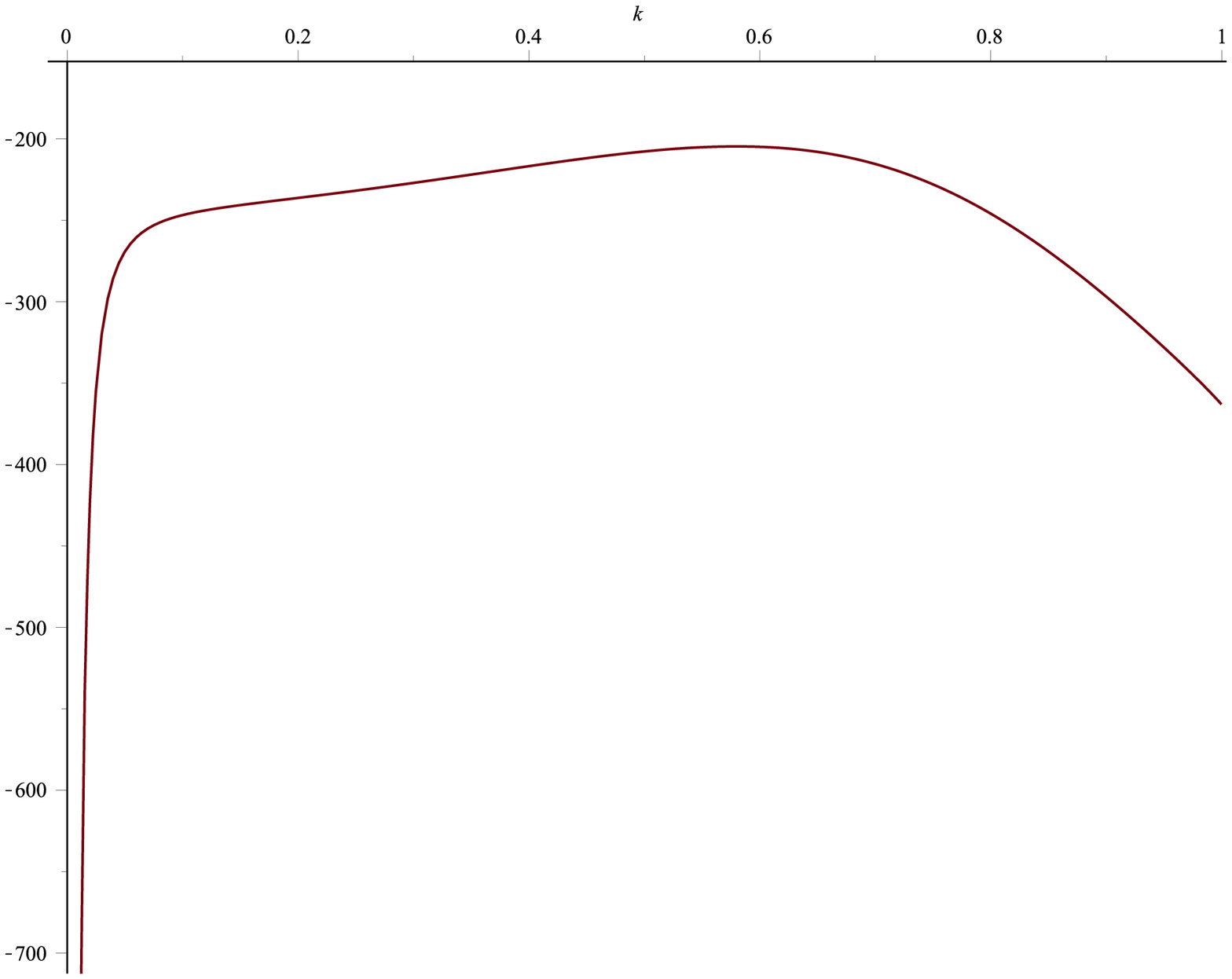}}
	\caption{Graphic of the quantity $\frac{8}{3}\mathcal{J}_1+\frac{1}{3}\mathcal{J}_3$ as a function on $k \in (0, 1)$ 
	  for some fixed values of $L>0$.}
	\label{figure6}
\end{figure}
\begin{flushright}
	$\blacksquare$
\end{flushright}

\newpage

\section*{Acknowledgments}
G. de Loreno and G. E. B. Moraes are supported by Coordenação de Aperfeiçoamento de Pessoal de Nível Superior (CAPES)/Brazil - Finance code 001. F. Natali is partially supported by Funda\c c\~ao Arauc\'aria/Brazil (grant 002/2017), CNPq/Brazil (grant 303907/2021-5) and CAPES MathAmSud (grant 88881.520205/2020-01).

\end{document}